\numberwithin{equation}{section}
\def\e{\epsilon}
\def\R{\mathbb{R}}
\def\T{\mathbb{T}}
\def\cD{\mathcal{D}}
\def\cI{\mathcal{I}}
\def\loc{\mathrm{loc}}
\def\pc{\bar{*}}
\newtheorem{theorem}{Theorem}[section]
\newtheorem{lemma}{Lemma}[section]
\newtheorem{proposition}{Proposition}[section]
\newtheorem{remark}{Remark}[section]
\newtheorem{corollary}{Corollary}[section]
\newtheorem{definition}{Definition}[section]
\newtheorem{assumption}{Assumption}[section]
\title{Some Gr\"onwall inequalities for a class of discretizations of time fractional equations on nonuniform meshes}
\author[a]{Yuanyuan Feng\thanks{
E-mail: yyfeng@math.ecnu.edu.cn}}
\author[b]{Lei Li\thanks{E-mail: leili2010@sjtu.edu.cn}}
\author[c]{Jian-Guo Liu\thanks{E-mail: jliu@math.duke.edu}}
\author[d]{Tao Tang\thanks{E-mail:  ttang@uic.edu.cn}}
\affil[a]{School of  Mathematical Sciences, Key Laboratory of MEA(Ministry of Education) \& Shanghai Key Laboratory of PMMP, East China Normal University, Shanghai, 200241, China. }
\affil[b]{School of Mathematical Sciences, Institute of Natural Sciences, MOE-LSC, Shanghai Jiao Tong University, Shanghai, 200240, China.}
\affil[c]{Department of Mathematics, Department of Physics, Duke University, Durham, NC 27708, USA.}
\affil[d]{Division of Science and Technology, BNU-HKBU United International College, Zhuhai, 519087, China}
\date{}
\begin{document}

\maketitle

\begin{abstract}
We consider the completely positive discretizations of fractional ordinary differential equations (FODEs) on nonuniform meshes. Making use of the resolvents for nonuniform meshes, we first establish comparison principles for the discretizations. Then we prove some discrete Gr\"onwall inequalities using the comparison principles and careful analysis of the solutions to the time continuous FODEs. Our results do not have any restrictions on the step size ratio. The Gr\"onwall inequalities for dissipative equations can be used to obtain the uniform-in-time error control and decay estimates of the numerical solutions.  The Gr\"onwall inequalities are then applied to subdiffusion problems and the time fractional Allen-Cahn equations for illustration.
\end{abstract}

\section{Introduction}

The time fractional differential equations with Caputo derivatives  \cite{caputo1967linear,diethelm10}  have been widely used to model the power law memory effects of energy dissipation for some anelastic materials, and soon became a useful modeling tool  in engineering and physical sciences to construct physical models for nonlocal interactions in time (see \cite{zaslavsky2002chaos}). The models with Caputo derivatives may also result from the complexity reduction and the generalized Langevin dynamics \cite{kouxie04,li2017fractional}. The Caputo derivatives are more suitable for studying the initial value problems as it removes the singularity in the Riemann-Liouville derivatives \cite{liliu18frac}. 
In analyzing these models, especially the a priori energy estimates for time fractional PDEs, it is crucial to have the comparison principles and Gr\"onwall inequalities for the time fractional ordinary differential equations (FODEs) that take values in $\R$.

The FODE taking values in $\R$ with Caputo derivative of order $\alpha\in (0, 1)$ can be written as
\begin{gather}\label{eq:fracode}
D_c^{\alpha}u=f(t, u), \quad u(0)=u_0,
\end{gather}
where $f: [0,\infty) \times \R\to \R$ is assumed to be locally Lipschitz and $u: [0,T)\to \R$ for some $T>0$ is the unknown function. 
Here, the Caputo derivative is defined by
\begin{gather}\label{eq:captra}
D_c^{\alpha}u=\frac{1}{\Gamma(1-\alpha)}\int_0^t\frac{u'(s)}{(t-s)^{\alpha}}ds,
\end{gather}
where $\Gamma(\cdot)$ is the gamma function.
Often the comparison principles may involve functions $u$ that are continuous but not absolutely continuous  so the generalized definitions of Caputo derivatives in \cite{liliu18frac,liliu2018compact} might be considered in these cases. One may refer to section \ref{subsec:capintro} for more details.  The FODE \eqref{eq:fracode} is equivalent to the integral equation (see \cite{diethelm10} and also \cite{liliu18frac,liliu2018compact} for generalized versions)
\begin{gather}\label{eq:fracint}
u(t)=u_0+\frac{1}{\Gamma(\alpha)}\int_0^{t}(t-s)^{\alpha-1}f(s,u(s))ds.
\end{gather}
In other words, the FODE is equivalent to a Volterra equation with the Abel integral kernel
\begin{gather}\label{eq:kernelfode}
g_{\alpha}(t):=\frac{1}{\Gamma(\alpha)}t_+^{\alpha-1},
\end{gather}
where $t_+=t\theta(t)$ and $\theta(t)=1_{t\ge 0}$ is the standard Heaviside step function. 
The kernel $g_{\alpha}$ is known to be completely monotone \cite{widder41,ssv12}, and thus log-convex. A function $a: (0,\infty)\to \R$ is said to be completely monotone if $(-1)^n a^{(n)}(t)\ge 0$ for $n=0,1,\cdots$. A nonzero completely monotone function is strictly positive by the Bernstein theorem \cite{widder41,ssv12}.
Using the integral formulation, it is clear that as $\alpha \equiv 1$, \eqref{eq:fracode} reduces to the usual ODE. 

The first type of comparison principles is for two solution curves of \eqref{eq:fracode} or \eqref{eq:fracint}.  If the initial value of one solution is smaller, the solution is always smaller. Such results are well-established and one may refer to \cite{fllx18note,feng2023class} for examples. A more useful type of comparison principles is for inequalities, which can give estimates to some energy functionals and norms for the solutions. Such results using the differential inequalities are actually also well-known \cite{ramirez2012generalized,fllx18note} but the proofs there are not easy to generalize to numerical schemes. For uniform meshes, the comparison principles for the differential form have been established in \cite{li2019discretization} and \cite{li2021complete}. The proof there, however, heavily relies on the discretization and is intrinsically different from the proof for the time continuous version in \cite{ramirez2012generalized,fllx18note}.  It is thus desired that the proof can be motivated from the analysis for the time continuous equations.  Besides, due to the weak singularity in the memory kernels, the FODE models often exhibit multi-scale behaviors and the solution has singulairty at $t=0$ \cite{cuesta2006convolution,stynes2017error,tang2019energy}. The adaptive time-stepping is often adopted to address this issue \cite{mclean1996discretization,liao2019discrete,stynes2017error,li2019linearized,
 lyu2022symmetric}. Hence, it is desired to establish the comparison principles for the variable step discretizations.

Establishing explicit bounds for the discrete time fractional inequalities, or the discrete Gr\"onwall inequalities, is of great significance to get a priori bounds and to prove stability of numerical schemes. The Gr\"onwall inequalites with linear function $f(u)=\beta u+c$ may be established using the comparison principles because the solutions to the FODEs with linear $f(\cdot)$ are explicitly  known.  There are several results about the discrete Gr\"onwall inequalities in literature \cite{feng2018continuous,liao2018sharp,liao2019discrete}.  The Gr\"onwall inequality in \cite{feng2018continuous} is restricted to uniform meshes. The ones in \cite{liao2018sharp} are based on the special form of L1 scheme so they only apply to L1 discretizations. The ones in \cite{liao2019discrete} can apply to a broader class but there is step ratio restriction and are not directly applicable to estimate the decaying rate of solutions. Often the error control for dissipative systems like the subdiffusion problems rely on the maximum principles \cite{stynes2017error} so it cannot get the decay bounds for the numerical solutions.

In this work, we aim to establish the comparison principles for completely positive discretizations on nonuniform meshes (detailed in section \ref{sec:mondis}). There is no monotonicity assumption on the function $f$. Our approach is to discover a new proof for the comparison principles for time continuous FODEs based on the resolvents \cite{clement1981asymptotic,miller1968volterra}, using similar techniques as in \cite{fllx18note,fengli2023a}. With the so-called pseudo-convolution (see the details in section \ref{subsec:nonunidis}), one can define the resolvent kernels for nonuniform meshes as well. Then, we can establish the comparison principles for the variable-step discretizations using the resolvent kernels by generalizing the argument for the continuous case. This is our new approach.  Based on the comparison principles, we establish Gr\"onwall inequalities for the discretizations making use of some key properties of the solutions to continuous equations. The main results can be summarized as below.
\begin{theorem}[Informal version of Theorem \ref{thm:unibound} and \ref{thm:decayest}]
Consider the differential form \eqref{eq:diffdis} that is completely positive and assume that $c_{n-j}^n$ is comparible to the average of $g_{1-\alpha}$ on the $j$th interval in some sense (see the corresponding sections for the details). 
If a nonnegative sequence $v_n$ satisfies that $\cD_{\tau}^{\alpha}v_n\le -\lambda v_n+c$ for $\lambda>0$, then for some constants $\nu$ and $\tilde{\sigma}$, it holds that
\begin{enumerate}[(a)]
\item $v_n \le v_0 E_{\alpha}(-\nu^{-1}\lambda t_n^{\alpha})+(c/\lambda)(1-E_{\alpha}(-\nu^{-1}\lambda t_n^{\alpha}))$ if $v_0 \le c/\lambda$;
\item $v_n\le (v_0-c/\lambda)E_{\alpha}(-\tilde{\sigma}\lambda t_n^{\alpha})+c/\lambda$ if $v_0>c/\lambda$.
\end{enumerate}
\end{theorem}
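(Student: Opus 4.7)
The plan is to reduce the inequality $\cD_\tau^\alpha v_n \le -\lambda v_n + c$ to a linear homogeneous discrete problem by a shift, apply the comparison principle established in the preceding sections, and then estimate the resulting discrete Mittag-Leffler kernel by its continuous counterpart. For part (b), with $v_0 > c/\lambda$, set $w_n := v_n - c/\lambda$ so $w_0 > 0$ and $\cD_\tau^\alpha w_n \le -\lambda w_n$; the comparison principle gives $w_n \le z_n$, where $z_n$ solves the linear scheme $\cD_\tau^\alpha z_n = -\lambda z_n$ with $z_0 = w_0$, and it suffices to prove the upper bound $z_n \le z_0 E_\alpha(-\tilde\sigma \lambda t_n^\alpha)$.

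Part (a) makes essential use of the nonnegativity of $v_n$. I would set $w_n := c/\lambda - v_n \in [0, c/\lambda]$, which satisfies the reversed inequality $\cD_\tau^\alpha w_n \ge -\lambda w_n$ with $w_0 = c/\lambda - v_0 \ge 0$. Applying the comparison principle in the reverse direction yields $w_n \ge z_n$ for the same linear homogeneous problem with initial datum $w_0$, and the target reduces to the complementary lower bound $z_n \ge z_0 E_\alpha(-\nu^{-1} \lambda t_n^\alpha)$. A simple algebraic rearrangement then gives $v_n = c/\lambda - w_n \le c/\lambda - z_n \le v_0 E_\alpha(-\nu^{-1}\lambda t_n^\alpha) + (c/\lambda)(1-E_\alpha(-\nu^{-1}\lambda t_n^\alpha))$.

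The heart of the proof is therefore the two-sided comparison of the discrete Mittag-Leffler kernel generated by $\cD_\tau^\alpha + \lambda\,\mathrm{Id}$ on a nonuniform mesh with the continuous function $E_\alpha(-\mu \lambda t^\alpha)$. I would represent $z_n$ through the resolvent associated with $\{c_{n-j}^n\}$ under the pseudo-convolution of section \ref{subsec:nonunidis}, which is the discrete analog of the Laplace identity $s^{\alpha-1}/(s^\alpha + \lambda)$ for $E_\alpha(-\lambda t^\alpha)$. The comparability between $c_{n-j}^n$ and the interval average of $g_{1-\alpha}$ is precisely what transfers two-sided bounds from the continuous kernel to the discrete one, producing $\tilde\sigma$ from the upper inequality on the coefficients and $\nu^{-1}$ from the lower one.

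The main obstacle will be obtaining these two-sided Mittag-Leffler bounds on meshes with no step-ratio restriction, since complete positivity alone only ensures monotonicity (not sharp decay) of the discrete resolvent. I expect to need additional log-convexity / complete-monotonicity structure on the discrete resolvent, inherited from the corresponding property of $g_\alpha$, together with a cell-by-cell matching of the discrete resolvent identity against its continuous counterpart and a Gr\"onwall-type bootstrap to close the estimate globally. Once this sandwich of discrete and continuous Mittag-Leffler kernels is in place, both (a) and (b) follow from the shifted comparison arguments above.
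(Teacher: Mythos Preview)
Your reduction by the shift $w_n = v_n - c/\lambda$ (or its negative) is fine and is essentially what the paper does implicitly, since the auxiliary continuous solutions it uses are exactly $c/\lambda$ plus a multiple of $E_\alpha(-\mu\lambda t^\alpha)$. One small slip: in part (a) you write $w_n := c/\lambda - v_n \in [0, c/\lambda]$, but you do not yet know $v_n \le c/\lambda$; fortunately the comparison principle for $f(u)=-\lambda u$ does not need $w_n\ge 0$, so this is harmless.

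The substantive gap is in the ``heart of the proof.'' You propose to analyse the discrete resolvent of $\cD_\tau^\alpha + \lambda\,\mathrm{Id}$ and sandwich it against the continuous Mittag--Leffler kernel via log-convexity, cell-by-cell matching, and a bootstrap. That programme is vague and, on a general nonuniform mesh with no step-ratio condition, it is not clear it can be carried out; complete positivity gives nonnegativity and summability of the discrete resolvent, not pointwise two-sided control by $E_\alpha(-\mu\lambda t_n^\alpha)$.

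The paper avoids this entirely by a supersolution trick: instead of bounding the discrete solution $z_n$ by a continuous function, it shows that the \emph{continuous} solution with a rescaled coefficient is itself a discrete super- (or sub-) solution, and then invokes the discrete comparison principle directly. Concretely, set $y^\mu(t)=(v_0-c/\lambda)E_\alpha(-\mu^{-1}\lambda t^\alpha)+c/\lambda$ for an appropriate $\mu$ and show $\cD_\tau^\alpha y^\mu(t_n)\ge -\lambda y^\mu(t_n)+c$. For case (a) ($v_0\le c/\lambda$) the function $y^\nu$ is nondecreasing and concave; the one-line Lemma~\ref{lmm:concavecomp} (Chebyshev's sorting inequality applied on each cell) gives $\cD_\tau^\alpha y^\nu(t_n)\ge \nu\, D^\alpha y^\nu(t_n)$, which immediately yields the supersolution inequality with $\mu=\nu$. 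For case (b) ($v_0>c/\lambda$) the function $y^\rho$ is nonincreasing and convex, so the reverse inequality $\cD_\tau^\alpha y^\rho(t_n)\ge \rho\, D^\alpha y^\rho(t_n)$ is needed; this is the content of Lemmas~\ref{lmm:firstcontrolrho1}--\ref{lmm:upperbound}, which exploit the complete monotonicity of $t\mapsto -\tfrac{d}{dt}E_\alpha(-\lambda t^\alpha)$ and a bound of the form $\tau_j^{-1}\int_{t_{j-1}}^{t_j} w \le \sigma\, w(t_j)$, and produce the constant $\tilde\sigma$. No analysis of the discrete resolvent or of $z_n$ is required.

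In short, your reduction is equivalent to the paper's, but your plan for the core estimate points in the wrong direction. The key idea you are missing is to use the continuous Mittag--Leffler function (with rescaled $\lambda$) as the barrier in the \emph{discrete} comparison principle, and to verify the barrier property via the monotonicity/convexity of $E_\alpha(-\lambda t^\alpha)$ together with the assumed comparability of $c_{n-j}^n$ to the cell averages of $g_{1-\alpha}$.
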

For the two cases, the final statements are similar. We divide them into $v_0\le c/\lambda$ and $v_0>c/\lambda$ because the proofs rely on different properties of the solutions to the continuous equations. The result in (a) is useful for the uniform bound estimate of the numerical solutions while the result in (b) is useful to get the decay rate of the solutions. One may obtain the decay estimates of norms of the numerical solutions and the uniform-in-time error estimates for dissipative systems based on these results.
Another result is the following.
\begin{theorem}[Informal version of Theorem \ref{thm:grongrow}]
Consider the differential form \eqref{eq:diffdis}  that is completely positive. Assume that $c_{n-j}^n$ is comparible to the average of $g_{1-\alpha}$ on the $j$th interval and the stepsize $\tau_n$ satisfies that $\lambda \tau_n^{\alpha}\le \delta$ for some $\delta>0$  (see the corresponding section for the details). If $\cD_{\tau}^{\alpha}v_n\le \lambda v_n+c$ for $\lambda>0$, then it holds for some $\mu>0$ that $y_n \le (y_0+c/\lambda)E_{\alpha}(\mu^{-1}\lambda t_n^{\alpha})-c/\lambda$.
\end{theorem}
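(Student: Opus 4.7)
The plan is to reduce the differential inequality $\cD_\tau^\alpha v_n \le \lambda v_n + c$ to an equality via the comparison principle established earlier in the paper, then shift out the constant $c$ by setting $\bar v_n := v_n + c/\lambda$, and finally compare the resulting homogeneous sequence with the mesh-evaluated Mittag-Leffler function $\varphi(t) := (v_0 + c/\lambda)\,E_\alpha(\mu^{-1}\lambda t^\alpha)$ for a constant $\mu = \mu(\alpha,\delta) \in (0,1]$ to be chosen. The motivation is that $\varphi$ satisfies $D_c^\alpha \varphi = \mu^{-1}\lambda \varphi$ continuously, so a slight enlargement $\mu^{-1} > 1$ of the rate provides exactly the room needed to absorb the discretization defect and make $\varphi(t_n)$ into a discrete super-solution of the homogeneous recursion.

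Concretely, I first introduce $w_n$ solving $\cD_\tau^\alpha w_n = \lambda w_n + c$ with $w_0 = v_0$. The step-size condition $\lambda\tau_n^\alpha \le \delta$ guarantees that the implicit one-step system is uniquely solvable (because the diagonal coefficient $c_0^n$ of the discretization dominates the $\lambda$ on the right-hand side) and that the comparison principle from the earlier sections applies to the inequality, yielding $v_n \le w_n$. Setting $\bar w_n := w_n + c/\lambda$ converts the recursion into $\cD_\tau^\alpha \bar w_n = \lambda \bar w_n$ with $\bar w_0 = v_0 + c/\lambda$, and the claim reduces to $\bar w_n \le \varphi(t_n)$. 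To prove this, I look at the error $e_n := \varphi(t_n) - \bar w_n$: if $\varphi$ can be shown to be a discrete super-solution, meaning $\cD_\tau^\alpha \varphi(t_n) \ge \lambda \varphi(t_n)$, then $e_0 = 0$ and $\cD_\tau^\alpha e_n \ge \lambda e_n$, and one more application of the comparison principle (again relying on the step-size restriction to keep the relevant coefficients nonnegative) gives $e_n \ge 0$, hence $\bar w_n \le \varphi(t_n)$, which is the desired bound.

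The main obstacle is the super-solution inequality $\cD_\tau^\alpha \varphi(t_n) \ge \lambda \varphi(t_n)$. Since continuously $D_c^\alpha \varphi = \mu^{-1}\lambda \varphi$, this reduces to a one-sided quadrature estimate of the form $\cD_\tau^\alpha \varphi(t_n) \ge \eta\, D_c^\alpha \varphi(t_n)$ with some $\eta = \eta(\alpha,\delta) \in (0,1)$ independent of the nonuniform mesh; once such an $\eta$ is secured, the choice $\mu \le \eta$ produces $\cD_\tau^\alpha \varphi(t_n) \ge \eta \mu^{-1}\lambda \varphi(t_n) \ge \lambda \varphi(t_n)$. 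To establish the estimate, I plan to use the hypothesis that $c_{n-j}^n$ is comparable to the average of $g_{1-\alpha}$ on the $j$-th interval, which allows me to rewrite $\cD_\tau^\alpha \varphi(t_n)$ as an approximation of the integral representation of $D_c^\alpha \varphi(t_n)$; the log-convexity of $E_\alpha$ on $[0,\infty)$ together with the bound $\lambda \tau_n^\alpha \le \delta$ then controls the potentially problematic last-interval contribution, where $\varphi$ is growing the fastest. The delicate part is obtaining a constant $\eta$ that is uniform over all admissible nonuniform meshes, and this is precisely where both the complete positivity of the scheme and the kernel-comparability hypothesis are genuinely needed; translating the result back through $\bar w_n = w_n + c/\lambda$ and $v_n \le w_n$ then yields the claimed estimate.
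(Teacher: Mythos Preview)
Your overall framework coincides with the paper's: take the continuous solution $\varphi(t)=(v_0+c/\lambda)E_\alpha(\mu^{-1}\lambda t^\alpha)$ of $D_c^\alpha\varphi=\mu^{-1}(\lambda\varphi+c)$, show that for suitable $\mu\in(0,1]$ it is a discrete super-solution, i.e.\ $\cD_\tau^\alpha\varphi(t_n)\ge \lambda\varphi(t_n)+c$, and then apply the comparison principle once. Your detour through the exact discrete solution $w_n$ is harmless but unnecessary; the paper compares $v_n$ with $\varphi(t_n)$ directly.

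The genuine gap is in the super-solution step. You describe it as controlling ``the potentially problematic last-interval contribution'' via ``log-convexity of $E_\alpha$ on $[0,\infty)$''. Neither part is right. First, the difficulty is not confined to the last interval. Writing $w(t)=\varphi'(t)$, on \emph{every} subinterval where $w$ is increasing both $w(s)$ and $g_{1-\alpha}(t_n-s)$ are increasing in $s$, so Chebyshev's sorting inequality points the wrong way and the kernel-comparability assumption alone cannot yield $\cD_\tau^\alpha\varphi(t_n)\ge \eta\, D_c^\alpha\varphi(t_n)$. Second, log-convexity of $z\mapsto E_\alpha(z)$, even if granted, says nothing directly about ratios of $\varphi'$ along the mesh, and $t\mapsto\varphi(t)$ itself is not log-convex near $t=0$ (its derivative blows up like $t^{\alpha-1}$).

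What the paper actually does is the substantial part of the argument. It first shows that $w$ has a single minimum $t_*$, decreasing on $(0,t_*)$ and increasing on $(t_*,\infty)$; on the decreasing portion the Chebyshev argument of Lemma~\ref{lmm:concavecomp} applies. For the increasing portion it proves (Lemma~\ref{lmm:lowerbdgrow}) a uniform ratio bound $w(s)/w(s+\tau)\ge\mu_1$ valid for all $s\ge t_*$ and all $\tau$ with $\lambda\tau^\alpha\le 1$, with $\mu_1$ independent of $\alpha$, $\lambda$, and the mesh. This bound is obtained from the integral representation \eqref{eq:integralrepcont} of $E_{\alpha,\alpha}$, which splits $w$ into a dominant exponential piece $\alpha^{-1}\lambda^{1/\alpha}e^{\lambda^{1/\alpha}t}$ and an algebraically small remainder; a case analysis (separating $\alpha\ge 1/2$ from $\alpha<1/2$) shows the remainder is a bounded fraction of the main term on the relevant range, and for small $\alpha$ one must also check that $t_*$ is not too close to $0$. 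Only with this ratio bound can the interval-by-interval quadrature estimate be closed uniformly, and this analysis is precisely the missing idea in your sketch.
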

In this result, we have removed the usual constraint on the stepsize ratio in literature. The result is based on the comparison principles and a careful analysis of the asymptotic behavior of the solutions to the continuous equation.

The rest of the paper is organized as follows. In section \ref{sec:prelim}, some preliminary concepts and results are reviewed, including the definition of generalized Caputo derivatives, the behaviors of the Mittag-Leffler functions and the discretization we conisder in this work on nonuniform meshes. In section \ref{sec:comparisonprinciple}, we present a new proof of comparision principles using the resolvent kernels for the time continuous equations and then generalize it to completely positive discretizations on nonuniform meshes.  Our main results for Gr\"onwall inequalities are then established in section \ref{sec:gronwall} and some applications to dissipative systems are presented in section \ref{sec:application}.

\section{Preliminaries and setup}\label{sec:prelim}

In this section, we review some basic concepts and results for later sections.

\subsection{The generalized Caputo derivatives and the resolvent kernels}\label{subsec:capintro}

In this subsection, we summarize the generalized Caputo derivative introduced in \cite{liliu18frac,liliu2018compact}. This generalized definition is theoretically convenient, since it allows us to use the underlying group structure. Moreover, the generalzied definition allows us to consider the fractional {\it inequalities} for functions that are merely continuous.

Recall the Abel integral kernels for $\alpha>0$ in \eqref{eq:kernelfode}. Recall the standard one-sided convolution for two functions $u$ and $v$ defined on $[0,\infty)$
\begin{gather}
u*v(t)=\int_{[0, t]}u(s)v(t-s)\,ds,
\end{gather}
which can be generalized to distributions whose supports are on $[0,\infty)$ (see \cite[sections 2.1,2.2]{liliu18frac}). Let $g_0=\delta$ and 
\begin{gather}
g_{\beta}(t)=\frac{1}{\Gamma(1+\beta)}D\left(\theta(t)t^{\beta}\right), \beta\in (-1, 0).
\end{gather} 
Here $D$ means the distributional derivative on $\mathbb{R}$.  Then, for any $\beta_1>-1$ and $\beta_2>-1$,
\begin{gather}\label{eq:group}
g_{\beta_1}*g_{\beta_2}=g_{\beta_1+\beta_2}.
\end{gather}
We remark that $g_{\beta}$ can indeed be defined for $\beta\in \mathbb{R}$ (see \cite{liliu18frac}) so that $\{g_{\beta}: \beta\in\mathbb{R}\}$ forms a convolution group. Then, we introduce the generalized definition.
\begin{definition}[\cite{liliu18frac,liliu2018compact}] \label{def:caputo}
Let $0<\alpha<1$ and $T>0$. For $u\in L_{\loc}^1[0, T)$ and a given $u_0\in \mathbb{R}$,the $\alpha$-th order generalized Caputo derivative of $u$ associated with initial value $u_0$ is a distribution with support in $[0, T)$ given by 
\begin{gather}\label{eq:capgen}
D_c^{\alpha}u=g_{-\alpha}*\Big((u-u_0)\theta(t)\Big).
\end{gather}
\end{definition}
It has been verified in \cite{liliu18frac} that if the function $u$ is absolutely continuous, the generalized definition reduces to the classical definition \eqref{eq:captra}.    A function $u\in L_{\loc}^1[0, T)$ is a weak solution to \eqref{eq:fracode} on $[0, T)$ with initial value $u_0$ if the equality holds in distribution. A weak solution $u$ is a strong solution if  $\lim_{t\to 0+}\frac{1}{t}\int_0^t|u(s)-u_0|ds=0$ and both sides of   \eqref{eq:fracode} are locally integrable on $[0,T)$. 
Using the group property \eqref{eq:group}, one may obtain directly that
\begin{proposition}[{\cite[Proposition 4.2]{liliu18frac}}]\label{pro:equi}
Suppose $f\in L_{\loc}^{\infty}([0,\infty)\times\mathbb{R}; \mathbb{R})$. Fix $T>0$. Then, $u(t)\in L_{\loc}^1[0, T)$ with initial value $u_0$ is a strong solution of \eqref{eq:fracode} on $(0, T)$ if and only if it solves the following integral equation 
\begin{gather}\label{eq:vol}
u(t)=u_0+\frac{1}{\Gamma(\alpha)}\int_0^t(t-s)^{\alpha-1}f(s, u(s))ds,~\forall t\in (0, T).
\end{gather}
\end{proposition}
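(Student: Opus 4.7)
The plan is to exploit the convolution group structure $g_{\beta_1} * g_{\beta_2} = g_{\beta_1+\beta_2}$ from \eqref{eq:group}, which collapses the equivalence between $D_c^{\alpha}u = f(t,u)$ and \eqref{eq:vol} to a single convolution with $g_{\alpha}$ in one direction and $g_{-\alpha}$ in the other. Since $g_0 = \delta$, the identity $g_{\alpha} * g_{-\alpha} = \delta$ is the algebraic engine: convolving a given distributional equation on the left with the missing kernel should move between the two formulations provided we can justify associativity.

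For the forward direction, assume $u$ is a strong solution. Then, using Definition \ref{def:caputo}, the equation $D_c^{\alpha}u = f(\cdot,u(\cdot))$ reads
\begin{gather*}
g_{-\alpha} * \bigl((u-u_0)\theta(t)\bigr) = f(\cdot,u(\cdot))\,\theta(t)
\end{gather*}
as distributions supported in $[0,\infty)$. Both sides are locally integrable: the right side because $f\in L^{\infty}_{\loc}$ and $u\in L^1_{\loc}[0,T)$, the left side by the definition of strong solution. Convolve both sides on the left by $g_{\alpha}$; by associativity of convolution inside the subalgebra of distributions with support in $[0,\infty)$ and the group property, the left side becomes $g_0 * ((u-u_0)\theta(t)) = (u-u_0)\theta(t)$, while the right side equals
\begin{gather*}
g_{\alpha} * \bigl(f(\cdot,u(\cdot))\theta(t)\bigr) = \frac{1}{\Gamma(\alpha)}\int_0^t (t-s)^{\alpha-1}f(s,u(s))\,ds,
\end{gather*}
which yields \eqref{eq:vol} for a.e.\ $t \in (0,T)$. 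The reverse direction runs the same computation with $g_{-\alpha}$ in place of $g_{\alpha}$: starting from \eqref{eq:vol}, rewritten as $(u-u_0)\theta(t) = g_{\alpha} * (f(\cdot,u(\cdot))\theta(t))$, convolution with $g_{-\alpha}$ recovers $D_c^{\alpha}u = f(\cdot,u(\cdot))\theta(t)$. The strong-solution condition $\lim_{t\to 0+}\frac{1}{t}\int_0^t |u(s)-u_0|\,ds = 0$ then follows from the integral representation by bounding the right-hand side of \eqref{eq:vol} by $\|f\|_{L^{\infty}}\,t^{\alpha}/\Gamma(\alpha+1)$.

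The main obstacle is the rigorous justification of associativity in the step $g_{\alpha} * g_{-\alpha} * X = \delta * X = X$ when $X = (u-u_0)\theta(t)$ is merely locally integrable, since the classical associativity of distributional convolution requires compact support for at least some of the factors. The correct setting is the convolution algebra of distributions supported in $[0,\infty)$ developed in \cite{liliu18frac}, where $g_{-\alpha}$ is defined precisely so that this group structure, and the associated associativity, hold. Once this framework is invoked, the proof reduces to the two one-line convolution computations above; the substantive content of the proposition is really the consistency of the generalized derivative of Definition \ref{def:caputo} with the classical Volterra formulation, rather than any analytic subtlety about $f$ or $u$.
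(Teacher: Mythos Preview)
Your proposal is correct and follows exactly the approach the paper indicates: the paper does not give a proof but simply notes that ``using the group property \eqref{eq:group}, one may obtain directly'' the proposition, citing \cite[Proposition 4.2]{liliu18frac}. Your argument---convolving with $g_{\alpha}$ or $g_{-\alpha}$ and invoking associativity in the convolution algebra of distributions supported in $[0,\infty)$---is precisely that derivation, and your remark that the analytic content lies in the distributional framework of \cite{liliu18frac} rather than in any subtlety about $f$ or $u$ is accurate.
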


Using this integral formulation, the following has been shown in \cite{liliu18frac}. 
\begin{lemma}\label{pro:exisunique}
Suppose  $f:  [0,\infty)\times (u_*, u^*)\to \mathbb{R}$ is continuous and locally Lipschitz in $u$ where $u_*\in [-\infty,\infty)$ and $u^*\in (-\infty, \infty]$. For any $u_0\in (u_*, u^*)$, there is a maximum time $T_b>0$ and a unique weak solution on $[0, T_b)$ satisfying $u(0+)=u_0$. This weak solution is a strong solution, and if $T_b<\infty$ then either $\limsup_{t\to T_b^-}u(t)=u^*$ or $\liminf_{t\to T_b^-}u(t)=u_*$.
\end{lemma}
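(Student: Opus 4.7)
The plan is to work entirely with the Volterra integral equation \eqref{eq:vol}, passing between the distributional formulation and the integral formulation via Proposition \ref{pro:equi}. So there are two things to prove: (i) local existence and uniqueness of a continuous solution of \eqref{eq:vol} near $t=0$ taking values in $(u_*, u^*)$, and (ii) a maximal extension with the claimed blow-up alternative.

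For (i), I would fix $\delta>0$ with $[u_0-\delta,u_0+\delta]\subset(u_*,u^*)$, let $M:=\sup\{|f(s,v)|:0\le s\le 1,\,|v-u_0|\le\delta\}$, and let $L$ denote the corresponding Lipschitz constant of $f$ in its second argument on this compact set. On the complete metric space $X_{T_0}:=\{v\in C([0,T_0]):\|v-u_0\|_\infty\le\delta\}$ introduce the Picard operator
\[
(\Phi v)(t):=u_0+\frac{1}{\Gamma(\alpha)}\int_0^t (t-s)^{\alpha-1} f(s,v(s))\,ds.
\]
The identity $\int_0^t(t-s)^{\alpha-1}ds=t^\alpha/\alpha$ yields $\|\Phi v-u_0\|_\infty\le M T_0^\alpha/\Gamma(\alpha+1)$ and $\|\Phi v-\Phi w\|_\infty\le L T_0^\alpha/\Gamma(\alpha+1)\,\|v-w\|_\infty$. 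Choosing $T_0$ small enough so that these are at most $\delta$ and $\tfrac{1}{2}$ respectively makes $\Phi$ a contraction self-map of $X_{T_0}$, so Banach's fixed-point theorem supplies a unique strong solution on $[0,T_0]$. By Proposition \ref{pro:equi}, any weak solution staying in a compact subinterval of $(u_*,u^*)$ near $0$ makes $f(\cdot,u(\cdot))$ locally bounded, hence it satisfies \eqref{eq:vol}, so the uniqueness extends to the class of weak solutions.

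For (ii), let $T_b$ be the supremum of times $T$ for which a strong solution exists on $[0,T]$; uniqueness glues these solutions into a single function on $[0,T_b)$. Assume for contradiction that $T_b<\infty$ and that \emph{both} $\limsup_{t\to T_b^-}u<u^*$ and $\liminf_{t\to T_b^-}u>u_*$. Then there exist $\epsilon>0$ and $T^*<T_b$ with $u([T^*,T_b))\subset[u_*+\epsilon,u^*-\epsilon]$; consequently $f(\cdot,u(\cdot))$ is bounded on $[0,T_b)$. Define the history function
\[
h(t):=u_0+\frac{1}{\Gamma(\alpha)}\int_0^{T_b}(t-s)^{\alpha-1} f(s,u(s))\,ds,\qquad t\ge T_b,
\]
which is well-defined and continuous at $T_b$ by the integrable weak singularity and dominated convergence. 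Running the same contraction argument on the perturbed integral equation $u(t)=h(t)+\frac{1}{\Gamma(\alpha)}\int_{T_b}^t(t-s)^{\alpha-1} f(s,u(s))\,ds$ on a small interval $[T_b,T_b+\eta]$ extends $u$ strictly past $T_b$, contradicting maximality.

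The step I expect to be the main obstacle is verifying that the function produced by the extension in (ii) genuinely solves the original equation \eqref{eq:vol} on all of $[0,T_b+\eta]$; this reduces to the identity obtained by splitting $\int_0^t=\int_0^{T_b}+\int_{T_b}^t$ and using that the solution on $[0,T_b)$ already satisfies \eqref{eq:vol}, so no true ``continuation of the fractional ODE'' is needed beyond the one-shot contraction at $T_b$.
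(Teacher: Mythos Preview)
The paper does not give its own proof of this lemma; it is quoted from \cite{liliu18frac} (see the sentence immediately preceding the statement: ``Using this integral formulation, the following has been shown in \cite{liliu18frac}''). So there is nothing in the present paper to compare against.

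Your outline is the standard Picard-iteration/maximal-continuation argument for Volterra equations with weakly singular kernels, and it is essentially correct. Two small points worth tightening: in step (ii) you need $f(\cdot,u(\cdot))$ bounded on all of $[0,T_b)$, not just on $[T^*,T_b)$, in order to define $h(t)$; this follows because on the compact interval $[0,T^*]$ the continuous solution $u$ ranges in a compact subset of $(u_*,u^*)$, so combining the two pieces gives a uniform bound. Second, your appeal to Proposition~\ref{pro:equi} to pass from weak to strong solutions technically requires $f\in L^\infty_{\loc}([0,\infty)\times\mathbb{R})$, whereas here $f$ is only defined on $[0,\infty)\times(u_*,u^*)$; a harmless Lipschitz extension of $f$ off a compact neighborhood of the solution's range, or a short argument that any weak solution with $u(0+)=u_0$ must initially remain near $u_0$, closes this gap.
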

Since the weak solution becomes the strong solution and in fact $C^1(0,T_b)\cap C[0, T_b)$, the Caputo derivative then reduces to the classical one.   Consider the following linear FODE
\begin{gather}
D_c^{\alpha}v=\beta v+c,
\end{gather}
the solution exists globally (i.e., $T_b=\infty$) and is given by
\begin{gather}\label{eq:exactsol}
v=\left(v_0+\frac{c}{\beta}\right)E_{\alpha}(\beta t^{\alpha})-\frac{c}{\beta}.
\end{gather}
Here, $E_{\alpha}(z):=E_{\alpha, 1}(z)$ and the Mittag-Leffler function $E_{\alpha,\beta}$ is an entire function given by (see, for example, \cite{mainardi2007probability,gllm02})
\begin{gather}\label{eq:powerseriesofml}
E_{\alpha, \beta}(z)=\sum_{k=0}^{\infty}\frac{z^k}{\Gamma(\alpha k+\beta)}, \alpha>0, z\in 
\mathbb{C}.
\end{gather}
It is then clear that $E_{\alpha}'(z)=\frac{E_{\alpha,0}(z)}{\alpha z}=\alpha^{-1}E_{\alpha, \alpha}(z)$.
The function $E_{\alpha,\beta}(z)$ has the following integral representation (see \cite{gllm02}) for $\alpha\in (0, 2)$:
\begin{gather}\label{eq:mlfuncintegral}
E_{\alpha,\beta}(z)
=\begin{cases}
\frac{1}{\alpha}z^{(1-\beta)/\alpha}e^{z^{1/\alpha}}+\frac{1}{2\pi i\alpha}\int_{\gamma(\epsilon;\delta)}
\frac{e^{\zeta^{1/\alpha}}\zeta^{(1-\beta)/\alpha}}{\zeta-z}d\zeta, & z\in R_+,\\
\frac{1}{2\pi i\alpha}\int_{\gamma(\epsilon;\delta)}
\frac{e^{\zeta^{1/\alpha}}\zeta^{(1-\beta)/\alpha}}{\zeta-z}d\zeta & z\in R_-,
\end{cases}
\end{gather}
where $\gamma(\e; \delta)$ is the curve consisting of $\{re^{-i\delta}: r\ge \e\}$, $\{\e e^{i\theta}: -\delta \le \theta \le \delta\}$ and $\{re^{i\delta}: r\ge \e\}$, going from $\infty e^{-i\delta}$ to $\infty e^{i\delta}$. The parameter $\delta$ satisfies
$\frac{\pi \alpha}{2}<\delta\le \min(\alpha \pi, \pi)$.
The region $R_+$ is the one on the right of $\gamma(\e; \delta)$ while $R_-$ is  on the left.
Note that the two expressions appear different, but they are actually connected continuously across the curve $\gamma(\e; \delta)$.

Next, we recall  the resolvent kernels. In \cite{feng2018continuous}, the resolvent kernels (see \cite{clement1979abstract,clement1981asymptotic,miller1968volterra}) have been used to establish the monotonicity of the solutions to autonomous fractional ODEs. This has then been generalized to discrete schemes on nonuniform meshes in \cite{fengli2023a}.
\begin{definition}\label{def:resol}[see \cite{clement1981asymptotic,miller1968volterra}]
Let $\lambda>0$. The resolvent kernels $r_{\lambda}$ and $s_{\lambda}$ for $a$ are defined respectively by
\begin{gather}
 r_{\lambda}+\lambda r_{\lambda}*a=\lambda a,\quad   s_{\lambda}+\lambda s_{\lambda}* a=1_{t\ge 0}.
\end{gather}
\end{definition}
It has been shown in \cite[Proposition 2.1]{fengli2023a} that both $r_{\lambda}$ and $s_{\lambda}$ are completely monotone for all $\lambda>0$ if $a$ is completely monotone.  For the kernel $a=g_{\alpha}$, $r_{\lambda}$ and $s_{\lambda}$ are thus completely monotone and strictly positive. In fact, it has been mentioned in the proof of \cite[Lemma 3.4]{feng2018continuous} that the resolvent $r_{\lambda}$ for $g_{\alpha}=\frac{1}{\Gamma(\alpha)}t_+^{\alpha-1}$ is
\begin{gather}
r_{\lambda}(t)=-\frac{d}{dt}E_{\alpha}(-\lambda t^{\alpha})>0.
\end{gather}

\subsection{Discretization on nonuniform meshes}\label{subsec:nonunidis}

There are two ways to discretize the fractional ODEs. One is to discretize the differential form \eqref{eq:fracode} even though the derivative may be understood in the generalized one \eqref{eq:capgen}, and the other way is to discretize the integral form \eqref{eq:fracint}.

Let the computational time interval be $[0, T]$, and $0=t_0<t_1<t_2<\cdots<t_N=T$ be the grid points.  Define
\begin{gather}
\tau_n:=t_n-t_{n-1}, \quad n\ge 1.
\end{gather}
We allow $T=\infty$ in some applications and in this case, there are infinitely many grid points $t_n$. Let $u_n$ be the numerical solution at $t_n$.

The integral form \eqref{eq:fracint} may be discretized for $\theta\in [0, 1]$ by
\begin{gather}\label{eq:integraldis}
u_n=u_0+\sum_{j=1}^n \bar{a}_{n-j}^n f_j^{\theta}\tau_j=u_0+\sum_{j=1}^n a_{n-j}^n f_j^{\theta}=:u_0+\cI_{\tau}^{\alpha} f_n^{\theta}.
\end{gather}
Here, $\{\bar{a}_{n-j}^n\}$ is an approximation of the average of $g_{\alpha}(t_n-s)=\frac{1}{\Gamma(\alpha)}(t_n-s)^{\alpha-1}$ on $[t_{j-1}, t_j]$ while $a_{n-j}^n$ is like the integral of $g_{\alpha}(t_n-s)$ on this interval. The notation $f_j^{\theta}$ means an approximation of $f$ at $(1-\theta) t_j+\theta  t_{j-1}$. We consider two examples here. The first is
\begin{gather}\label{eq:ftheta1}
f_j^{\theta}=(1-\theta) f(t_j, u_j)+\theta f(t_{j+1}, u_{j+1})
\end{gather}
while the second is 
\begin{gather}\label{eq:ftheta2}
f_j^{\theta}=f(t_j^{\theta}, u_j^{\theta}), \quad
t_j^{\theta}:=(1-\theta) t_j+\theta  t_{j-1}, \quad u_j^{\theta}:=(1-\theta) u_j+\theta  u_{j-1}.
\end{gather}
These two approximations will have no big difference in our analysis later.

Next, we consider discretization of the differential form \eqref{eq:fracode}.
One can first approximate the derivative and define
\begin{gather}
\nabla_{\tau}u_j:=u_j-u_{j-1}.
\end{gather}
One may then introduce the following approximation inspired by \eqref{eq:captra}
\begin{gather}\label{eq:diffdis}
\cD_{\tau}^{\alpha}u_n:=\sum_{j=1}^n c_{n-j}^n \nabla_{\tau}u_j=f_n^{\theta}, \quad n\ge 1.
\end{gather}
Here, $\nabla_{\tau}u_j\approx u'(t_j)\tau_j$ so $c_{n-j}^n$ is an approximation of the average of $g_{1-\alpha}(t_n-s)$ on $[t_{j-1}, t_j)$.

We remark that \eqref{eq:integraldis} and \eqref{eq:diffdis} can be related by the so-called pseudo-convolution in  \cite{fengli2023a} (also used in \cite{liao2020positive} for defining the ROC kernels).   We arrange the kernel $(a_{n-j}^n)$ into a lower triangular array $A$ of the following form
\begin{gather}\label{eq:arraykernel}
A=\begin{bmatrix}
a_0^{1} &  &  &  &  \\
a_1^{2}& a_0^{2} & & & \\
\cdots & \vdots & \vdots &  &  \\
a_{n-1}^{n} &  \cdots & a_1^{n} & a_0^{n} &\\
\cdots & \vdots & \vdots &   & \vdots\\
\end{bmatrix}.
\end{gather}

The pseudo-convolution $\pc$ between two array kernels is given by the usual matrix product between two arrays of the form \eqref{eq:arraykernel} \cite{fengli2023a}. In particular, $C=A\bar{*}B$ is given by
\begin{gather}\label{eq:convnonuni}
c_{k}^n=\sum_{j=0}^k a_{k-j}^n b_j^{n+j-k}, \quad \text{or}\quad c_{n-k}^n=\sum_{j=k}^n a_{n-j}^n b_{j-k}^j.
\end{gather}
There are some special kernels that play important roles.
The first is $I$ with $I_{n-j}^n=\delta_{nj}$. This is the kernel that is $1$ on the diagonal. The other one is $L$ with $L_{n-j}^n=1$ for all $j\le n$. This kernel corresponds to the Heaviside function $1_{t\ge 0}$ for the continuous case. The inverse of $L$, $L^{(-1)}$, satifies that $(L^{(-1)})_{n-j}^n=1$ for $j=n$, $(L^{(-1)})_{n-j}^n=-1$ for $j=n-1$ and $0$ otherwise. This corresponds to the finite difference operator in \eqref{eq:diffdis}. It can be verified that $I$ is the identify for the pseudo-convolution, and the following properties hold.
\begin{enumerate}[(a)]
\item The pseudo-convolution is associative.
\item If  $B$ is an inverse of $A$, namely $A\pc B=I$, then $B\pc A=I$.
\end{enumerate}
The pseudo-convolution provides us a convenient way to investigate the properties of the kernels. For example, the monotonicity preserving properties of some discretizations has been established using such tools in \cite{feng2023class}.

For a given $A$, the kernel $C_R$ with $A\pc C_R=L$ is called the right complementary kernel. The kernel $C_L$ with $C_L\pc A=L$ is called the left complementary kernel.  The kernel $C_R$ is in fact the so-called RCC kernel (see \cite{liao2023discrete}) and $C_L$ is the  DCC kernel (see \cite{liao2019discrete}).  
If $A$ is a kernel that is invertible, then direct verification tells us that $C_R=A^{(-1)}\pc L$ and $C_L=L\pc A^{(-1)}$.
Moreover, $a_j^n$ is nonincreasing in $n$ if and only if the inverse of $C_R$ has nonpositive off-diagonals; $a_j^n$ is nonincreasing in $j$ if and only if the inverse of $C_L$ has nonpositive off-diagonals.

The pseudo-convolution is also defined between a kernel and a vector $x=(x_1, x_2, \cdots)$:
\begin{gather}
y=A\pc x \quad \Longleftrightarrow  
\quad y_{n}=\sum_{j=1}^{n}a_{n-j}^n x_{j}, \forall n\ge 1,
\end{gather}
then it holds that
$A\pc(B\pc x)=(A\pc B)\pc x$.

One has the following simple conclusion.
\begin{lemma}\label{lmm:schemeequiv}
Consider the kernel $C:=(c_{n-j}^n)$ for $\cD_{\tau}^{\alpha}$ in \eqref{eq:diffdis} and the kernel $A:=(a_{n-j}^n)$ for $\cI_{\tau}^{\alpha}$ in the integral scheme \eqref{eq:integraldis}. If $C$ is the right complementary kernel of $A$, then the two schemes are equivalent.
\end{lemma}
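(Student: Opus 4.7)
The plan is to manipulate both discrete schemes purely at the level of the pseudo-convolution algebra, using the hypothesis $A\pc C=L$ together with associativity and the telescoping identity $L\pc \nabla_\tau u=(u_n-u_0)_n$. Viewing the sequences $(f_j^\theta)_{j\ge 1}$ and $(\nabla_\tau u_j)_{j\ge 1}$ as vectors, I would first rewrite the differential scheme \eqref{eq:diffdis} compactly as $C\pc\nabla_\tau u=f^\theta$ and the integral scheme \eqref{eq:integraldis} as $w=A\pc f^\theta$, where $w_n:=u_n-u_0$. The telescoping identity $\sum_{j=1}^n\nabla_\tau u_j=u_n-u_0$ gives $L\pc\nabla_\tau u=w$, and this is the bridge between the two formulations.

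For the forward implication (differential $\Rightarrow$ integral), I would start from $C\pc\nabla_\tau u=f^\theta$ and left-pseudo-convolve with $A$. Associativity, which is property (a) recorded right after \eqref{eq:convnonuni}, yields $(A\pc C)\pc\nabla_\tau u=A\pc f^\theta$, and the hypothesis $A\pc C=L$ then gives $L\pc\nabla_\tau u=A\pc f^\theta$. Applying the telescoping identity to the left-hand side, this is exactly $u_n-u_0=\sum_{j=1}^n a_{n-j}^n f_j^\theta$, i.e., the integral scheme.

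For the reverse implication (integral $\Rightarrow$ differential), I would start from $w=A\pc f^\theta$. Apply $L^{(-1)}$ on the left: since $L^{(-1)}\pc w$ recovers $\nabla_\tau u$ (by the very definition of $L^{(-1)}$, using $w_0=0$ at the initial step), associativity gives $\nabla_\tau u=(L^{(-1)}\pc A)\pc f^\theta$. Now pseudo-convolve with $C$ on the left; the needed identity is $C\pc L^{(-1)}\pc A=I$. I would obtain this by noting that $A\pc(C\pc L^{(-1)})=(A\pc C)\pc L^{(-1)}=L\pc L^{(-1)}=I$, so $C\pc L^{(-1)}$ is a right inverse of $A$, and then property (b) of the pseudo-convolution promotes it to a two-sided inverse, giving $(C\pc L^{(-1)})\pc A=I$. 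Thus $C\pc\nabla_\tau u=f^\theta$, which is the differential scheme.

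The argument is essentially formal, so I do not expect a genuine obstacle; the only point that requires a little care is the step $L^{(-1)}\pc w=\nabla_\tau u$ at $n=1$, which uses that $w_0=u_0-u_0=0$ is the natural value to assign when extending the vector to index $0$, and the invocation of property (b) (two-sided invertibility) for the kernel $A$, which implicitly uses $a_0^n\neq 0$ so that $A$ is actually invertible as a lower-triangular array. Both points are standard under the usual nondegeneracy of the discretization kernels on nonuniform meshes.
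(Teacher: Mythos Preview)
Your proposal is correct and follows essentially the same approach as the paper: both arguments identify $C\pc L^{(-1)}$ as the (two-sided) inverse of $A$ via $A\pc C=L$ and use this together with the telescoping identity $L\pc\nabla_\tau u=(u_n-u_0)$ to pass between the two formulations. The paper compresses the whole thing into two lines, writing $\cD_\tau^\alpha u_n=C\pc L^{(-1)}\pc(u-u_0)_n$ and $\cI_\tau^\alpha f_n^\theta=A\pc f_n^\theta$ directly, whereas you spell out both implications and the invocation of property~(b) explicitly; but there is no substantive difference in method.
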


\begin{proof}
It is clear that 
$\cD_{\tau}^{\alpha}u_n=C\pc \nabla_{\tau}u_n=C\pc L^{(-1)}\pc (u-u_0)_n$,
while
$\cI_{\tau}^{\alpha} f(t_n^{\theta}, u_n^{\theta})=A\pc f_n^{\theta}$.
Hence, if $C$ is the right complementary kernel of $A$, $C\pc L^{(-1)}$ is the inverse of $A$ and the claim  follows.
\end{proof}

Below, we will mainly focus on schemes for the differential form. 
We then have
\begin{gather}\label{eq:inverseformula}
B:=A^{-1}=C\pc L^{-1} \Leftrightarrow b_0^n=c_0^n, \quad 
b_{n-j}^n=c_{n-j}^n-c_{n-j-1}^n, j\le n-1.
\end{gather}

Regarding the solvability, the following is straightforward and we omit the proof.
\begin{lemma}
Suppose $f(t,\cdot)$ is uniformly Lipschitz with constant $M$. If $\theta M a_0^n<1$ or equivalently $\theta M <c_0^n$ for all $t_n \le T$, the numerical solution to \eqref{eq:integraldis} or equivalently to \eqref{eq:diffdis} is uniquely solvable. 
\end{lemma}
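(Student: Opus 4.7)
The plan is to recast each time step as a scalar fixed-point equation on $\mathbb{R}$ and apply the Banach contraction mapping theorem, iterating in $n$. First I would consolidate the two formulations into one: by Lemma \ref{lmm:schemeequiv} the integral and differential schemes are equivalent, while \eqref{eq:inverseformula} together with $B=A^{-1}$ gives $b_0^n = c_0^n$ and $a_0^n b_0^n = 1$, hence $a_0^n c_0^n = 1$; this identity turns $\theta M a_0^n < 1$ into $\theta M < c_0^n$. It therefore suffices to work with \eqref{eq:diffdis}.

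Next, assuming inductively that $u_0, \ldots, u_{n-1}$ have already been uniquely determined, I would isolate the $j=n$ term in \eqref{eq:diffdis} and rearrange it as the scalar equation $u_n = \Phi_n(u_n)$ on $\mathbb{R}$, where
\[
\Phi_n(w) := u_{n-1} - \frac{1}{c_0^n}\sum_{j=1}^{n-1} c_{n-j}^n \nabla_{\tau} u_j + \frac{1}{c_0^n} f_n^\theta(w),
\]
and $f_n^\theta(w)$ denotes $f_n^\theta$ with the occurrence of $u_n$ replaced by the unknown $w$. By the definition of $f_n^\theta$ in either \eqref{eq:ftheta1} or \eqref{eq:ftheta2}, the unknown enters through a single evaluation of $f(t,\cdot)$ weighted by a factor $\theta$, so $w\mapsto f_n^\theta(w)$ is globally Lipschitz on $\mathbb{R}$ with constant at most $\theta M$. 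Consequently $\Phi_n$ is a self-map of $\mathbb{R}$ with Lipschitz constant at most $\theta M / c_0^n$, which is strictly less than $1$ by hypothesis.

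The Banach fixed-point theorem on the complete metric space $\mathbb{R}$ then yields a unique $u_n$, and induction on $n$ produces the unique numerical solution at every grid point up to $t_N = T$. There is essentially no hidden obstacle: the argument is a routine contraction-mapping step, and the only ingredient genuinely specific to this discretization setup is the diagonal identity $a_0^n c_0^n = 1$, which aligns the two equivalent forms of the smallness hypothesis stated in the lemma.
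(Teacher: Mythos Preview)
Your argument is correct and is exactly the kind of routine contraction-mapping step the authors have in mind: the paper explicitly calls this lemma ``straightforward'' and omits the proof, so there is nothing to compare against. Your derivation of the diagonal identity $a_0^n c_0^n = 1$ from \eqref{eq:inverseformula} to reconcile the two forms of the hypothesis is the only point worth noting, and you handle it correctly.
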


\begin{remark}
As commented in \cite{feng2023class}, if $c_{0}^n$ is like the average of $g_{1-\alpha}(t_n-s)$, then $a_0^n=1/c_0^n$ is like the integral of $g_{\alpha}$ on $[t_{n-1}, t_n)$ so the discussion above is quite natural. 
\end{remark}

\subsection{Completely positive discretizations}\label{sec:mondis}

We consider a class of variable-step discretizations with the following basic assumptions.
\begin{assumption}\label{ass:cpinverse}
Suppose the array kernel $A$ is invertible and the inverse $B=A^{(-1)}=(b_{n-j}^n)$ satisfies
\begin{gather}\label{eq:Bcondition}
\begin{split}
& b_0^n>0, \quad b_{n-j}^n \le 0, \quad \forall j<n, n\ge 1,\\
& \sum_{j=1}^n b_{n-j}^n \ge 0, \quad \forall n\ge 1.
\end{split}
\end{gather} 
\end{assumption}

For such discretizations, one has the following observation which might be used in applications. The proof uses the signs in $A^{(-1)}$ and is similar to those in \cite[Theorem 3]{li2019discretization} and \cite[Proposition 2.2]{li2021complete}. We thus omit the proof.
\begin{lemma}\label{lmm:convexfunctional}
Suppose $H$ is a Hilbert space with inner product $\langle\cdot, \cdot\rangle$ and $\varphi: H \to \R$ is a convex function. If the kernel $B=C\pc L^{(-1)}$ for $\cD_{\tau}^{\alpha}$ satisfies \eqref{eq:Bcondition}, then
\begin{gather}
\cD_{\tau}^{\alpha}\varphi(u_n)\le \langle \varphi'(u_n), \cD_{\tau}^{\alpha}u_n\rangle.
\end{gather}
In particular, if $u_n\neq 0$, then one has
\[
\cD_{\tau}^{\alpha}\|u_n\| \le \left\langle \frac{u_n}{\|u_n\|}, \cD_{\tau}^{\alpha}u_n \right\rangle.
\]
\end{lemma}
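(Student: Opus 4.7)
The plan is to rewrite everything through the inverse kernel $B = A^{(-1)}$, so that the sign structure of Assumption~\ref{ass:cpinverse} becomes directly applicable. Since $\nabla_\tau u_j = (L^{(-1)} \pc (u - u_0))_j$ and $B = C \pc L^{(-1)}$ by \eqref{eq:inverseformula}, one has the parallel representations
\begin{equation*}
\cD_\tau^\alpha u_n = \sum_{j=1}^n b_{n-j}^n (u_j - u_0), \qquad \cD_\tau^\alpha \varphi(u_n) = \sum_{j=1}^n b_{n-j}^n \bigl(\varphi(u_j) - \varphi(u_0)\bigr),
\end{equation*}
the second one coming from applying the same identity to the sequence $\{\varphi(u_n)\}$. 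Introducing the linearization remainder
\begin{equation*}
\psi(u) := \varphi(u) - \varphi(u_n) - \langle \varphi'(u_n), u - u_n \rangle,
\end{equation*}
which is nonnegative on $H$ by convexity and vanishes at $u_n$, a direct subtraction collapses the gap to
\begin{equation*}
\cD_\tau^\alpha \varphi(u_n) - \langle \varphi'(u_n), \cD_\tau^\alpha u_n\rangle = \sum_{j=1}^n b_{n-j}^n \bigl(\psi(u_j) - \psi(u_0)\bigr).
\end{equation*}

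The next step is to use $\psi(u_n)=0$ to eliminate the $j=n$ contribution and regroup, rewriting the right-hand side as
\begin{equation*}
-\Bigl(\sum_{j=1}^n b_{n-j}^n\Bigr)\psi(u_0) + \sum_{j=1}^{n-1} b_{n-j}^n \psi(u_j).
\end{equation*}
By \eqref{eq:Bcondition}, the row sum is nonnegative and each off-diagonal $b_{n-j}^n$ with $j<n$ is nonpositive, while $\psi(u_0), \psi(u_j) \ge 0$; hence both summands are $\le 0$, which is the claimed inequality.

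For the norm statement I would apply the result to $\varphi = \|\cdot\|$. This function is convex but not differentiable at $0$; however, $u_n/\|u_n\|$ is a subgradient at any $u_n \ne 0$ (Cauchy--Schwarz gives $\|v\| \ge \langle u_n/\|u_n\|, v\rangle$), and the subgradient inequality is the only property of $\varphi'(u_n)$ the argument actually invokes, so the proof transfers verbatim. I would expect no real obstacle beyond the initial conceptual move: passing from the consecutive increments $\nabla_\tau u_j$---which convexity would only control in terms of $\varphi'(u_j)$, the \emph{wrong} base point---to the anchored differences $u_j - u_0$ produced by $B$, after which the sign pattern of the inverse kernel aligns exactly with the nonnegativity of $\psi$ term by term.
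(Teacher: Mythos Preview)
Your proof is correct and follows exactly the approach the paper indicates: the paper omits the proof but states that it ``uses the signs in $A^{(-1)}$,'' which is precisely what your argument does---rewriting $\cD_\tau^\alpha$ through $B=A^{(-1)}$ so that the diagonal/off-diagonal sign pattern and the nonnegative row sums in \eqref{eq:Bcondition} pair cleanly with the nonnegativity of the Bregman remainder $\psi$. Your treatment of the norm case via subgradients is also the right way to handle the non-differentiability at $0$.
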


One may consider the resolvent $R_{\lambda}$ defined using the pseudo-convolution similar to the continuous case:
\begin{gather}\label{eq:disreldef}
R_{\lambda}+\lambda R_{\lambda}\pc A=\lambda A \quad
\Longleftrightarrow \quad
A-R_{\lambda}\pc A=\frac{1}{\lambda}R_{\lambda}.
\end{gather}
It has been proved in \cite[Theorem 5.1]{fengli2023a} that
\begin{proposition}
The array kernel $A$ satisfies Assumption \ref{ass:cpinverse} if and only if for all $\lambda>0$, $R_{\lambda}$ has nonnegative entries, $0<(R_{\lambda})_0^n<1$
and $\sum_{j=1}^n (R_{\lambda})_{n-j}^n\le 1$.
\end{proposition}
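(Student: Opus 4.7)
The plan is to exploit the algebraic rewriting of the defining relation for $R_\lambda$. Multiplying the identity $R_\lambda + \lambda R_\lambda \pc A = \lambda A$ on the right by $B = A^{(-1)}$ and using associativity of $\pc$ gives
\[
R_\lambda \pc (B + \lambda I) = \lambda I,
\]
and by property (b) of the pseudo-convolution group also $(B+\lambda I)\pc R_\lambda = \lambda I$. In particular, reading off the diagonal of either identity immediately yields the explicit formula $(R_\lambda)_0^n = \lambda/(b_0^n + \lambda)$, which will be the common starting point for both directions.

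For the forward direction ($\Rightarrow$), I would use $(B+\lambda I)\pc R_\lambda = \lambda I$ at offset $n-j>0$, which unfolds to the recursion
\[
(b_0^n + \lambda)(R_\lambda)_{n-j}^n = -\sum_{k=j}^{n-1} b_{n-k}^n (R_\lambda)_{k-j}^k.
\]
Under Assumption \ref{ass:cpinverse} the factors $b_{n-k}^n$ with $k<n$ are nonpositive and $b_0^n+\lambda>0$, so nonnegativity of $(R_\lambda)_{n-j}^n$ follows by induction on the offset. The bound $0<(R_\lambda)_0^n<1$ is immediate from the closed form together with $b_0^n>0$. For the row-sum estimate, I would sum the $n$-th row identity over $j = 1, \dots, n$ to obtain
\[
(b_0^n+\lambda)\, s_n + \sum_{k=1}^{n-1} b_{n-k}^n\, s_k = \lambda, \qquad s_n := \sum_{j=1}^n (R_\lambda)_{n-j}^n,
\]
and run an induction: assuming $s_k\le 1$ for $k<n$ and using $b_{n-k}^n\le 0$ with $\sum_{k=1}^n b_{n-k}^n\ge 0$, the right-hand side is bounded by $\lambda + b_0^n$, giving $s_n\le 1$.

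For the converse ($\Leftarrow$), the natural move is to take $\lambda\to\infty$ in $R_\lambda = \lambda(B+\lambda I)^{(-1)}$. Since each row of a lower-triangular array kernel involves only finitely many entries, the Neumann-type expansion
\[
R_\lambda = \bigl(I + B/\lambda\bigr)^{(-1)} = I - \lambda^{-1} B + \lambda^{-2} B\pc B - \cdots
\]
is well-defined at each fixed offset for $\lambda$ large enough, giving $(R_\lambda)_{n-j}^n = \delta_{nj} - \lambda^{-1} b_{n-j}^n + O(\lambda^{-2})$. Entrywise comparison with the three standing hypotheses then reads off all three sign conditions of Assumption \ref{ass:cpinverse}: $(R_\lambda)_0^n<1$ forces $b_0^n>0$; nonnegativity of $(R_\lambda)_{n-j}^n$ for $j<n$ at leading order in $1/\lambda$ forces $b_{n-j}^n\le 0$; and $s_n\le 1$ combined with the expansion $s_n \sim 1 - \lambda^{-1}\sum_{j=1}^n b_{n-j}^n$ forces $\sum_{j=1}^n b_{n-j}^n\ge 0$.

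The main obstacle I expect is the converse, specifically making the asymptotic $\lambda\to\infty$ argument fully rigorous within the pseudo-convolution calculus: one must justify that the Neumann series truncated at each row genuinely captures the claimed $O(\lambda^{-2})$ remainder uniformly in the row index up to any prescribed $n$. This is essentially book-keeping because of lower-triangularity, but must be stated carefully so that the sign extraction is valid rather than merely suggestive.
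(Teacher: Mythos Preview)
The paper does not actually prove this proposition; it is quoted from \cite[Theorem 5.1]{fengli2023a}. So there is no in-paper argument to compare against, and your proposal must be judged on its own.

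Your approach is correct and essentially the natural one. The forward direction is clean: the identity $(B+\lambda I)\pc R_\lambda=\lambda I$ gives the closed form for the diagonal and the recursion for the off-diagonals, and the induction on the row index $n$ (not on the offset, since the recursion calls entries $(R_\lambda)_{k-j}^k$ with $k<n$) goes through exactly as you describe. The row-sum computation is also right after interchanging the double sum.

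For the converse, your worry about the Neumann expansion is well placed but, as you note, resolvable by lower-triangularity: for each fixed position $(n,j)$ the entry of $(I+B/\lambda)^{(-1)}$ is a finite rational expression in $1/\lambda$ and the finitely many $b_{k-\ell}^k$ with $j\le\ell\le k\le n$, so the expansion $R_\lambda=I-\lambda^{-1}B+O(\lambda^{-2})$ is genuinely valid entrywise (this is precisely Lemma~\ref{lmm:resolventprop}). One small gap you should close explicitly: Assumption~\ref{ass:cpinverse} includes the clause that $A$ is invertible, so in the converse you must first extract $a_0^n>0$ before invoking $B$ or the Neumann series. This follows directly from the hypothesis $0<(R_\lambda)_0^n$ together with the diagonal of the defining relation, $(R_\lambda)_0^n(1+\lambda a_0^n)=\lambda a_0^n$, which forces $a_0^n>0$ for all $n$; then $b_0^n=1/a_0^n>0$ and the rest of your argument applies. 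With that addition, the proposal is complete.
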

This is the discrete analogue of the complete positivity of the continuous kernels studied in \cite{clement1981asymptotic}, namely those with nonnegative resolvents. Hence, we say a discretization satisfying Assumption \ref{ass:cpinverse} is {\it completely positive}. The following may be used sometimes.
\begin{lemma}
If $a_{j-1}^{n-1}\ge a_j^n$ for all $n\ge 2$ and $j\le n-1$, 
and the inverse $B$ satisfies 
\[
b_0^n>0, \quad b_{n-j}^n \le 0, \quad \forall j<n, n\ge 1,
\]
then $A$ is completely positive.
\end{lemma}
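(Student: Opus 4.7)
The plan is to verify the only missing piece of Assumption \ref{ass:cpinverse}, namely the row-sum condition $\sigma_n := \sum_{j=1}^n b_{n-j}^n \ge 0$; the sign conditions on $b_0^n$ and on the off-diagonal entries are already part of the hypothesis. The key algebraic observation is that if $\mathbf{1} = (1,1,\ldots)$ is the all-ones vector, then $(B \pc \mathbf{1})_n = \sigma_n$, so by associativity of $\pc$
\[
A \pc \sigma \;=\; (A \pc B) \pc \mathbf{1} \;=\; I \pc \mathbf{1} \;=\; \mathbf{1},
\]
which unpacks into the clean identity $\sum_{j=1}^n a_{n-j}^n \sigma_j = 1$ for every $n \ge 1$. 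From $(A \pc B)_0^n = 1$ one also reads off $a_0^n b_0^n = 1$, so $a_0^n = 1/b_0^n > 0$.

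The proof then proceeds by induction on $n$. The base case is immediate: $\sigma_1 = b_0^1 = 1/a_0^1 > 0$. For the inductive step, the first move is to reinterpret the hypothesis $a_{j-1}^{n-1} \ge a_j^n$ as a column monotonicity in the array \eqref{eq:arraykernel}: the entries $a_{j-1}^{n-1}$ and $a_j^n$ both sit in column $n-j$, at rows $n-1$ and $n$ respectively, so the condition says that along each column the entries are nonincreasing downward. After an index relabeling this is equivalent to $a_{n-j}^n \le a_{(n-1)-j}^{n-1}$ for $1 \le j \le n-1$. Combining this termwise bound with the inductive hypothesis $\sigma_j \ge 0$ for $j < n$ and the identity at level $n-1$, I obtain
\[
a_0^n \sigma_n \;=\; 1 - \sum_{j=1}^{n-1} a_{n-j}^n \sigma_j \;\ge\; 1 - \sum_{j=1}^{n-1} a_{(n-1)-j}^{n-1} \sigma_j \;=\; 0,
\]
and since $a_0^n > 0$ this forces $\sigma_n \ge 0$, closing the induction.

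The argument really has a single moving part, so there is not much to flag as a serious obstacle; the cleanest potential pitfall is purely notational, namely translating the diagonal-shift hypothesis $a_{j-1}^{n-1}\ge a_j^n$ into the same-column form $a_{n-j}^n \le a_{(n-1)-j}^{n-1}$ so that the summation indices line up with the identity $\sum_{j=1}^n a_{n-j}^n \sigma_j = 1$ at two consecutive levels. Once that alignment is in place, no further analytic input beyond the sign of $a_0^n$ (which comes for free from $b_0^n > 0$) is required.
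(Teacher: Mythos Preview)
Your proof is correct. The identity $A \pc \sigma = \mathbf{1}$ (equivalently $\sum_{j=1}^n a_{n-j}^n \sigma_j = 1$) together with the column-monotonicity reading of the hypothesis gives a clean induction, and the only sign input you need beyond the inductive hypothesis is $a_0^n = 1/b_0^n > 0$, which you read off directly from $(A\pc B)_0^n=1$.

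The paper takes a slightly different route. It first invokes an external result (\cite[Lemma 4.3]{fengli2023a}) to conclude that the sign pattern on $B$ forces $A$ to have nonnegative entries. It then notes that the hypothesis $a_{j-1}^{n-1}\ge a_j^n$ is precisely the statement that $C_R^{-1} = L^{(-1)}\pc A$ has nonpositive off-diagonals (with positive diagonal $a_0^n$); applying the same M-matrix-type lemma once more yields $C_R = A^{(-1)}\pc L \ge 0$, and since the first column of $C_R$ is exactly $(\sigma_n)_{n\ge 1}$, the row-sum condition follows. Your argument is more self-contained: it avoids the external lemma and never needs the intermediate fact $A\ge 0$. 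The paper's route, by contrast, situates the lemma within the complementary-kernel framework set up earlier and makes explicit the link to the remark that column monotonicity of $A$ corresponds to sign conditions on $C_R^{-1}$.
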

In fact, if $b_0^n>0$ and $b_{n-j}^n\le 0$ for all $j<n$, $A$ must have nonnegative entries by \cite[Lemma 4.3]{fengli2023a}. Then, together with the fact that
 $a_{j-1}^{n-1}\ge a_j^n$, one can easily verify that $C=A^{(-1)}\pc L$ is nonnegative.

The resolvent kernels have the following simple facts as proved in \cite{fengli2023a}. 
\begin{lemma}\label{lmm:resolventprop}
Suppose the diagonal elements of $A$ are positive.  The resolvent kernel $R_{\lambda}$  always exists for $\lambda>0$ and  $R_{\lambda}\pc A=A\pc R_{\lambda}$.
Moreover, as $\lambda\to \infty$:
\[
R_{\lambda}=I-\lambda^{-1}A^{(-1)}+O(\lambda^{-2}),
\]
where the $O(\lambda^{-2})$ is elementwise.
\end{lemma}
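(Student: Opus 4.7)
The plan is to derive a closed form for $R_{\lambda}$ in terms of $A$ and $A^{(-1)}$, and then read off all three conclusions from it.

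First, I would rewrite the defining equation \eqref{eq:disreldef} as
\[
R_{\lambda}\pc(I+\lambda A)=\lambda A,
\]
using the distributivity of $\pc$ and the fact that $I$ is the identity for $\pc$. Since $A$ has strictly positive diagonal, $I+\lambda A$ is a lower-triangular array with diagonal entries $1+\lambda a_0^n>0$; such arrays are invertible under $\pc$ (one inverts the top-left $n\times n$ block, which is a lower-triangular matrix with nonzero diagonal, and the usual entries of the inverse agree on any larger block). This gives existence of $R_{\lambda}$ via
\begin{equation*}
R_{\lambda}=\lambda A\pc (I+\lambda A)^{(-1)}.
\end{equation*}

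For the commutativity $R_{\lambda}\pc A=A\pc R_{\lambda}$, I would use that $I+\lambda A$ is a polynomial in $A$ (under $\pc$), and therefore commutes with $A$; the same is then true of its $\pc$-inverse, by the elementary identity $A\pc (I+\lambda A)^{(-1)}=(I+\lambda A)^{(-1)}\pc A$ obtained from multiplying $A\pc (I+\lambda A)=(I+\lambda A)\pc A$ on both sides by $(I+\lambda A)^{(-1)}$. Plugging this into the closed form for $R_{\lambda}$ immediately gives $R_{\lambda}\pc A=A\pc R_{\lambda}$.

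For the asymptotic expansion, the key algebraic trick is to factor
\[
I+\lambda A=\lambda A\pc\bigl(I+\lambda^{-1}A^{(-1)}\bigr),
\]
which is a direct computation using $A\pc A^{(-1)}=I$. Taking $\pc$-inverses and substituting into the closed form for $R_{\lambda}$ (and using commutativity to cancel $\lambda A\pc(\lambda A)^{(-1)}=I$) yields
\[
R_{\lambda}=\bigl(I+\lambda^{-1}A^{(-1)}\bigr)^{(-1)}.
\]
To extract the expansion at a fixed entry $(n,j)$, I would restrict to the top-left $n\times n$ block, where everything becomes a finite matrix identity. Writing $M_n:=\lambda^{-1}[A^{(-1)}]_{1:n,1:n}$, for $\lambda$ large enough (how large depends on $n$) we have $\|M_n\|<1$ and the Neumann series gives
\[
(I+M_n)^{-1}=I-M_n+M_n^2(I+M_n)^{-1},
\]
which translates back into the claimed elementwise bound $R_{\lambda}=I-\lambda^{-1}A^{(-1)}+O(\lambda^{-2})$.

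The main subtlety is the infinite-array setting: one must check that the closed-form identity $R_{\lambda}=(I+\lambda^{-1}A^{(-1)})^{(-1)}$ and the Neumann expansion, which are manipulated as if in a matrix algebra, are legitimate at each entry. This is handled by noting that for any fixed $(n,j)$ only the top-left $n\times n$ truncations are involved, and on that finite block all manipulations reduce to standard invertible-matrix identities; the truncation is consistent because pseudo-convolution respects the lower-triangular block structure. Once this is in place, all three statements of the lemma follow.
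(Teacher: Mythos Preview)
Your argument is correct. The paper does not actually supply a proof of this lemma; it simply cites \cite{fengli2023a}. So there is nothing to compare your approach against directly, and your write-up provides a self-contained justification where the paper only gives a reference.

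A few small comments on presentation. The condition $\|M_n\|<1$ is not needed for the algebraic identity $(I+M_n)^{-1}=I-M_n+M_n^2(I+M_n)^{-1}$, which holds for any invertible $I+M_n$; what you actually use it for is to bound $(I+M_n)^{-1}$ uniformly as $\lambda\to\infty$, so that $M_n^2(I+M_n)^{-1}=O(\lambda^{-2})$ entrywise. It would read more cleanly to state the identity first and invoke the norm condition only for the asymptotic bound. Also, the commutativity step relies on property~(b) listed in the paper (that a one-sided $\pc$-inverse is two-sided), which you use implicitly when asserting $A\pc A^{(-1)}=A^{(-1)}\pc A=I$; it might be worth making that dependence explicit. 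Otherwise the closed form $R_{\lambda}=(I+\lambda^{-1}A^{(-1)})^{(-1)}$ is the natural route and your reduction to finite blocks handles the infinite-array issue correctly.
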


Below we look at examples of discretizations on nonuniform meshes. We say a discretization is completely positive if the corresponding kernel $A$ is completely positive (note that the schemes for differential form and the integral form are equivalent by Lemma \ref{lmm:schemeequiv}).
 The L1 scheme \cite{lin2007finite,stynes2017error} is the most popular and simplest discretization, obtained by piecewise linear interpolation of the derivative $u$ in the differential form, which can be written as
\begin{gather}\label{eq:L1a}
D^{\alpha}u(t_n) \approx \cD_{\tau}^{\alpha}u_n:=C\pc \nabla_{\tau} u_n=C\pc L^{(-1)}\pc (u-u_0)_n,
\end{gather}
where
\begin{gather}\label{eq:L1b}
c_{n-j}^n=\frac{1}{\tau_j\Gamma(1-\alpha)}\int_{t_{j-1}}^{t_j}(t_n-s)^{-\alpha}\,ds.
\end{gather}
It can be verified directly that $B^{(-1)}:=C\pc L^{(-1)}$ satisfies \eqref{eq:Bcondition} so that $A$ is completely positive.
Hence, the  $L1$ scheme is completely positive.

One can also approximate the integral formulation. 
In \cite[Theorem 6.2]{feng2023class}, the following integral scheme
\begin{gather}\label{eq:fodeintegralscheme}
u_n=u_0+\sum_{j=1}^n a_{n-j}^n f(t_j, u_j), \quad a_{n-j}^n=\frac{1}{\Gamma(\alpha)}\int_{t_{j-1}}^{t_j}(t_n-s)^{-\alpha}\,ds
\end{gather}
is proposed as approximation and has been proved to be completely positive.

For uniform schemes, all the CM-preserving schemes discussed in \cite{li2021complete} are completely positive. This includes the standard Gr\"unwald-Letnikov (GL) scheme, the convolutional quadrature (CQ) with $\theta$-method etc. The GL and CQ methods are not easy to generalize to nonuniform meshes. The requirement on completely positive schemes is quite reasonable. Some related discussions can be found in \cite{liao2019discrete} and \cite{liao2023discrete}. The schemes considered in \cite{liao2019discrete} are a subclass of completely positive schemes.  Moreover, the second order Crank-Nicolson scheme with the $L1^+$ discretization can also be changed into a modified completely positive scheme, as we shall see in section \ref{subsec:numericalcp}.
\begin{remark}
The completely positive schemes may have some order barrier, which means that the high order schemes cannot be completely positive. Often, in these high order schemes, only the first few $b_j^n$ terms (like $j=1,2$) cause issues. It is possible to decompose the kernel for a high order scheme into a completely positive kernel and the kernel for some local difference operator. We will leave this for future study.
\end{remark}

\section{The comparison principles}\label{sec:comparisonprinciple}
In this section, we establish some comparison principles. First, we give an alternative proof for continuous problems based on the resolvents. Then, we generalize the proof to discrete schemes.

\subsection{Comparison principles for time continuous case}

\begin{theorem}\label{thm:contcomp}
Suppose $f$ is continuous and locally Lipschitz in $u$. Let $y$ and $z$ be two continuous functions satisfying
\[
D_c^{\alpha}y\ge f(t, y), \quad D_c^{\alpha}z\le f(t, z)
\]
in the distributional sense, with $y(0)\ge z(0)$, then $y(t)\ge z(t)$ on the common interval of existence. If $y(0)>z(0)$, then $y(t)>z(t)$ on the interval considered. 
\end{theorem}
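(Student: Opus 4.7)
My plan is to reduce the comparison to a fractional Gr\"onwall inequality for the negative part $w_- := \max(z-y, 0)$ via a convex functional inequality, and close it by iteration using positivity of $g_\alpha$. Set $w = y - z$; by distributional linearity, $D_c^{\alpha} w \geq f(t,y) - f(t,z)$ with $w(0) \geq 0$. On any compact sub-interval $[0, T_0]$ of the common interval of existence, continuity of $y, z$ confines their values to a compact set, so local Lipschitzness of $f$ in $u$ yields a constant $M$ with $f(t,y(t)) - f(t,z(t)) = \ell(t) w(t)$ for some measurable $\ell$ obeying $|\ell(t)| \leq M$.

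I would then invoke the continuous counterpart of Lemma \ref{lmm:convexfunctional}, which holds for merely continuous functions in the generalized Caputo framework of \cite{fllx18note,liliu18frac}, with the convex function $\varphi(x) = (\max(-x, 0))^2$ (so $\varphi'(x) = -2 x_-$). This gives, in the distributional sense, $D_c^{\alpha} w_-^2 \leq -2 w_- D_c^{\alpha} w \leq -2 \ell w_- w$; on $\{w < 0\}$ we have $w = -w_-$, so the right side equals $2\ell w_-^2 \leq 2M w_-^2$, and on $\{w \geq 0\}$ the bound is trivial. Thus $D_c^{\alpha} w_-^2 \leq 2M w_-^2$ with $w_-^2(0) = 0$ since $w(0) \geq 0$. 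Convolving against the nonnegative kernel $g_\alpha$ gives the Volterra inequality $w_-^2(t) \leq 2M (g_\alpha * w_-^2)(t)$; iterating $n$ times and using the group property $g_\alpha^{*n} = g_{n\alpha}$ yields $w_-^2(t) \leq (2M)^n t^{n\alpha}/\Gamma(n\alpha+1) \cdot \sup_{[0,t]} w_-^2 \to 0$ as $n\to\infty$. Hence $w_- \equiv 0$ on $[0, T_0]$, i.e., $y \geq z$. This closing step is precisely the order-preserving nature of the operator $(I - 2M g_\alpha *)^{-1}$, which is the resolvent fact the argument ultimately relies on.

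For the strict case $y(0) > z(0)$, I would compare $w$ to the explicit solution $\phi(t) := w(0) E_\alpha(-Mt^\alpha)$ of $D_c^{\alpha}\phi = -M \phi$ with $\phi(0) = w(0) > 0$. Since the nonstrict case gives $w \geq 0$, Lipschitzness yields $D_c^{\alpha} w \geq -Mw$, so $D_c^{\alpha}(\phi - w) \leq -M(\phi - w)$ with $(\phi - w)(0) = 0$; applying the convex-functional-plus-iteration argument to $(\phi - w)_+^2$ then gives $\phi \leq w$, so $w(t) \geq w(0) E_\alpha(-Mt^\alpha) > 0$. The main technical obstacle I anticipate is rigorously justifying the convex functional inequality under the minimal regularity assumed ($y, z$ continuous and the fractional inequality only in distribution), which forces one to work with the generalized Caputo derivative of Definition \ref{def:caputo} and its distributional calculus from \cite{liliu18frac,fllx18note} rather than any pointwise identity at an extremum.
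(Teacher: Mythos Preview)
Your route differs substantially from the paper's. You pass through the convex-functional inequality $D_c^{\alpha}(w_-)^2 \le -2w_-\,D_c^{\alpha}w$ and then close by iterating against $g_\alpha$ via $g_\alpha^{*n}=g_{n\alpha}$. The paper instead convolves $D_c^{\alpha}u\ge h(t)u$ directly with the nonnegative resolvent kernel $\lambda^{-1}r_\lambda$, using the identity $\lambda^{-1}r_\lambda*D_c^{\alpha}u=(u-u_0)-r_\lambda*(u-u_0)$ to obtain
\[
u(t)\ge u_0\,s_\lambda(t)+\int_0^t r_\lambda(t-s)\bigl(1+h(s)/\lambda\bigr)u(s)\,ds,
\]
and concludes by taking $\lambda$ large (so that $1+h/\lambda>0$) together with, in the case $u_0=0$, a short first-crossing argument. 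This needs only that convolution with a nonnegative kernel preserves distributional inequalities---no chain rule, no squaring---so it applies immediately to merely continuous $y,z$. The obstacle you yourself flag, namely making $D_c^{\alpha}(w_-)^2 \le -2w_-\,D_c^{\alpha}w$ rigorous when $w$ is only continuous and $D_c^{\alpha}w$ is a distribution (so that the right-hand side is a product of a continuous function with a measure), is precisely what the resolvent argument is engineered to bypass; as your proposal stands that step is a genuine gap, and a continuous analogue of Lemma~\ref{lmm:convexfunctional} at that level of regularity is not something one can simply cite.

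There is also a structural reason the paper prefers its route: the whole purpose of this proof of Theorem~\ref{thm:contcomp} is to furnish a template that transfers to completely positive discretizations on nonuniform meshes (Theorems~\ref{thm:implicitcp}--\ref{thm:cmpweighted}), with $r_\lambda$ replaced by the array resolvent $R_\lambda$ and $*$ by the pseudo-convolution $\bar{*}$. Your closing iteration leans on the semigroup identity $g_\alpha^{*n}=g_{n\alpha}$, which has no analogue for the kernel array $A$ on a nonuniform mesh, so even after invoking the discrete convex inequality (Lemma~\ref{lmm:convexfunctional}) the iteration would not port over. Your observation that the iteration encodes positivity of $(I-2M g_\alpha*)^{-1}$ is on target---but the paper exploits resolvent positivity one step earlier, on $u$ itself rather than on $w_-^2$, and it is that earlier placement that makes the argument both regularity-free and discretization-ready.
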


This result is in fact known. One can refer to \cite[Theorem 2.3]{ramirez2012generalized}, or \cite[Theorem 2.2]{fllx18note} for the generalized version with less regularity.  However, the proof there relies on the continuity of the continuous functions, which is not applicable for discrete sequences. We provide another proof using the resolvents, which can then be generalized the discrete schemes.

By the definition \eqref{eq:capgen},  $D_c^{\alpha}u=g_{-\alpha}*(\theta(t)(u-u_0))$.
Using the definition of the resolvent, we find that  $r_{\lambda}*g_{-\alpha}=\lambda(\delta-r_{\lambda})$. Hence,  one has 
\begin{gather}\label{eq:resolcap}
\lambda^{-1}r_{\lambda}*D_c^{\alpha}u=\theta(t)(u-u_0)-r_{\lambda}*(u-u_0).
\end{gather}
If $u$ is absolutely continuous, one can see this more clearly. In fact, $g_{-\alpha}*(\theta(t)(u-u_0))
=g_{1-\alpha}*(\theta u')$. Here, $g_{1-\alpha}$ is the complementary kernel of $g_{\alpha}$
in the sense that $g_{1-\alpha}*g_{\alpha}=\theta(t)=1_{t\ge 0}$. It can then be derived from the definition of the resolvent that
\begin{gather}
r_{\lambda}*g_{1-\alpha}=\lambda s_{\lambda}=\lambda (1_{t\ge 0}-r_{\lambda}*1_{t\ge 0}).
\end{gather}

The main observation is that $r_{\lambda}$ is nonnegative so that the inequality can be preserved if we convolving both sides with $\lambda^{-1}r_{\lambda}$. 

\begin{proof}[Proof of Theorem \ref{thm:contcomp}]
Define $u:=y-z$. Taking the difference between the two relations, one has (in the distributional sense) that
\[
D_c^{\alpha}u\ge h(t) u(t), \quad
h(t)=\int_0^1 \partial_uf(\eta y+(1-\eta)z)d\eta.
\]
Convolving both sides with $\lambda^{-1}r_{\lambda}$, which is nonnegative, one then has
\[
(u-u_0)-r_{\lambda}*(u-u_0)\ge \lambda^{-1}r_{\lambda}*(hu).
\]
This implies that
\begin{gather}\label{eq:compineq}
u(t)\ge (u_0-r_{\lambda}*u_0)+r_{\lambda}*(u+\lambda^{-1} h u)
=u_0s_{\lambda}+\int_0^t r_{\lambda}(t-s)(1+h(s)/\lambda)u(s)\,ds.
\end{gather}

If $u_0>0$, one can see that $u(t)>0$ for $t$ small enough. Then, 
for all $t>0$ considered, we can take $\lambda$ large enough such that $1+h(s)/\lambda>0$ on $[0, t]$, and \eqref{eq:compineq} implies that $u(t)>0$.

Consider $u_0=0$. If $u(t)<0$ somewhere, we set $t_1\ge 0$ to be the first time when $u(t)<0$ on $(t_1, t_1+\e)$ for some $\e>0$. Then, $u(t_1)=0$. It is clear that $u(t)=0$ on $[0, t_1]$, otherwise \eqref{eq:compineq} would yield a contradiction by setting $t=t_1$ and $\lambda$ large enough. Take $\lambda$ large enough such that $1+h(s)/\lambda>0$ on $(t_1, t_1+\e)$ and set
\[
A:=\sup_{s\in (t_1, t_1+\e)} 1+h(s)/\lambda.
\]
Take $\delta\le \e$ such that $\int_0^{\delta}r_{\lambda}(s)\,ds\le 1/(2A)$.
Moreover, let
\[
t_2=\mathrm{argmin}_{s\in [t_1, t_1+\delta]}u(s)\in (t_1, t_1+\delta].
\]
Then, $u(t_2)<0$ and
\[
u(t_2)\ge \int_{t_1}^{t_2} r_{\lambda}(t_2-s)(1+h(s)/\lambda)u(s)\,ds
\ge Au(t_2)\int_{t_1}^{t_2} r_{\lambda}(t_2-s)\,ds
\ge u(t_2)/2.
\]
This is then a contradiction. Hence, the claim is proved.
\end{proof}

If the inequality is given in integral form, the technique here does not apply, since $\delta-r_{\lambda}$ is not nonnegative. 
 Currently, the comparison principle is known to hold for integral forms only when the function $f(t, \cdot)$ is nondecreasing as in \cite{liliu18frac}. Whether it can be generalized to general $f$  is an interesting question.

\subsection{Comparison principles for numerical schemes}\label{subsec:numericalcp}

The argument in \cite[Theorem 2.3]{ramirez2012generalized} or \cite[Theorem 2.2]{fllx18note} cannot be generalized to discrete schemes easily as it relies on the continuity. Motivated by the proof above based on resolvents and the pseudo-convolution, we are then able to establish a series of comparison principles for the discrete schemes.

The following is for the implicit scheme ($\theta=1$).
\begin{theorem}\label{thm:implicitcp}
Suppose that the discretization is completely positive. If $y_0\ge z_0$ and
\[
\cD_{\tau}^{\alpha} y_n\ge f(t, y_n),
\quad \cD_{\tau}^{\alpha} z_n\le f(t, z_n), \forall n\ge 1,
\]
then the following comparison principle holds. 
\begin{enumerate}[(1)]
\item If $f(t,\cdot)$ is nonincreasing for all $t$, then $y_n \ge z_n$ for all $n$.  If $C$ has positive entries, then $y_0>z_0$ implies that $y_n>z_n$.
\item Suppose that $f(t,\cdot)$ is uniformly Lipschitz such that $|\partial_u f(t, u)|\le M$ for all $t,u$, 
and the discretization satisfies the stability condition $M/c_0^n<1$, then one has $y_n\ge z_n$.  If $C$ has positive entries, then $y_0>z_0$ implies that $y_n>z_n$.
\end{enumerate}
\end{theorem}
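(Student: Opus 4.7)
The plan is to mimic the continuous argument of Theorem \ref{thm:contcomp} using pseudo-convolutions and the discrete resolvent $R_\lambda$ defined in \eqref{eq:disreldef}. Set $u_n := y_n - z_n$ and
\[
h_n := \int_0^1 \partial_u f\bigl(t_n,\, \eta y_n + (1-\eta)z_n\bigr)\,d\eta,
\]
so that subtracting the two discrete inequalities gives the elementwise relation $A^{(-1)} \pc (u - u_0) \ge h\, u$, where $(u-u_0)_n = u_n - u_0$ for $n \ge 1$ and $(h u)_n = h_n u_n$. Right-pseudo-convolving the defining relation $R_\lambda + \lambda R_\lambda \pc A = \lambda A$ by $A^{(-1)}$ produces the key identity $R_\lambda \pc A^{(-1)} = \lambda(I - R_\lambda)$. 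Since $R_\lambda$ has nonnegative entries by complete positivity, I apply $\lambda^{-1} R_\lambda \pc (\cdot)$ to the elementwise inequality (preserving it) and use associativity to obtain
\begin{equation}\label{eq:plankey}
u_n \;\ge\; u_0(1 - S_n) \;+\; \sum_{j=1}^n (R_\lambda)_{n-j}^n \left(1 + \frac{h_j}{\lambda}\right) u_j, \qquad S_n := \sum_{j=1}^n (R_\lambda)_{n-j}^n \le 1.
\end{equation}

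Next I induct on $n$. Given $u_j \ge 0$ for $j < n$, I split off the $j=n$ term in \eqref{eq:plankey} to get
\[
u_n \Bigl[1 - (R_\lambda)_0^n \bigl(1 + h_n/\lambda\bigr)\Bigr]
\;\ge\; u_0(1 - S_n) \;+\; \sum_{j=1}^{n-1}(R_\lambda)_{n-j}^n \bigl(1 + h_j/\lambda\bigr) u_j.
\]
In case (1), $h_j \le 0$ for every $j$, so choosing $\lambda > \max_{j \le n}|h_j|$ makes $1 + h_j/\lambda \in [0,1]$ and bounds the bracket on the left below by $1 - (R_\lambda)_0^n > 0$ (which holds for every $\lambda > 0$ by complete positivity); every term on the right is then nonnegative, giving $u_n \ge 0$. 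In case (2), the asymptotic expansion from Lemma \ref{lmm:resolventprop} gives
\[
(R_\lambda)_0^n \bigl(1 + h_n/\lambda\bigr) \;=\; 1 + \frac{h_n - c_0^n}{\lambda} + O(\lambda^{-2}),
\]
which is strictly less than $1$ for $\lambda$ large, thanks to the stability assumption $|h_n| \le M < c_0^n$; the same choice of $\lambda$ ensures $1 + h_j/\lambda > 0$ for all $j \le n$, so the right side of the isolated inequality is again nonnegative and $u_n \ge 0$.

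For the strict statement, suppose $u_0 > 0$ and that $C$ has strictly positive entries. By \eqref{eq:inverseformula} and telescoping, $\sum_{j=1}^n b_{n-j}^n = c_{n-1}^n > 0$, so Lemma \ref{lmm:resolventprop} yields
\[
1 - S_n \;=\; \lambda^{-1} \sum_{j=1}^n b_{n-j}^n + O(\lambda^{-2}) \;>\; 0
\]
for $\lambda$ sufficiently large. Hence $u_0(1 - S_n) > 0$ injects a strictly positive contribution into \eqref{eq:plankey}, and the same isolation step promotes $u_n > 0$. The main obstacle is calibrating a single $\lambda$ (allowed to depend on $n$) so that three things hold simultaneously: the left-hand bracket $1 - (R_\lambda)_0^n(1 + h_n/\lambda)$ is positive, every factor $1 + h_j/\lambda$ is nonnegative, and in the strict case $1 - S_n > 0$. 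All three are achieved by taking $\lambda$ large enough, exploiting $(R_\lambda)_0^n < 1$ together with either the sign condition $h_j \le 0$ (case 1) or the sharp stability $M < c_0^n$ (case 2).
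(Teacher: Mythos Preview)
Your proposal is correct and follows essentially the same approach as the paper: set $u_n=y_n-z_n$, derive the resolvent identity $R_\lambda\pc A^{(-1)}=\lambda(I-R_\lambda)$, pseudo-convolve the elementwise inequality with $\lambda^{-1}R_\lambda$ to obtain \eqref{eq:plankey}, isolate the $j=n$ term, and close by induction with $\lambda$ chosen large enough (depending on $n$). Your handling of the strict case via $1-S_n=\lambda^{-1}c_{n-1}^n+O(\lambda^{-2})$ is exactly the paper's argument as well.
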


\begin{proof}
Define $u_n=y_n-z_n$. Then, it holds that
\begin{gather}\label{eq:cpaux1}
\cD_{\tau}^{\alpha}u_n \ge h_n u_n,
\end{gather}
where $h_n=\int_0^1 \partial_uf(\eta y_n+(1-\eta)z_n)\,d\eta$.
Recall
\[
\cD_{\tau}^{\alpha}u_n=C\pc(\nabla_{\tau}u_n)=A^{(-1)}\pc u_n.
\]
Taking pseudo-convolution with $\lambda^{-1}R_{\lambda}$ on both sides of \eqref{eq:cpaux1} and noting that $R_{\lambda}$ has nonnegative entries, one has
\begin{gather}\label{eq:auxdiscreteresol}
u_n\ge u_0\Big(1-\sum_{j=1}^n (R_{\lambda})_{n-j}^n \Big)
+R_{\lambda}\pc [(1+h_n/\lambda )u_n].
\end{gather}
Hence,
\[
(1-(R_{\lambda})_0^n(1+h_n/\lambda))u_n \ge 
u_0\left(1-\sum_{j=1}^n (R_{\lambda})_{n-j}^n\right)+\sum_{j=1}^{n-1}(R_{\lambda})_{n-j}^n (1+h_j/\lambda )u_j.
\]
 Assume that $u_j\ge 0$ for $j\le n-1$ has been established.
For fixed $n$, one can choose $\lambda$ large enough such that the coefficients on the right hand side are all nonnegative, and also such that $1-\sum_{j=1}^n (R_{\lambda})_{n-j}^n>0$ if $C$ has positive entries by Lemma \ref{lmm:resolventprop}.

If $f$ is nonincreasing, $1-(R_{\lambda})_0^n-h_n/\lambda>0$ always holds, then simple induction yields that
$u_n\ge 0$ if $u_0\ge 0$.

If $f$ is assumed to be Lipschitz, then by Lemma \ref{lmm:resolventprop}, $(R_{\lambda})_0^n=1-\lambda^{-1}b_0^n+O(\lambda^{-2})$ and $b_0^n=c_0^n$, then
\[
1-(R_{\lambda})_0^n(1+h_n/\lambda)=\lambda^{-1}(c_0^n-h_n)+O(\lambda^{-2}).
\]
Hence, if $c_0^n>M$, one can choose $\lambda$ large enough such that the coefficient is positive. Then, $u_n>0$.
\end{proof}

Taking $f(t, u)\equiv 0$, then $\cD_{\tau}^{\alpha}y_n \ge 0$ implies that $y_n \ge 0$, which gives the following.
\begin{corollary}
For any completely positive discretization, if $\cD_{\tau}^{\alpha}x_n
\ge \cD_{\tau}^{\alpha}y_n$ and $x_0\ge y_0$, then $x_n \ge y_n$.
\end{corollary}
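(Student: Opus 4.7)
The plan is to reduce the claim to the implicit comparison principle in Theorem \ref{thm:implicitcp} by passing to the difference. Set $w_n := x_n - y_n$. Since $\cD_{\tau}^{\alpha}$ is linear in its argument (the operator $\sum_{j=1}^n c_{n-j}^n \nabla_{\tau}$ acts linearly on the sequence), the hypothesis $\cD_{\tau}^{\alpha}x_n \ge \cD_{\tau}^{\alpha}y_n$ is equivalent to
\[
\cD_{\tau}^{\alpha}w_n \ge 0, \qquad w_0 \ge 0.
\]
So it suffices to show that any completely positive discretization preserves nonnegativity in the sense that $\cD_{\tau}^{\alpha}w_n \ge 0$ together with $w_0 \ge 0$ forces $w_n \ge 0$.

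Next I would apply Theorem \ref{thm:implicitcp} with the trivial choice $f(t,u) \equiv 0$, which is both nonincreasing in $u$ and uniformly Lipschitz with constant $M=0$, so either hypothesis (1) or (2) of the theorem is immediately satisfied (and the stability condition $M/c_0^n < 1$ holds trivially). Taking the ``super-solution'' to be $w_n$ and the ``sub-solution'' to be the identically zero sequence, one has
\[
\cD_{\tau}^{\alpha} w_n \ge 0 = f(t, w_n), \qquad \cD_{\tau}^{\alpha} 0 = 0 \le f(t, 0),
\]
with $w_0 \ge 0 = 0$. Theorem \ref{thm:implicitcp} then yields $w_n \ge 0$ for all $n$, i.e., $x_n \ge y_n$.

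There is essentially no obstacle here, since the whole content has already been packaged into Theorem \ref{thm:implicitcp}; the corollary is just the specialization $f\equiv 0$ applied to the difference. The only point worth verifying explicitly is the linearity of $\cD_{\tau}^{\alpha}$, which is immediate from the defining formula \eqref{eq:diffdis}, and the fact that the completely positive assumption on the scheme is exactly the running hypothesis of Theorem \ref{thm:implicitcp}. If desired, one could also bypass Theorem \ref{thm:implicitcp} and give a direct one-line argument: pseudo-convolving $\cD_{\tau}^{\alpha}w_n = A^{(-1)}\pc w_n \ge 0$ with $\lambda^{-1}R_{\lambda}$, using the nonnegativity of $R_{\lambda}$ from the complete positivity assumption and Lemma \ref{lmm:resolventprop}, one obtains $w_n \ge w_0(1 - \sum_{j=1}^n (R_{\lambda})_{n-j}^n) + (R_{\lambda} \pc w)_n$, and induction on $n$ combined with taking $\lambda$ large closes the argument.
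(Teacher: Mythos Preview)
Your proof is correct and matches the paper's approach exactly: the paper derives the corollary in one line by taking $f(t,u)\equiv 0$ in Theorem~\ref{thm:implicitcp}, which is precisely what you do after passing to the difference $w_n=x_n-y_n$. Your additional remark sketching the direct resolvent argument is also fine and simply unpacks the proof of Theorem~\ref{thm:implicitcp} in this special case.
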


Below, we consider weighted implicit schemes  for $\theta\in [0, 1)$. 
\begin{theorem}\label{thm:cmpweighted}
Assume the discretization is completely positive. Suppose $y$ and $z$ are two sequences with $y_0\ge z_0$ and satisfy the following for $\theta\in [0, 1)$:
\begin{gather}
\cD_{\tau}^{\alpha}y_n\ge  f_n^{\theta}[y],\quad  \cD_{\tau}^{\alpha}z_n \le  f_n^{\theta}[z],
\end{gather}
where $f_n^{\theta}[y]$ is either given by \eqref{eq:ftheta1} or \eqref{eq:ftheta2} (with $u_j$ replaced by $y_j$), and $f_n^{\theta}[z]$ is similarly defined.
\begin{enumerate}[(1)]
\item Suppose $f$ is nondecreasing in $u$. If $c_0^n>\theta M$, then $y_n \ge z_n$.

\item If $f(t,\cdot)$ is uniformly Lipschitz with constant $M$, $c_0^n >\theta M$ and $c_0^n-c_1^n\ge (1-\theta)M$, then $y_n \ge z_n$.
Moreover, if $C$ has positive entries,  then $y_0>z_0$ implies that $y_n>z_n$.
\end{enumerate}

\end{theorem}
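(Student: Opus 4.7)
The plan is to adapt the resolvent-based argument of Theorem \ref{thm:implicitcp} to account for the extra $u_{n-1}$ contribution produced by the weighted right-hand side. First, I set $u_n=y_n-z_n$ and, applying the mean value theorem to $f_n^\theta[y]-f_n^\theta[z]$ (for either \eqref{eq:ftheta1} or \eqref{eq:ftheta2}), write
\[
\cD_\tau^\alpha u_n\ge \theta\,\tilde h_n\,u_n+(1-\theta)\,\hat h_n\,u_{n-1},
\]
where $\tilde h_n$ and $\hat h_n$ are convex combinations of $\partial_u f$ at intermediate points; in case (1) both are nonnegative and locally bounded, while in case (2) $|\tilde h_n|,|\hat h_n|\le M$. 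Then I would take the pseudo-convolution of the inequality with $\lambda^{-1}R_\lambda$, which has nonnegative entries by complete positivity. Using $R_\lambda\pc A^{(-1)}=\lambda(I-R_\lambda)$ from \eqref{eq:disreldef}, the same rearrangement as in the implicit case gives
\[
u_n\ge u_0\Bigl(1-\sum_{j=1}^n(R_\lambda)_{n-j}^n\Bigr)+R_\lambda\pc u_n+\lambda^{-1}R_\lambda\pc\bigl[\theta \tilde h_{\cdot}u_{\cdot}+(1-\theta)\hat h_{\cdot}u_{\cdot-1}\bigr]_n.
\]

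After moving all $u_n$ contributions to the left and reindexing the shifted convolution, I would collect the coefficient of each $u_k$ and proceed by induction on $n$, assuming $u_j\ge 0$ for $j<n$. From Lemma \ref{lmm:resolventprop} together with \eqref{eq:inverseformula}, one has $(R_\lambda)_0^n=1-\lambda^{-1}c_0^n+O(\lambda^{-2})$ and $(R_\lambda)_j^n=\lambda^{-1}(c_{j-1}^n-c_j^n)+O(\lambda^{-2})$ for $j\ge 1$. Thus, taking $\lambda$ sufficiently large, one checks: the coefficient of $u_n$ on the left equals $\lambda^{-1}(c_0^n-\theta\tilde h_n)+O(\lambda^{-2})$ and is positive under $c_0^n>\theta M$; the coefficient of $u_0$ on the right equals $1-\sum_j(R_\lambda)_{n-j}^n+O(\lambda^{-2})$, which is nonnegative by complete positivity and strictly positive if $C$ has positive entries; and for $1\le k\le n-2$ the coefficient is dominated by the nonnegative leading term $(R_\lambda)_{n-k}^n$, since the $u_{n-1}$-shift perturbs it only at order $\lambda^{-2}$.

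The main obstacle is the delicate coefficient of $u_{n-1}$, where the direct term $(R_\lambda)_1^n$ and the shifted contribution $\lambda^{-1}(R_\lambda)_0^n(1-\theta)\hat h_n$ both appear at the same order $\lambda^{-1}$. Their combination expands to
\[
\lambda^{-1}\bigl[(c_0^n-c_1^n)+(1-\theta)\hat h_n\bigr]+O(\lambda^{-2}).
\]
In case (1), the nonnegativity of $\hat h_n$ together with $c_0^n\ge c_1^n$ from Assumption \ref{ass:cpinverse} makes this automatic. In case (2), the sign cannot be rescued by enlarging $\lambda$, and this is precisely where the sharp balance condition $c_0^n-c_1^n\ge(1-\theta)M$ is needed, ensuring the bracket is nonnegative uniformly in $\hat h_n\in[-M,M]$. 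Once all coefficients have the right sign, the induction closes and gives $u_n\ge 0$; the strict inequality when $C$ has positive entries and $y_0>z_0$ follows from the strictly positive coefficient of $u_0$.
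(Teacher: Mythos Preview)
Your proposal is correct and follows essentially the same route as the paper: set $u_n=y_n-z_n$, linearize the right-hand side to get $\cD_\tau^\alpha u_n\ge \theta h_{n,1}u_n+(1-\theta)h_{n,2}u_{n-1}$, pseudo-convolve with $\lambda^{-1}R_\lambda$, and then read off the signs of the coefficients via the expansion $R_\lambda=I-\lambda^{-1}A^{(-1)}+O(\lambda^{-2})$, with the $u_{n-1}$ coefficient being the delicate one that forces the condition $c_0^n-c_1^n\ge(1-\theta)M$. The only minor refinement the paper adds in case~(2) is that, rather than arguing each coefficient is nonnegative for $\lambda$ large (which can fail when some $-b_{n-j}^n=0$ so that the leading $\lambda^{-1}$ term vanishes and the $O(\lambda^{-2})$ correction could have the wrong sign), it multiplies the whole inequality by $\lambda$ and passes to the limit $\lambda\to\infty$, where the coefficients become exactly $c_0^n-\theta h_{n,1}$, $c_{n-1}^n$, $(c_0^n-c_1^n)+(1-\theta)h_{n,2}$, and $-b_{n-j}^n$ respectively; this cleanly handles the degenerate cases your phrasing glosses over.
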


\begin{proof}
Define $u_n=y_n-z_n$. It holds that
 \[
 \cD_{\tau}^{\alpha}u_n=C_R\pc L^{(-1)}\pc (u-u_0)_n \ge \theta h_{n,1} u_n+(1-\theta)h_{n,2} u_{n-1},
 \]
 In the case of \eqref{eq:ftheta1},
$ h_{n+1,2}=h_{n,1}=\int_0^1 \partial_uf(t_n, \eta y_n+(1-\eta)z_n)\,d\eta$.
 In the case of \eqref{eq:ftheta2},
$h_{n,1}=h_{n,2}=\int_0^1 \partial_uf(t_j^{\theta}, \eta y_n^{\theta}+(1-\eta)z_n^{\theta})\,d\eta$.
 
Taking the pseudo-convolution with $R_{\lambda}$ on left for both sides, one has by the nonnegativity of the elements of $\lambda^{-1}R_{\lambda}$ that
\[
u_n-u_0-R_{\lambda}\pc(u-u_0) \ge \lambda^{-1}R_{\lambda}\pc (\theta h_{n,1}u_n+(1-\theta) h_{n,2}u_{n-1}),
\]
which implies that
\begin{multline}\label{eq:aux1}
(1-(R_{\lambda})_0^n(1+\theta h_{n,1}/\lambda))u_n
\ge \left(1-\sum_{j=1}^n (R_{\lambda})_{n-j}^n+\lambda^{-1}(1-\theta)(R_{\lambda})_{n-1}^n h_{1,2}\right)u_0 \\
+\sum_{j=1}^{n-1}\left((R_{\lambda})_{n-j}^n(1+\theta h_{j,1}/\lambda)+\lambda^{-1}(1-\theta)(R_{\lambda})_{n-j-1}^n h_{j+1,2})\right)u_j.
\end{multline}
The observation is that $u_n$ does not depend on $\lambda$ so for each $n$ one can inductively take $\lambda$ large enough to show $u_n \ge 0$.

If $f(t, \cdot)$ is nondecreasing, then $h_{n,\ell} \ge 0$, $\ell=1,2$ and the coefficients on the right hand side of  \eqref{eq:aux1} are all nonnegative.
By Lemma \ref{lmm:resolventprop} and \eqref{eq:inverseformula}, 
\[
(R_{\lambda})_0^n =1-\lambda^{-1}c_0^n+O(\lambda^{-2}).
\]
Since $|h_{n,\ell}|\le M$, one can see that if $c_0^n >\theta M$, then 
\[
1-(R_{\lambda})_0^n(1+\theta h_{n,1}/\lambda)>0
\]
for $\lambda$ large enough.
This then implies that $u_n \ge 0$ for all $n$ if $u_0\ge 0$.

Now, we focus on the second case.  Due to the same reason as above,
if $c_0^n >\theta M$, then the coefficient of $u_n$ is positive when $\lambda$ is large enough.  
Again, by Lemma \ref{lmm:resolventprop} and \eqref{eq:inverseformula}, one has
\[
\sum_{j=1}^n (R_{\lambda})_{n-j}^n=1-\lambda^{-1}\sum_{j=1}^n b_{n-j}^n
+O(\lambda^{-1})=1-\lambda^{-1}c_{n-1}^n+O(\lambda^{-2}).
\]
Moreover, $(R_{\lambda})_{n-1}^n=-\lambda^{-1}b_{n-1}^n+O(\lambda^{-2})$. Hence, the leading term in the coefficient of $u_0$
is $\lambda^{-1}c_{n-1}^n$ with $c_{n-1}^n\ge 0$.  For the last term in \eqref{eq:aux1}, when $j=n-1$, the coefficient is 
\[
(R_{\lambda})_1^n(1+\theta h_{n-1,1}/\lambda)+\lambda^{-1}(1-\theta)(R_{\lambda})_{0}^n h_{n,2}
=\lambda^{-1}(-b_1^n+(1-\theta)h_{n,2})+O(\lambda^{-2}),
\quad \lambda\to\infty.
\]
If $c_0^n-c_1^n\ge  (1-\theta)M$, then noting $-b_1^n=c_0^n-c_1^n$ and $|h_{n-1}|\le M$, one finds that
the leading term in the coefficient for $j=n-1$ is $\lambda^{-1}(-b_1^n+(1-\theta)h_{n,2})$ with $-b_1^n+(1-\theta)h_{n,2}\ge 0$.
For $j<n-1$, the coefficient is like $(R_{\lambda})_{n-j}^n+O(\lambda^{-2})=\lambda^{-1}(-b_{n-j}^n)+O(\lambda^{-2})$. Since $-b_{n-j}^n=c_{n-j-1}^n-c_{n-j}^n\ge 0$. Then, one can multiply $\lambda$ on both sides to take the limit $\lambda\to\infty$. This then yields that $u_n\ge 0$ by simple induction. 

If $C$ has positive entries,  $c_{n-1}^n>0$. The coefficient of $u_0$ is like $\lambda^{-1}c_{n-1}^n$ when $\lambda$ is large enough. The coefficients in other terms at this order are all nonnegative. This will then naturally yield $u_n>0$ by choosing $\lambda$ sufficiently large.
 \end{proof}

For a typical scheme, since $c_j^n$ is like the average of $g_{1-\alpha}(t_n-s)$ on $(t_{n-j-1}, t_{n-j})$, 
\[
c_0^n\sim \frac{1}{\Gamma(2-\alpha)}\tau_n^{-\alpha},
\quad c_1^n\sim \frac{1}{\Gamma(2-\alpha)}\frac{(\tau_n+\tau_{n-1})^{1-\alpha}-\tau_n^{1-\alpha}}{\tau_{n-1}}.
\]
Hence, when the step sizes are small, the conditions listed above are expected to hold.

Below, we perform a discussion for the Crank-Nicolson scheme.
Consider the so-called $L1^+$ scheme for the derivative at $t_{n-1/2}$:
\begin{gather}
(\cD_{\tau}^{\alpha}u)^{n-1/2}=\frac{1}{\tau_n}\int_{t_{n-1}}^{t_n}
\int_0^t g_{1-\alpha}(t-s)(\Pi_1 u)'(s)\,ds,
\end{gather}
where $\Pi_1 u(s)$ is the piecewise linear approximation of $u$
such that $\Pi_1 u(t_j)=u_j$. Define
\[
\chi_{n-j}^n=\frac{1}{\tau_n \tau_k}\int_{t_{n-1}}^{t_n}
\int_{t_{k-1}}^{\min(t, t_k)}g_{1-\alpha}(t-s)\,ds dt,\quad 1\le j\le n.
\]
Then, the Crank-Nicolson scheme is given by
\begin{gather}
(\cD_{\tau}^{\alpha}u)^{n-1/2}=\sum_{j=1}^n\chi_{n-j}^n\nabla_{\tau}u_j=f_n^{1/2}[u].
\end{gather}

It has been shown in \cite{liao2023discrete} that $\chi_j^n$ is not monotone in $j$ because $\chi_0^n$ could be smaller than $\chi_1^n$.
Only when $\alpha>\alpha_c$ for some critical value $\alpha_c\in (0, 1)$, one has $\chi_0^n>\chi_1^n$. If one defines
\begin{gather}
c_0^n=2\chi_0^n,\quad c_j^n=\chi_j^n, j\ge 1,
\end{gather}
then $C=(c_{n-j}^n)$ is doubly monotone and it satisfies the log-convex condition $c_{j-1}^{n-1}c_{j+1}^n \ge c_j^n c_j^{n-1}$.  The kernel associated with $C$ is completely positive. The Crank-Nicolson scheme can then be written as
 \begin{gather}
 C\pc \nabla_{\tau}u_n= C\pc L^{(-1)}\pc (u-u_0)_n=\chi_0^n(u_n-u_{n-1})
 +f_n^{1/2}[u].
 \end{gather}

\begin{proposition}
If $\alpha>\alpha_c$, the Crank-Nicolson scheme satisfies the following comparison principle. Suppose $f(t,\cdot)$ is uniformly Lipschitz with constant $M$, and two sequences $\{y_n\}$, $\{z_n\}$ satisfy
\[
(\cD_{\tau}^{\alpha}y)^{n-1/2}\ge f_n^{1/2}[y],
\quad (\cD_{\tau}^{\alpha}z)^{n-1/2}\le f_n^{1/2}[z],
\]
with $y_0\ge z_0$. If the discretization satisfies that $\chi_0^n>M/2$ and $\chi_0^n-\chi_1^n\ge M/2$, then
\[
y_n \ge z_n.
\]
If the function is nondecreasing, one only needs $\chi_0^n>M/2$ for the comparison principle to hold.
\end{proposition}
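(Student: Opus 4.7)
The plan is to mimic the resolvent-based argument used in Theorem \ref{thm:cmpweighted}, applied to the modified completely positive kernel $C$ with $c_0^n=2\chi_0^n$ and $c_j^n=\chi_j^n$ for $j\ge 1$. Setting $u_n := y_n - z_n$ and using the rewriting $C\pc\nabla_\tau u_n = \chi_0^n(u_n-u_{n-1}) + f_n^{1/2}[u]$, one first subtracts the two inequalities to obtain
\[
A^{(-1)} \pc (u-u_0)_n \;\ge\; \chi_0^n(u_n-u_{n-1}) + \tfrac{1}{2}\bigl(h_{n,1}u_n + h_{n,2}u_{n-1}\bigr),
\]
where $A=(C\pc L^{(-1)})^{(-1)}$ is completely positive under the assumption $\alpha>\alpha_c$ (as recalled just before the proposition), and $h_{n,\ell}\in[-M,M]$ come from the mean-value theorem applied to $f$.

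Next, I would take the pseudo-convolution with $\lambda^{-1}R_\lambda$ on the left. By the nonnegativity of $R_\lambda$ (Lemma \ref{lmm:resolventprop}) this preserves the inequality, and using $\lambda^{-1}R_\lambda \pc A^{(-1)} = I - R_\lambda$ one arrives at
\[
u_n \ge u_0\Bigl(1-\textstyle\sum_{j=1}^n (R_\lambda)_{n-j}^n\Bigr) + \sum_{j=1}^n (R_\lambda)_{n-j}^n u_j + \lambda^{-1}\sum_{j=1}^n (R_\lambda)_{n-j}^n G_j,
\]
where $G_j := (\chi_0^j + \tfrac{h_{j,1}}{2})u_j + (-\chi_0^j + \tfrac{h_{j,2}}{2})u_{j-1}$. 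Each $u_k$ with $1\le k\le n-1$ thus picks up two contributions — one from $j=k$ and one from $j=k+1$ through the shift.

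The core step is an asymptotic expansion as $\lambda\to\infty$, using $(R_\lambda)_0^n = 1 - \lambda^{-1}b_0^n + O(\lambda^{-2})$ and $(R_\lambda)_k^n = -\lambda^{-1}b_k^n + O(\lambda^{-2})$ for $k\ge 1$, together with $b_0^n = 2\chi_0^n$, $-b_1^n = 2\chi_0^n - \chi_1^n$, and $-b_k^n = \chi_{k-1}^n - \chi_k^n \ge 0$ for $k\ge 2$. After moving $u_n$ to the left, its coefficient becomes $\lambda^{-1}(\chi_0^n - \tfrac{h_{n,1}}{2}) + O(\lambda^{-2})$, positive when $\chi_0^n > M/2$. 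The coefficient of $u_{n-1}$, after cancellation of the $\chi_0^n$ from the shift in $G_n$ against part of the $2\chi_0^n$ in $-b_1^n$, becomes $\lambda^{-1}(\chi_0^n - \chi_1^n + \tfrac{h_{n,2}}{2}) + O(\lambda^{-2})$, nonnegative when $\chi_0^n-\chi_1^n \ge M/2$. For $k<n-1$ the leading part is $\lambda^{-1}(\chi_{n-k-1}^n - \chi_{n-k}^n)\ge 0$ by monotonicity of $\chi_j^n$, and the coefficient of $u_0$ has leading part $\lambda^{-1}\chi_{n-1}^n\ge 0$ via a telescoping computation of $\sum_k b_k^n$. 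When $f$ is nondecreasing, $h_{n,2}\ge 0$ combined with $\chi_0^n>\chi_1^n$ (guaranteed by $\alpha>\alpha_c$) already gives nonnegativity of the $u_{n-1}$ coefficient, so only $\chi_0^n > M/2$ is needed.

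An induction on $n$ finishes: assuming $u_j\ge 0$ for $j<n$, one chooses $\lambda$ large enough to make the leading-order terms dominate the $O(\lambda^{-2})$ corrections, divides through by the positive coefficient of $u_n$, and concludes $u_n\ge 0$. The main obstacle is the bookkeeping of the shifted-index cancellations — in particular tracking how the $-\chi_0^n$ contribution in $G_n$ combines with the $2\chi_0^n - \chi_1^n$ coming from $-b_1^n$ to yield the clean $\chi_0^n-\chi_1^n$ threshold — and verifying that, after collection, every leading-order coefficient has the expected sign.
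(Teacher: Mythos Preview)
Your proposal is correct and follows essentially the same approach as the paper. The paper in fact only sketches this proof, noting that $u_n=y_n-z_n$ satisfies $C\pc L^{(-1)}\pc(u_n-u_0)\ge \chi_0^n(u_n-u_{n-1})+\tfrac{1}{2}(h_{n,1}u_n+h_{n,2}u_{n-1})$ with $C$ completely positive, and then says the argument of Theorem~\ref{thm:cmpweighted} ``can then be carried here with minor modification''; you have written out precisely that modification, including the cancellation yielding the leading coefficients $\chi_0^n - h_{n,1}/2$ and $\chi_0^n-\chi_1^n + h_{n,2}/2$.
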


The argument is almost the same as before.  The sequence $u_n=y_n-z_n$ satisfies 
\[
C\pc L^{(-1)}\pc(u_n-u_0)\ge \chi_0^n(u_n-u_{n-1})+\frac{1}{2}(h_{n,1}u_n+h_{n,2}u_{n-1}),
\]
where $h_{n,\ell}$ ($\ell=1,2$) are the same as in the proof of Theorem \ref{thm:cmpweighted}.  
Note that $C$ corresponds to a completely positive kernel as mentioned, the argument above can then be carried here with minor modification. We thus skip the proof here. 

We remark that some versions of the comparison principles above can also be established using the technique as in the proof of \cite[Theorem 3]{li2019discretization}, using the signs of $A^{(-1)}$. However, such a proof heavily relies on the properties in the discretized scheme and has no analogue for the time continuous version.

\section{Gr\"onwall inequalities for completely positive schemes}\label{sec:gronwall}

In this section, we will establish some Gr\"onwall inequalities. The versions for $f(u)=-\lambda u+c$, $\lambda>0$ could be used for uniform error control and decay estimates for dissipative systems. We will only consider implicit schemes. The Gr\"onwall inequality for the weighted implicit or explicit schemes can be obtained similarly.

We start with some basic facts under the following assumption.
\begin{assumption}\label{ass:gron1}
If there exists $\nu>0$ such that for all $n$ with $t_n \le T$, one has
\begin{gather}\label{eq:clowerbound}
c_{n-j}^n \ge \nu \frac{1}{\tau_j}\int_{t_{j-1}}^{t_j} g_{1-\alpha}(t_n-s)\,ds,\quad \forall 1\le j\le n.
\end{gather}
\end{assumption}
This assumption says that $c_{n-j}^n$ is bounded by from below by a fraction of the average of $g_{1-\alpha}(t_n-\cdot)$ on $(t_{j-1}, t_j]$, which is clearly natural for the approximation of the continuous derivative in \eqref{eq:captra}.

\begin{lemma}\label{lmm:concavecomp}
Under assumption \ref{ass:gron1}, one has for concave function $v(\cdot)$ with $v'(\cdot)\ge 0$ that
\begin{gather}
 \cD_{\tau}^{\alpha} v(t_n) 
 \ge \nu  \frac{1}{\Gamma(1-\alpha)}\int_0^{t_n}(t_n-s)^{-\alpha} v'(s)\,ds
 =\nu D^{\alpha}v(t_n),\quad t_n\le T.
\end{gather}
Moreover, for the $L1$ scheme, $\nu=1$.
\end{lemma}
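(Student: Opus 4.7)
The plan is to bound each term in the discrete sum $\cD_\tau^\alpha v(t_n)=\sum_{j=1}^n c_{n-j}^n \nabla_\tau v(t_j)$ using Assumption~\ref{ass:gron1}, and then recover the continuous Caputo derivative $\nu D^\alpha v(t_n)=\nu\int_0^{t_n}g_{1-\alpha}(t_n-s)v'(s)\,ds$ interval by interval via a monotonicity/Chebyshev argument. Concretely, I would first write
\[
\nabla_\tau v(t_j)=v(t_j)-v(t_{j-1})=\int_{t_{j-1}}^{t_j}v'(s)\,ds,
\]
and then apply the lower bound \eqref{eq:clowerbound} to obtain
\[
\cD_\tau^\alpha v(t_n)\ge \nu\sum_{j=1}^n\frac{1}{\tau_j}\Bigl(\int_{t_{j-1}}^{t_j}g_{1-\alpha}(t_n-s)\,ds\Bigr)\Bigl(\int_{t_{j-1}}^{t_j}v'(s)\,ds\Bigr).
\]

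The key step is to note that on each subinterval $[t_{j-1},t_j]$ the two factors in the product involve functions with opposite monotonicity in $s$: since $g_{1-\alpha}(t)=t^{-\alpha}/\Gamma(1-\alpha)$ is decreasing in $t$, the map $s\mapsto g_{1-\alpha}(t_n-s)$ is nondecreasing in $s$, while $v'(s)$ is nonincreasing because $v$ is concave. Chebyshev's integral inequality for oppositely monotone functions then yields
\[
\Bigl(\int_{t_{j-1}}^{t_j}g_{1-\alpha}(t_n-s)\,ds\Bigr)\Bigl(\int_{t_{j-1}}^{t_j}v'(s)\,ds\Bigr)\ge \tau_j\int_{t_{j-1}}^{t_j}g_{1-\alpha}(t_n-s)v'(s)\,ds,
\]
where both factors are nonnegative (so the product is meaningful and the inequality is not reversed in sign). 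Summing over $j=1,\dots,n$ and telescoping the right-hand side gives exactly $\nu\int_0^{t_n}g_{1-\alpha}(t_n-s)v'(s)\,ds=\nu D^\alpha v(t_n)$, as required.

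For the second assertion, the plan is to simply plug in the L1 coefficients \eqref{eq:L1b}: there $c_{n-j}^n=\frac{1}{\tau_j}\int_{t_{j-1}}^{t_j}g_{1-\alpha}(t_n-s)\,ds$, so \eqref{eq:clowerbound} holds with equality for $\nu=1$, and the chain of inequalities above carries through with $\nu=1$. The only place that needs care is the Chebyshev step — it is crucial that $v'\ge 0$ so that the two factors remain nonnegative (otherwise the standard Chebyshev bound does not immediately give the desired sign), but this is part of the hypothesis. No additional smoothness beyond $v\in C^1$ with nonincreasing nonnegative derivative is needed; I would state this regularity explicitly at the start of the argument.
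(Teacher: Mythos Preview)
Your proposal is correct and follows essentially the same approach as the paper: apply Assumption~\ref{ass:gron1} termwise (using $v'\ge 0$ so that $\nabla_\tau v(t_j)\ge 0$ preserves the inequality), then invoke Chebyshev's integral inequality on each subinterval exploiting that $s\mapsto g_{1-\alpha}(t_n-s)$ is nondecreasing while $v'$ is nonincreasing. The paper also gives a direct two-line verification of the Chebyshev step by symmetrization, but that is just an alternative justification of the same inequality.
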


\begin{proof}
By assumption \ref{ass:gron1}, consider each term in $\cD_{\tau}^{\alpha}v(t_n)$, one has
\[
c_{n-j}^n (v(t_j)-v(t_{j-1}))
\ge \nu \frac{1}{\tau_j}\int_{t_{j-1}}^{t_j}g_{1-\alpha}(t_n-s)\,ds
\int_{t_{j-1}}^{t_j}v'(s)\,ds.
\]
Here, $v'(s)$ is nonincreasing. By Chebyshev's sorting inequality \cite[item 236]{hardy1952inequalities}, one has
\[
\frac{1}{\tau_j}\int_{t_{j-1}}^{t_j}g_{1-\alpha}(t_n-s)\,ds
\int_{t_{j-1}}^{t_j}v'(s)\,ds\ge  \int_{t_{j-1}}^{t_j}g_{1-\alpha}(t_n-s) v'(s)\,ds.
\]
In fact, a direct computation can also verify this:
\begin{multline*}
\int_{t_{j-1}}^{t_j}\int_{t_{j-1}}^{t_j} g_{1-\alpha}(t_n-s)
v'(z)\,dsdz
-\int_{t_{j-1}}^{t_j}\int_{t_{j-1}}^{t_j}g_{1-\alpha}(t_n-s) v'(s)\,ds dz\\
=\int_{t_{j-1}}^{t_j}\int_{t_{j-1}}^{s} [g_{1-\alpha}(t_n-s)-g_{1-\alpha}(t_n-z)]
(v'(z)-v'(s))\,dsdz \ge 0.
\end{multline*}
The last equality above is obtained by changing the order of integration for the second term. 
Summing the estimate above over $j$ and using \eqref{eq:clowerbound}, one then obtains the desired result.
\end{proof}

As a special case, 
\begin{gather}\label{eq:derilower}
 \cD_{\tau}^{\alpha} \left(\frac{1}{\Gamma(1+\alpha)}t_n^{\alpha}\right) \ge  \nu,
 \quad t_n\le T.
\end{gather}

\subsection{A Gr\"onwall inequality for uniform bound}

Next, we will consider $T=\infty$ and suppose Assumption \ref{ass:gron1} holds for all $t_n<\infty$. We  aim to establish a Gr\"onwall inequality that is useful for the uniform bound if the system is dissipative. This could be useful for uniform-in-time error estimate.

\begin{theorem}\label{thm:unibound}
Suppose Assumption \ref{ass:gron1} holds for all $n>0$. Consider an arbitrary completely positive scheme. Suppose a nonnegative sequence satisfies for some $\lambda\ge 0$ that
\[
\cD_{\tau}^{\alpha}v_n \le -\lambda v_n+c.
\]
Then, for $\lambda>0$ and $v_0\le c/\lambda$, one has
\begin{gather}
v_n \le (v_0-c/\lambda)E_{\alpha}(-\nu^{-1}\lambda t_n^{\alpha})+c/\lambda.
\end{gather}
If $\lambda=0$ and $v_0\ge 0$, one has
\[
v_n \le v_0+\nu^{-1}c\frac{1}{\Gamma(1+\alpha)}t_n^{\alpha}.
\]
\end{theorem}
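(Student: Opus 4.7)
The plan is to build a continuous majorant $w(t)$ whose values at the grid points dominate $v_n$, by combining Lemma \ref{lmm:concavecomp} with the comparison principle Theorem \ref{thm:implicitcp}.

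For $\lambda>0$, I would take
\[
w(t) := (v_0 - c/\lambda)\, E_\alpha(-\nu^{-1}\lambda t^\alpha) + c/\lambda,
\]
which is exactly the continuous solution of $D_c^{\alpha} w = -\nu^{-1}\lambda\, w + \nu^{-1}c$ with $w(0)=v_0$. The key structural observation is that $E_\alpha(-\mu t^\alpha)$ is completely monotone for $\mu>0$ (so positive, strictly decreasing, and convex); since the assumption $v_0 \le c/\lambda$ forces the prefactor $(v_0-c/\lambda)$ to be nonpositive, $w$ is automatically nondecreasing and concave on $[0,\infty)$ with $w'\ge 0$. Lemma \ref{lmm:concavecomp} then yields
\[
\cD_{\tau}^{\alpha} w(t_n) \ge \nu D^{\alpha} w(t_n) = \nu\bigl(-\nu^{-1}\lambda w(t_n)+\nu^{-1}c\bigr) = -\lambda w(t_n) + c.
\]
Combined with the hypothesis $\cD_{\tau}^{\alpha} v_n \le -\lambda v_n + c$, this exhibits $w(t_n)$ and $v_n$ as super- and sub-solutions of the scheme with right-hand side $f(t,u) = -\lambda u + c$. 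Since this $f$ is nonincreasing in $u$, case (1) of Theorem \ref{thm:implicitcp} applies with no additional step-size restriction, and gives $v_n \le w(t_n)$, which is the stated bound.

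For $\lambda = 0$, I would use the even simpler choice $w(t) := v_0 + \nu^{-1} c\, t^{\alpha}/\Gamma(1+\alpha)$, which is again nondecreasing and concave, and for which the special case \eqref{eq:derilower} gives $\cD_{\tau}^{\alpha} w(t_n) \ge \nu \cdot \nu^{-1} c = c \ge \cD_{\tau}^{\alpha} v_n$. The corollary to Theorem \ref{thm:implicitcp} (applied to $w(t_n)$ and $v_n$) closes this case. The main obstacle — and the reason the theorem splits its statement into $v_0 \le c/\lambda$ versus $v_0 > c/\lambda$ — is precisely the need to have a concave, monotone comparison function so that Lemma \ref{lmm:concavecomp} applies; in the opposite regime $v_0 > c/\lambda$ the natural $w$ would be convex and decreasing, so Lemma \ref{lmm:concavecomp} cannot be invoked and a different argument (the forthcoming Theorem \ref{thm:decayest}) is needed.
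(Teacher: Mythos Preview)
Your proposal is correct and is essentially the paper's own argument: both define the same auxiliary function $w=y^{\nu}$ solving $D^{\alpha}y^{\nu}=\nu^{-1}(-\lambda y^{\nu}+c)$, use complete monotonicity of $E_{\alpha}(-\mu t^{\alpha})$ together with $v_0\le c/\lambda$ to get concavity and nondecreasing monotonicity, invoke Lemma~\ref{lmm:concavecomp} to obtain $\cD_{\tau}^{\alpha}y^{\nu}(t_n)\ge -\lambda y^{\nu}(t_n)+c$, and then apply case~(1) of Theorem~\ref{thm:implicitcp} (nonincreasing $f$) to conclude. Your treatment of the $\lambda=0$ case via \eqref{eq:derilower} is exactly the ``similar'' case the paper leaves implicit.
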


\begin{proof}

Consider only $\lambda>0$ ($\lambda=0$ is similar). Consider the auxiliary problem
\[
D^{\alpha}y^{\nu}=\nu^{-1}(-\lambda y^{\nu}+c), y^{\nu}(0)=v_0.
\]
If $v_0 \le c/\lambda$,  the solution is concave using the explicit formula 
\eqref{eq:exactsol} and the fact that $t\mapsto E_{\alpha}(-\lambda t^{\alpha})$ is completely monotone.  Then,
\[
\cD_{\tau}^{\alpha}y^{\nu}\ge \nu(\nu^{-1}(-\lambda y^{\nu}+c))
=-\lambda y^{\nu}+c.
\]
Since $v\mapsto -\lambda v+c$ is nonincreasing, the comparison principle in Theorem \ref{thm:implicitcp} holds for any completely positive discretization. Then, it holds that $v_n\le y^{\nu}(t_n)$, which is the desired result.
\end{proof}

This results above hold with no restriction on the step size ratio and the largest step size. It implies that the solution of the implicit scheme $\cD_{\tau}^{\alpha}v_n=-\lambda v_n+c$ for $L1$ scheme is below the exact solution for such case. In fact, if $\alpha=1$ (the ODE case), this is known $v_n=(v_0-c/\lambda)(1+\lambda\tau)^{-n}+c/\lambda \le (v_0-c/\lambda)e^{-\lambda n\tau}+c/\lambda=y(t_n)$.

Often for a dissipative system, one may obtain that the error satisfies
\[
\cD_{\tau}^{\alpha}\|e_n\|^2\le -\lambda \|e_n\|^2+C\tau^{\beta},
\]
for some $\beta>0$. Then, provided that $\|e_0\|=0$, the error is controlled uniformly in time as
\[
\|e_n\|^2\le \frac{C\tau^\beta}{\lambda}(1-E_{\alpha}(-\nu^{-1}\lambda t_n^{\alpha})) \le \frac{C}{\lambda}\tau^{\beta}.
\]

\subsection{A Gr\"onwall inequality for decay estimates}

In this subsection, we consider a Gr\"onwall inequality that can control a sequence above by a decreasing sequence. In particular, consider the discrete inequality
\[
\cD_{\tau}^{\alpha}u_n \le f(u_n),
\]
where $f(u_0)<0$.  We aim to find an upper boud for $u_n$.

We first explain our strategy. Suppose that for a class of nonnegative functions $w(t)$, there is a constant $\rho$ such that
\begin{gather}\label{eq:decayaux1}
c_{n-j}^n\int_{t_{j-1}}^{t_j}w(s)\,ds\le \rho \int_{t_{j-1}}^{t_j}g_{1-\alpha}(t_n-s)w(s)\,ds, \quad
\forall j\le n.
\end{gather}
Then, consider the function $u^{\rho}(\cdot)$ solving
\[
D^{\alpha}u^{\rho}=\rho^{-1}f(u^{\rho}),\quad u^{\rho}(0)= u_0.
\]
Then $t\mapsto u^{\rho}(t)$ is nonincreasing and $w(t)=-\frac{d}{dt}u^{\rho}(t)\ge 0$ for $t>0$. 
If $w(\cdot)$ is in the class such that \eqref{eq:decayaux1} holds, one then has 
\[
\cD_{\tau}^{\alpha}(u^{\rho}(t_n))\ge \rho D^{\alpha}u^{\rho}=f(u^{\rho}).
\]
The comparison principle in Theorem \ref{thm:implicitcp} implies that
$u_n \le u^{\rho}(t_n)$.

To establish \eqref{eq:decayaux1}, we introduce another assumption.
\begin{assumption}\label{ass:cupper}
There exists $\rho_1>0$ such that
\begin{gather}\label{eq:compkernealub}
c_{n-j}^n \le \rho_1 \frac{1}{\tau_j}\int_{t_{j-1}}^{t_j} g_{1-\alpha}(t_n-s)\,ds.
\end{gather}
\end{assumption}
This assumption is a mirror version of Assumption \ref{ass:gron1}, which says that the $c_{n-j}^n$ is bounded above by a multiple of the average of $g_{1-\alpha}(t_n-\cdot)$ on $(t_{j-1}, t_j]$. We emphasize that these two assumptions put no restriction on the step size ratio. In fact, for L1 schemes,  $c_{n-j}^n$ equals exactly to the average so these two assumptions hold. Clearly, for any nonuniform grid, one can define the L1 scheme .

As a consequence of the following simple observation
\begin{gather*}
\frac{1}{\tau_j}\int_{t_{j-1}}^{t_j}(t_n-s)^{-\alpha}\,ds
\le \frac{1}{t_n-t_{j-1}}\int_{t_{j-1}}^{t_n}(t_n-s)^{-\alpha}\,ds  \le \frac{1}{1-\alpha} (t_n-s')^{-\alpha}, \quad \forall s'\in (t_{j-1}, t_j),
\end{gather*}
one has the following:
\begin{lemma}\label{lmm:firstcontrolrho1}
Suppose Assumption \ref{ass:cupper} holds. Then relation \eqref{eq:decayaux1} holds with 
$\rho=\frac{\rho_1}{1-\alpha}$.
\end{lemma}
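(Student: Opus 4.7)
The plan is to combine Assumption \ref{ass:cupper} with the pointwise bound displayed immediately before the lemma statement. Specifically, that display says $\frac{1}{\tau_j}\int_{t_{j-1}}^{t_j}(t_n-s)^{-\alpha}\,ds \le \frac{1}{1-\alpha}(t_n-s')^{-\alpha}$ for every $s'\in(t_{j-1},t_j)$; dividing through by $\Gamma(1-\alpha)$ translates this into the statement that the average of $g_{1-\alpha}(t_n-\cdot)$ on $(t_{j-1},t_j)$ is dominated by $\frac{1}{1-\alpha}\,g_{1-\alpha}(t_n-s')$ pointwise in $s'$.

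Concretely, I would first invoke \eqref{eq:compkernealub} to write
\[
c_{n-j}^n \int_{t_{j-1}}^{t_j} w(s)\,ds \le \rho_1 \left(\frac{1}{\tau_j}\int_{t_{j-1}}^{t_j} g_{1-\alpha}(t_n-s)\,ds\right)\int_{t_{j-1}}^{t_j} w(s')\,ds'.
\]
Then I would substitute the pointwise inequality from the preceding display into the bracketed average and multiply by $w(s')\ge 0$ before integrating in $s'$ over $(t_{j-1},t_j)$. Since $w$ is nonnegative, the inequality is preserved under this weighted integration, yielding
\[
c_{n-j}^n \int_{t_{j-1}}^{t_j} w(s)\,ds \le \frac{\rho_1}{1-\alpha}\int_{t_{j-1}}^{t_j} g_{1-\alpha}(t_n-s')\,w(s')\,ds',
\]
which is exactly \eqref{eq:decayaux1} with $\rho=\rho_1/(1-\alpha)$.

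There is essentially no obstacle: the argument is just the observation that the $L^1$-average of the singular kernel $g_{1-\alpha}(t_n-\cdot)$ on $(t_{j-1},t_j)$ is bounded by its smallest pointwise value times $\frac{1}{1-\alpha}$, so any nonnegative weight $w$ only redistributes the mass without breaking the bound. The only bookkeeping items are carrying the $\Gamma(1-\alpha)$ factor through consistently and using $w\ge 0$ to preserve the inequality direction when integrating.
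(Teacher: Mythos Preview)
Your proposal is correct and follows exactly the approach the paper intends: the lemma is stated as an immediate consequence of the displayed pointwise bound on the kernel average combined with Assumption~\ref{ass:cupper}, and you have simply filled in the one-line integration against the nonnegative weight $w$.
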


The above constant $\rho$ is not satisfactory. In fact, if $\alpha$ is close to $1$, it blows up, which should not happen in practice.  Below, we will consider the special case
\[
f(u)=-\lambda u+c, \quad \lambda>0
\]
and make use of the information of the solution given by \eqref{eq:exactsol}, with $\beta=-\lambda$. Clearly, it suffices to consider 
$y(t)=E_{\alpha}(-\lambda t^{\alpha})$.
 Define 
\begin{gather}\label{eq:w4}
w(t):=-y'(t).
\end{gather}  
Since $y(\cdot)$ is completely monotone, $w$ is also completely monotone.
Our observation is that there could be another constant $\sigma$ such that
\begin{gather}\label{eq:wsigma}
\frac{1}{\tau_j}\int_{t_{j-1}}^{t_j}w(s)\,ds
\le \sigma w(t_j),
\end{gather}
which is good for $\alpha\in [1/2, 1)$. Then, one can take $\rho=\rho_1\min(1/(1-\alpha), \sigma)$ for $\alpha\in [1/2, 1)$.

\begin{lemma}\label{lmm:upperbound}
The function $w(t)$ in \eqref{eq:w4} is a completely monotone function so that
$t\mapsto \frac{w(t+\tau)}{w(t)}$ is nondecreasing for any $\tau>0$.
Consequently, for $\alpha\in [1/2, 1)$ and $\tau_n$ with $\lambda\tau_n^{\alpha}\le 1$, one has for two universal constants $c_1, c_2$ such that
\begin{gather}
\frac{1}{\tau_j}\int_{t_{j-1}}^{t_j}w(s)\,ds \le \frac{c_2}{c_1}\max(2^{1-\alpha}, \alpha^{-1}) w(t_j).
\end{gather}
One can thus take $\sigma=2c_2/c_1>1$ for \eqref{eq:wsigma}.
\end{lemma}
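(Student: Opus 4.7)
\medskip

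\noindent\textbf{Proof plan for Lemma \ref{lmm:upperbound}.}

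The first sentence of the lemma is essentially bookkeeping. Since the paper already records that $y(t)=E_\alpha(-\lambda t^\alpha)$ is completely monotone, its negative derivative $w=-y'$ is CM as well (if $(-1)^n y^{(n)}\ge 0$ for all $n$, then $(-1)^n(-y')^{(n)}=(-1)^{n+1}y^{(n+1)}\ge 0$). Any CM function is log-convex, so $\log w$ is convex, whence the difference $\log w(t+\tau)-\log w(t)=\int_t^{t+\tau}(\log w)'(s)\,ds$ is nondecreasing in $t$ for each fixed $\tau>0$. This is exactly the ratio statement.

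The plan for the integral bound is to first reduce the estimate to the case $t_{j-1}=0$, and then perform an explicit computation using the identities for $E_\alpha$ and $E_{\alpha,\alpha}$. For the reduction, I apply the monotonicity in the form that $s\mapsto w(s+\Delta)/w(s)$ is nondecreasing (equivalently $w(s)/w(s+\Delta)$ is nonincreasing). Taking $\Delta=\tau_j-r$ and comparing $s=r$ with $s=t_{j-1}+r\ge r$ gives
\[
\frac{w(t_{j-1}+r)}{w(t_{j-1}+\tau_j)}\le \frac{w(r)}{w(\tau_j)},\quad r\in[0,\tau_j].
\]
Integrating in $r$ and writing $t_j=t_{j-1}+\tau_j$ yields
\[
\frac{1}{\tau_j}\int_{t_{j-1}}^{t_j}w(s)\,ds\le \frac{w(t_j)}{w(\tau_j)\tau_j}\int_0^{\tau_j}w(r)\,dr,
\]
so it suffices to bound the ratio $\frac{1}{\tau_j w(\tau_j)}\int_0^{\tau_j}w(r)\,dr$ by the advertised constant.

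For the base case, $w=-\frac{d}{dt}E_\alpha(-\lambda t^\alpha)$ gives $\int_0^{\tau_j}w(r)\,dr = 1-E_\alpha(-\lambda\tau_j^\alpha)$, while the identity $E_\alpha'=\alpha^{-1}E_{\alpha,\alpha}$ (recorded in the paper) gives $w(\tau_j)=\lambda\tau_j^{\alpha-1}E_{\alpha,\alpha}(-\lambda\tau_j^\alpha)$. Setting $z:=\lambda\tau_j^\alpha\in (0,1]$, the base ratio becomes
\[
\frac{1-E_\alpha(-z)}{z\,E_{\alpha,\alpha}(-z)}.
\]
From the series expansions near $z=0$, this ratio tends to $\Gamma(\alpha)/\Gamma(\alpha+1)=\alpha^{-1}$; from the Wright asymptotics as $z\to\infty$ it behaves like $(1+z)$ up to constants depending on $\Gamma(-\alpha)$. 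Thus the main work is to establish two-sided envelopes of the form $c_1(1+z)^{-2}\le E_{\alpha,\alpha}(-z)\le c_2(1+z)^{-2}$ and $c_1 z/(1+z)\le 1-E_\alpha(-z)\le c_2 z/(1+z)$ on $[0,\infty)$. These follow from the Hankel-contour representation \eqref{eq:mlfuncintegral} with a careful deformation that separates the residue-type near-origin contribution (controlling the $1/\alpha$ behaviour at small $z$ via $\Gamma(\alpha+1)^{-1}$) from the algebraic tail (controlling the $z^{-2}$ behaviour, which contributes the $2^{1-\alpha}$-type factor). Once these envelopes are in hand, dividing produces $(c_2/c_1)(1+z)$, and since $z\le 1$ one concludes with constant at most $2c_2/c_1$; restricting $\alpha\in[1/2,1)$ ensures both $\alpha^{-1}\le 2$ and $2^{1-\alpha}\le\sqrt{2}$, so the stated maximum absorbs the residual $\alpha$-dependence.

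The routine parts are the log-convexity reduction and the two explicit Mittag-Leffler identities. The real obstacle is the uniform two-sided envelopes for $E_\alpha(-z)$ and $E_{\alpha,\alpha}(-z)$ on $[0,\infty)$: they are standard consequences of the Hankel representation but require keeping careful track of how the constants depend on $\alpha$, which is precisely why the assumption $\alpha\ge 1/2$ enters and why the $\max(2^{1-\alpha},\alpha^{-1})$ factor appears.
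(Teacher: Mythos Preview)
Your reduction to the base interval $[0,\tau_j]$ via log-convexity is correct and elegant, and it is \emph{not} how the paper proceeds. The paper instead splits into two regimes according to the size of $t_j$ relative to $\lambda^{-1/\alpha}$: for small $t_j$ it uses that $w(t)\asymp \alpha\lambda t^{\alpha-1}$ together with the elementary fact that for a nonincreasing function the average over $[t_{j-1},t_j]$ is dominated by the average over $[0,t_j]$, producing the $\alpha^{-1}$ factor; for large $t_j$ it uses the ratio monotonicity pointwise to compare $w(s)/w(t_j)$ with the value at a fixed reference time, producing the $2^{1-\alpha}$ factor. In both cases the only analytic input is the crude two-sided bound $c_1\le \alpha^{-1}E_{\alpha,\alpha}(-z)\le c_2$ for $z\in[0,3]$, obtained by continuity and compactness on $[1/2,1]\times[0,3]$. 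Your route has the advantage of collapsing everything to a single explicit ratio $\dfrac{1-E_\alpha(-z)}{z\,E_{\alpha,\alpha}(-z)}$ on $z\in(0,1]$, which is cleaner.

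There is, however, a gap in your envelope step. The lower bound $E_{\alpha,\alpha}(-z)\ge c_1(1+z)^{-2}$ cannot hold on all of $[0,\infty)$ uniformly for $\alpha\in[1/2,1)$, since as $\alpha\to 1$ one has $E_{\alpha,\alpha}(-z)\to e^{-z}$, which decays exponentially. Fortunately you only need $z=\lambda\tau_j^\alpha\le 1$, and on $[0,1]$ the ratio $\dfrac{1-E_\alpha(-z)}{z\,E_{\alpha,\alpha}(-z)}$ extends continuously to $z=0$ (with limit $\alpha^{-1}$) and is jointly continuous in $(\alpha,z)\in[1/2,1]\times[0,1]$, so a universal bound follows immediately from compactness---no Hankel-contour analysis needed. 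With this fix your argument goes through; the resulting bound is a universal constant, which is compatible with the stated form since $\max(2^{1-\alpha},\alpha^{-1})\le 2$ on $[1/2,1)$.
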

\begin{proof}
Since $y(\cdot)$ is completely monotone, 
$w(t)=-y'(t)$ is completely monotone by definition.
By the discussion in \cite{clement1979abstract,miller1968volterra}, $t\mapsto w(t+\tau)/w(t)$ is nondecreasing. Let $n_0$ be the smallest one such that $\lambda t_{n_0}^{\alpha}\ge 1$. For all $t\ge t_{n_0}$ and $\tau$ with $\lambda \tau^{\alpha}\le 1$, one has $\tau\le t_{n_0}$. Recalling that 
$w(t)=\alpha \lambda t^{\alpha-1}(\alpha^{-1}E_{\alpha, \alpha}(-\lambda t^{\alpha}))$, one  thus has
\[
\frac{w(t)}{w(t+\tau)}\le \frac{w(t_{n_0})}{w(t_{n_0}+\tau)}
=\left(\frac{t_{n_0}}{t_{n_0}+\tau}\right)^{\alpha-1}\frac{\alpha^{-1}E_{\alpha,\alpha}(-\lambda t_{n_0}^{\alpha})}{\alpha^{-1}E_{\alpha,\alpha}(-\lambda (t_{n_0}+\tau)^{\alpha})}.
\]
The right hand side can be bounded easily. In fact, the function (see \cite{mainardi2007probability})
\[
z\mapsto \alpha^{-1}E_{\alpha,\alpha}(-z)
=\sum_{k=0}^{\infty}\frac{(k+1)(-1)^k z^k}{\Gamma((k+1)\alpha+1)}
=E_{\alpha}'(-z)
=\int_0^{\infty}M_{\alpha}(r)re^{-rz}\,dr
\]
 is positive, continuous and nonincreasing in $z>0$ ( $M_{\alpha}(\cdot)$ is nonnegative). 
Moreover, it is continuous in $\alpha$ on $[1/2, 1]$. Hence, there exist $0<c_1<c_2$ such that for all $\alpha\in [1/2, 1)$
and all $z\le 3$, $c_1\le \alpha^{-1}E_{\alpha, \alpha}(z) \le c_2$.
Clearly, $\lambda(t_{n_0-1}+\tau_{n_0}+\tau)^{\alpha}\le 3^{\alpha} \lambda \max(t_{n_0}, \tau_{n_0}, \tau)^{\alpha}\le 3^{\alpha}$. Then, $\lambda (t_{n_0}+\tau)^{\alpha}\in (0, 3]$ and
$ w(t)/w(t+\tau)\le w(t_{n_0})/w(t_{n_0}+\tau) \le 2^{1-\alpha}c_2/c_1$.
Hence, for $j\ge n_0$,
\[
\frac{1}{\tau_j}\int_{t_{j-1}}^{t_j}w(s)\,ds
\le \frac{2^{1-\alpha}c_2}{c_1}w(t_j).
\]

Next, for $j\le n_0-1$, one has
$c_1\lambda\alpha t^{\alpha-1} \le w(t) \le c_2 \lambda \alpha t^{\alpha-1}$.
It follows that
\[
\frac{1}{\tau_j}\int_{t_{j-1}}^{t_j}w(s)\,ds
\le \frac{1}{t_j}\int_{0}^{t_j}w(s)\,ds
\le \frac{c_2\lambda t_j^{\alpha-1}}{\alpha}\le \frac{c_2}{c_1 \alpha}
w(t_j).
\]
Combining these two cases, the conclusion then follows.
\end{proof}

\begin{remark}
The upper bound in $\lambda \tau_n^{\alpha}\le 1$ is not crucial. In fact, for fixed $\gamma>0$, if we restrict $\lambda \tau_n^{\alpha}\le \gamma$, then we can find corresponding constant $\sigma$. 
\end{remark}

One can then establish the following Gr\"onwall inequalities.
\begin{theorem}\label{thm:decayest}
Let $\lambda>0$ and $v_0>c/\lambda$. Consider variable step completely positive discretization and $\{v_n\}$ is a nonnegative sequence.  
\begin{enumerate}[(1)]
\item Suppose Assumption \ref{ass:gron1} holds. If 
$\cD_{\tau}^{\alpha}v_n \ge -\lambda v_n+c$,
then it holds that
\begin{gather}
v_n \ge (v_0-c/\lambda)E_{\alpha}(-\nu^{-1}\lambda t_n^{\alpha})+c/\lambda.
\end{gather}

\item  Suppose Assumption \ref{ass:cupper} holds. If 
$\cD_{\tau}^{\alpha}v_n \le -\lambda v_n+c$,
then it holds that
\begin{gather}
v_n \le \left(v_0-\frac{c}{\lambda}\right)E_{\alpha}\left(-\frac{\lambda(1-\alpha)}{\rho_1}t_n^{\alpha}\right)+\frac{c}{\lambda}.
\end{gather}
If moreover it holds that $\lambda \tau_n^{\alpha}\le 1$,  then for the universal constant $\sigma$, it holds that
\begin{gather}
v_n \le \left(v_0-\frac{c}{\lambda}\right)E_{\alpha}\left(-\frac{\lambda}{ \sigma \rho_1}t_n^{\alpha}\right)+\frac{c}{\lambda}.
\end{gather}
\end{enumerate}
\end{theorem}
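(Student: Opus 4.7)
The plan is to deduce all three bounds from the implicit-scheme comparison principle (Theorem \ref{thm:implicitcp}) applied to a suitably scaled continuous linear FODE. For any $\mu>0$, the function
\[
y_\mu(t) := \left(v_0-\frac{c}{\lambda}\right)E_{\alpha}(-\mu^{-1}\lambda t^{\alpha}) + \frac{c}{\lambda}
\]
solves $D_c^{\alpha} y_\mu = \mu^{-1}(-\lambda y_\mu + c)$ with $y_\mu(0)=v_0$, and I would choose $\mu$ case by case to match the stated rates: $\mu=\nu$ for part (1), $\mu=\rho_1/(1-\alpha)$ for the first estimate of part (2), and $\mu = \sigma\rho_1$ for the refined estimate. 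Since $v_0 > c/\lambda$ and $t\mapsto E_{\alpha}(-\mu^{-1}\lambda t^{\alpha})$ is completely monotone, $y_\mu$ is positive, convex, and strictly decreasing. The driving function $f(u)=-\lambda u + c$ is nonincreasing, so Theorem \ref{thm:implicitcp}(1) applies to any completely positive scheme with no restriction on the step size or the step-size ratio.

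For part (1), the target is $\cD_{\tau}^{\alpha} y_\nu(t_n) \le -\lambda y_\nu(t_n) + c$. Since $-y_\nu$ is concave and nondecreasing, Lemma \ref{lmm:concavecomp} applied to $v=-y_\nu$ gives $\cD_{\tau}^{\alpha} y_\nu(t_n) \le \nu D^{\alpha} y_\nu(t_n) = -\lambda y_\nu(t_n) + c$, and comparison with the hypothesis together with the equal initial data $y_\nu(0)=v_0$ then yields $v_n \ge y_\nu(t_n)$. For part (2) the direction reverses and Lemma \ref{lmm:concavecomp} produces the wrong sign; instead I would introduce $w(t):=-y_\mu'(t)\ge 0$ and use
\[
\cD_{\tau}^{\alpha} y_\mu(t_n) = -\sum_{j=1}^n c_{n-j}^n \int_{t_{j-1}}^{t_j} w(s)\,ds, \qquad D^{\alpha} y_\mu(t_n) = -\int_0^{t_n} g_{1-\alpha}(t_n-s)w(s)\,ds,
\]
to reformulate the target $\cD_{\tau}^{\alpha} y_\mu(t_n) \ge -\lambda y_\mu(t_n)+c = \mu D^{\alpha} y_\mu(t_n)$ as the summed form of the termwise inequality \eqref{eq:decayaux1} with $\rho=\mu$. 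For (2a) this termwise bound is exactly Lemma \ref{lmm:firstcontrolrho1} with $\mu=\rho_1/(1-\alpha)$, and the comparison principle then gives $v_n \le y_\mu(t_n)$.

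For the sharper bound (2b), I would first note that $w$ is, up to a positive constant, of the form $t^{\alpha-1}E_{\alpha,\alpha}(-\mu^{-1}\lambda t^\alpha)$ and is therefore completely monotone, in particular positive and nonincreasing. Combining Assumption \ref{ass:cupper} with Lemma \ref{lmm:upperbound} under the stated step-size condition $\lambda\tau_n^\alpha \le 1$ yields
\[
c_{n-j}^n \int_{t_{j-1}}^{t_j} w(s)\,ds \le \rho_1 \sigma\, w(t_j) \int_{t_{j-1}}^{t_j} g_{1-\alpha}(t_n-s)\,ds \le \rho_1 \sigma \int_{t_{j-1}}^{t_j} g_{1-\alpha}(t_n-s) w(s)\,ds,
\]
where the last step uses $w(s)\ge w(t_j)$ on $[t_{j-1},t_j]$; summing in $j$ produces \eqref{eq:decayaux1} with $\mu=\sigma\rho_1$, and Theorem \ref{thm:implicitcp} closes the argument. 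The main technical hazards I anticipate are (i) keeping sign conventions straight when trading the decreasing $y_\mu$ for the nonnegative density $w=-y_\mu'$, and (ii) verifying that Lemma \ref{lmm:upperbound}, whose proof is phrased for $-\frac{d}{dt}E_\alpha(-\lambda t^\alpha)$, transfers cleanly to the rescaled argument $-\mu^{-1}\lambda t^\alpha$ under the step-size condition (here one uses $\sigma\rho_1\ge 1$ so that $\lambda\tau_n^\alpha \le 1$ implies $\mu^{-1}\lambda\tau_n^\alpha \le 1$); once this bookkeeping is settled, the conclusion in all three cases reduces to a single invocation of the comparison principle.
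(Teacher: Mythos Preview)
Your proposal is correct and follows essentially the same route as the paper: introduce the scaled continuous solution $y_\mu$, compare $\cD_\tau^\alpha y_\mu(t_n)$ with $\mu D^\alpha y_\mu(t_n)$ via Lemma~\ref{lmm:concavecomp} (part (1)), Lemma~\ref{lmm:firstcontrolrho1} (part (2a)), and Lemma~\ref{lmm:upperbound} (part (2b)), and then invoke Theorem~\ref{thm:implicitcp}(1) using that $u\mapsto -\lambda u+c$ is nonincreasing. One small point you should add to your bookkeeping for (2b): Lemma~\ref{lmm:upperbound} is only stated for $\alpha\in[1/2,1)$, and the paper covers $\alpha\le 1/2$ by observing that then $\rho_1/(1-\alpha)\le 2\rho_1<\sigma\rho_1$, so the already-established bound (2a) with $\rho=\rho_1/(1-\alpha)$ is at least as strong and hence the choice $\mu=\sigma\rho_1$ works trivially in that range.
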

\begin{proof}
(1)
Consider the auxiliary equation
\[
D_c^{\alpha}y^{\nu}=\nu^{-1}(-\lambda y^{\nu}+c), y^{\nu}(0)=y_0.
\]
Now that $v_0>c/\lambda$, $y^{\nu}$ is a nonincreasing, convex function. 
Then, one may apply Lemma \ref{lmm:concavecomp} to $-y^{\nu}$ to obtain
\[
\cD_{\tau}^{\alpha}y^{\nu}\le \nu D^{\alpha}y^{\nu}(t_n)
=-\lambda y^{\nu}+c.
\]
The comparison principle in Theorem \ref{thm:implicitcp} then implies
$v_n\ge y^{\nu}(t_n)$, giving the desired result.

(2)  Consider the auxiliary problem
\[
D_c^{\alpha}y^{\rho}=\rho^{-1}(-\lambda y^{\rho}+c), y^{\rho}(0)=y_0.
\]
By Lemma \ref{lmm:firstcontrolrho1}, one can take $\rho=\rho_1/(1-\alpha)$ and then
\[
\cD_{\tau}^{\alpha}y^{\rho}\ge -\lambda y^{\rho}+c.
\]
Then, by Theorem \ref{thm:implicitcp}, one concludes that
$y(t_n)\le y^{\rho}(t_n)$.

If the discretization satisfies $\lambda\tau_n^{\alpha}\le 1$, motivated by Lemma \ref{lmm:firstcontrolrho1} and Lemma \ref{lmm:upperbound}, one can take for all $\alpha\in (0, 1)$ that
\[
\rho=\rho_1\frac{2c_2}{c_1}=\rho_1 \sigma.
\]
Note that for $\alpha\le 1/2$,  $\rho_1/(1-\alpha)< \rho_1\sigma$ so taking $\rho=\rho_1\sigma$ also works for $\alpha\le 1/2$. Now, since $\rho>1$, $\rho^{-1}\lambda \tau_n^{\alpha}\le 1$ holds, Lemma \ref{lmm:upperbound} applying to $y^{\rho}(\cdot)$ then gives
\[
\cD_{\tau}^{\alpha}y^{\rho}(t_n)\ge -\lambda y^{\rho}+c.
\]
The comparison principle in Theorem \ref{thm:implicitcp} then yields the last result.
\end{proof}

\begin{corollary}
If $g=0$, $v_n$ must decay to zero with upper bound as follows
$v_n \le v_0 E_{\alpha}\left(-\frac{\lambda}{\rho}t_n^{\alpha}\right)$,
where $\rho=\rho_1/(1-\alpha)$. If $\lambda \tau_n^{\alpha}\le 1$, one can take $\rho=\rho_1\sigma$.
\end{corollary}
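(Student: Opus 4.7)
The plan is to read this corollary as the direct specialization of Theorem \ref{thm:decayest}(2) to the homogeneous forcing (interpreting the symbol $g$ as the constant $c$ appearing in the parent theorem, i.e.\ $c=0$). Setting $c=0$ in the two conclusions of that theorem gives
\[
v_n \le v_0 E_\alpha\bigl(-\tfrac{\lambda(1-\alpha)}{\rho_1} t_n^\alpha\bigr)
\quad\text{and, under } \lambda\tau_n^\alpha \le 1,\quad
v_n \le v_0 E_\alpha\bigl(-\tfrac{\lambda}{\sigma\rho_1} t_n^\alpha\bigr),
\]
which are precisely the claimed bounds with $\rho = \rho_1/(1-\alpha)$ and $\rho=\rho_1\sigma$ respectively. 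The decay statement $v_n\to 0$ then follows from the standard asymptotic $E_\alpha(-x)\to 0$ as $x\to\infty$ (visible from the integral representation \eqref{eq:mlfuncintegral} and in fact giving algebraic decay $E_\alpha(-x)=O(x^{-1})$), combined with $t_n\to\infty$.

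The only subtlety is that Theorem \ref{thm:decayest}(2) is stated under the \emph{strict} hypothesis $v_0 > c/\lambda$, which here becomes $v_0 > 0$. For $v_0>0$ the application is immediate. For the borderline case $v_0=0$ the claim degenerates to $v_n\le 0$, which combined with the nonnegativity assumption forces $v_n\equiv 0$. I would justify this either by a perturbation argument (apply the result to $\tilde v_n := v_n+\epsilon$: since constant sequences are annihilated by $\cD_{\tau}^{\alpha}$, one has $\cD_{\tau}^{\alpha}\tilde v_n = \cD_{\tau}^{\alpha}v_n \le -\lambda v_n = -\lambda \tilde v_n + \lambda\epsilon$, so Theorem \ref{thm:decayest}(2) with forcing $\lambda\epsilon$ and initial datum $\epsilon+\delta$ for tiny $\delta>0$ applies, and then $\epsilon,\delta\to 0^+$), or by inspecting the proof of Theorem \ref{thm:decayest}(2) directly and noting that the auxiliary continuous solution $y^\rho(t) = v_0 E_\alpha(-\lambda t^\alpha/\rho)$ is completely monotone -- hence convex and nonincreasing -- for every $v_0\ge 0$, so Lemma \ref{lmm:concavecomp} applied to $-y^\rho$ together with Theorem \ref{thm:implicitcp} go through unchanged.

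I do not anticipate any real obstacle: the substantive analysis was already carried out in Theorem \ref{thm:decayest}, and this corollary is essentially a bookkeeping statement that isolates the unforced ($c=0$) case to exhibit the pure Mittag-Leffler decay rate. The only mildly delicate point is the handling of the equality case $v_0=0$, and even that requires only a one-line perturbation or a direct inspection of the previous proof.
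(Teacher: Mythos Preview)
Your reading is correct: the paper states this corollary without proof, treating it as the immediate specialization $c=0$ of Theorem~\ref{thm:decayest}(2), exactly as you propose. One small slip in your side remark on the $v_0=0$ case: the proof of Theorem~\ref{thm:decayest}(2) does \emph{not} go through Lemma~\ref{lmm:concavecomp} (that lemma needs Assumption~\ref{ass:gron1}, whereas part~(2) is stated under Assumption~\ref{ass:cupper} and relies instead on Lemmas~\ref{lmm:firstcontrolrho1} and~\ref{lmm:upperbound}); but this is moot since the corollary inherits the standing hypothesis $v_0>c/\lambda=0$, so the borderline case need not be addressed at all.
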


\begin{remark}
For $L1$ scheme, the solution to the implicit scheme satisfies
\[
\left(v_0-\frac{c}{\lambda} \right)E_{\alpha}(-\lambda t_n^{\alpha})+\frac{c}{\lambda}
\le v_n \le \left(v_0-\frac{c}{\lambda}\right)E_{\alpha}\left(-\frac{\lambda}{\rho}t_n^{\alpha}\right)+\frac{c}{\lambda},
\]
where $\rho_1=1$ so that $\rho=\min(1/(1-\alpha),\sigma)$.
The lower bound holds for any discretization, while the upper bound holds for $\lambda\tau_n^{\alpha}\le 1$. This means that the solution of the implicit scheme is above the exact solution.
\end{remark}

\subsection{A Gr\"onwall inequality for growing linear functions}

We aim to establish a Gr\"onwall inequality for $f(u)=\lambda u+c$. Compared to \cite{liao2019discrete}, we aim to remove the requirement on the step size ratio. The key is again to show for some $\mu\in (0, 1]$ that
$\cD_{\tau}^{\alpha}y\ge \mu D^{\alpha}y$ where $y$ is the solution to the auxiliary equation.
We assume Assumption \ref{ass:gron1} and need
\[
c_{n-j}^n(y(t_j)-y(t_j-1))\ge  \frac{\mu}{\Gamma(1-\alpha)} \int_{t_{j-1}}^{t_j}(t_n-s)^{-\alpha}y'(s)\,ds.
\]
There are two special cases to guarantee this:
\begin{itemize}
\item If $y''(s)\le 0$ for $s\in [t_{j-1}, t_j]$, then under Assumption \ref{ass:gron1}, one can take $\mu=\nu$.
\item If $c_{n-j}^n \ge \mu_1 (t_n-s)^{-\alpha}$
or $\tau_j^{-1}(y(t_j)-y(t_{j-1}))\ge \mu_1 y'(s)$ for $s\in [t_{j-1}, t_j]$, one can then take $\mu=\nu \mu_1$. 
\end{itemize}
The function $y(s)$ is not concave unless $\lambda=0$. It is concave only near $t=0$. In fact, using $y'(s)=\lambda s^{\alpha-1}E_{\alpha,\alpha}(\lambda s^{\alpha})$ and the power series of Mittag-Leffler function in \eqref{eq:powerseriesofml},  one has the following claim.
\begin{lemma}
Let $w(t)=y'(t)$ where $y(t)=E_{\alpha}(\lambda t^{\alpha})$, $\lambda>0$ and $\alpha\in (0, 1)$. Then, there exists $t_*>0$ such that $w$ is decreasing on $(0, t_*)$ while increasing on $(t_*,\infty)$.
\end{lemma}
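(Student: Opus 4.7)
The plan is to decompose $w$ as a strictly convex growing exponential plus a completely monotone correction; strict convexity of $w$ then makes $w'$ strictly increasing and the boundary behavior forces a unique critical point. A first simplification is the scaling $\mu := \lambda^{1/\alpha}$, which lets me write $y(t) = E_\alpha((\mu t)^\alpha)$ and hence $w(t) = \mu W(\mu t)$ with $W(s) := s^{\alpha-1} E_{\alpha,\alpha}(s^\alpha)$ (using $E_\alpha'(z) = \alpha^{-1} E_{\alpha,\alpha}(z)$). It suffices to prove that $W$ has a unique critical point $s_*$ on $(0,\infty)$, with $W$ decreasing on $(0, s_*)$ and increasing on $(s_*, \infty)$.

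For the key representation I would apply the integral formula \eqref{eq:mlfuncintegral} to $E_{\alpha,\alpha}(s^\alpha)$ (with $\beta = \alpha$ and $z = s^\alpha \in R_+$), deforming the contour $\gamma(\epsilon;\delta)$ to the Hankel contour given by the rays $\zeta = r e^{\pm i\pi\alpha}$; this is the extremal allowed choice $\delta = \pi\alpha \le \min(\pi\alpha, \pi)$. On these rays $\zeta^{1/\alpha} = -r^{1/\alpha}$ is real and negative, so $|e^{\zeta^{1/\alpha}}|$ decays. Letting $\epsilon \to 0$ kills the arc contribution (since $|\zeta|^{(1-\alpha)/\alpha}$ is integrable at $0$ for $\alpha < 1$), and a direct calculation of the jump across the branch cut, followed by the substitutions $r = u^\alpha$ and $u = sv$, yields
\[
W(s) = \frac{1}{\alpha} e^s + \phi(s), \qquad \phi(s) = \int_0^\infty e^{-sv} K(v)\, dv,
\]
with
\[
K(v) = \frac{\sin(\pi\alpha)\, v^\alpha}{\pi[(v^\alpha - \cos(\pi\alpha))^2 + \sin^2(\pi\alpha)]}.
\]
Since $\sin(\pi\alpha) > 0$ and the denominator is strictly positive, $K > 0$ on $(0, \infty)$, so $\phi$ is a Laplace transform of a nonnegative function, hence completely monotone; in particular $\phi$ is convex.

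From here the rest is essentially automatic. $W$ is strictly convex as the sum of the strictly convex $\frac{1}{\alpha} e^s$ and the convex $\phi$, so $W'$ is strictly increasing and has at most one zero on $(0,\infty)$. The power series $W(s) = \sum_{k \ge 0} s^{\alpha(k+1)-1}/\Gamma(\alpha(k+1))$ gives $W'(s) \sim (\alpha-1)s^{\alpha-2}/\Gamma(\alpha) \to -\infty$ as $s \to 0^+$, while $W'(s) = \frac{1}{\alpha} e^s + \phi'(s) \to +\infty$ as $s \to \infty$ (since $\phi'(s) = -\int_0^\infty v\, e^{-sv} K(v)\, dv \to 0$). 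By the intermediate value theorem together with strict monotonicity of $W'$, a unique $s_* > 0$ satisfies $W'(s_*) = 0$, with $W' < 0$ before and $W' > 0$ after; taking $t_* := s_*/\mu$ completes the proof. The main obstacle I expect is the rigorous justification of the contour deformation leading to the Laplace representation — verifying the arc contribution vanishes and that the two rays combine into a convergent real integral with a positive kernel; once that is in hand, the convexity-based argument is very short.
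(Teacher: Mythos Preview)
Your argument is correct and genuinely different from the paper's. The paper works purely with the power series: writing $w=F(z)$ with $z=\lambda t^{\alpha}$, it expands $z^{1/\alpha}\lambda^{-1/\alpha}F'(z)=\sum_{k\ge 0}(k+1-\alpha^{-1})z^k/\Gamma(\alpha(k+1))$, splits this into a polynomial $-A(z)$ with nonpositive coefficients (the terms $k\le k_0$) and a tail $B(z)$ with positive coefficients, and then shows by a homogeneity trick that $B-A$ has exactly one positive root: if $A(z_*)=B(z_*)$ and $\theta=z/z_*>1$, then $A(z)\le \theta^{k_0}A(z_*)$ while $B(z)>\theta^{k_0}B(z_*)$.

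Your route via the contour representation yields the stronger structural fact that $W$ is strictly convex on $(0,\infty)$, from which unimodality of $W'$ is immediate; the paper only gets the sign pattern of $W'$. Your decomposition $W(s)=\alpha^{-1}e^s+\phi(s)$ with $\phi$ completely monotone is exactly the one the paper derives (with the same choice $\delta=\pi\alpha$ and $\epsilon\to 0$) in the \emph{next} lemma, so you are not importing any extra machinery---you are just using it one step earlier and to better effect. The paper's series argument, on the other hand, is entirely elementary and avoids any contour manipulation. Two minor remarks: your justification for the vanishing of the arc is slightly misstated (the point is that $|\zeta|^{(1-\alpha)/\alpha}\to 0$, not merely that it is ``integrable''), and for $W'(s)\to+\infty$ it suffices to note that $|\phi'(s)|$ is nonincreasing in $s$ and finite at $s=1$, so it is bounded for $s\ge 1$.
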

\begin{proof}
Since $w(t)=\lambda t^{\alpha-1}E_{\alpha,\alpha}(\lambda t^{\alpha})$, we set $z=\lambda t^{\alpha}$. Then,
\[
w=\lambda^{1/\alpha}z^{(\alpha-1)/\alpha}E_{\alpha, \alpha}(z)=:F(z),
\]
Recalling the power series of $E_{\alpha,\alpha}(z)$ in \eqref{eq:powerseriesofml}, one has
\begin{gather*}
\lambda^{-1/\alpha}F'(z)=(1-\alpha^{-1})z^{-\alpha^{-1}}E_{\alpha, \alpha}(z)
+z^{1-\alpha^{-1}}E_{\alpha,\alpha}'(z)
=z^{-\alpha^{-1}} \sum_{k=0}^{\infty}\frac{(1-\alpha^{-1}+k) z^k}{\Gamma(\alpha (k+1))}.
\end{gather*}
Let $k_0$ is the integer such that $1-\alpha^{-1}+k_0\le 0$ while $1-\alpha^{-1}+k_0+1>0$.
Such $k_0$ exists and $k_0\ge 0$. 
Define
\[
A(z)=\sum_{k=0}^{k_0}\frac{(\alpha^{-1}-k-1) z^k}{\Gamma(\alpha (k+1))},
\quad B(z)=\sum_{k=k_0+1}^{\infty}\frac{(1-\alpha^{-1}+k) z^k}{\Gamma(\alpha (k+1))}.
\]
Let $z_*$ be the first point such that $A(z_*)=B(z_*)$.  It is clear that $z_*$ exists and $z_*>0$. For $z<z_*$, $B(z)-A(z)<0$. For any $z>z_*$. Let $\theta=z/z_*>1$.
Then, it is clear that $A(z)\le \theta^{k_0}A(z_*)$ while $B(z)>\theta^{k_0}B(z_*)$. Then, $B(z)-A(z)
>\theta^{k_0}(B(z_*)-A(z_*))=0$. This means that there is only one point $z_*$ such that $F'(z)$ is zero and thus only one $t_*$ such that $w'(t_*)=0$.
\end{proof}
Then, for $t_n<t_*$, one can use the concavity as in Lemma \ref{lmm:concavecomp}. For large $t_n$, we turn to the second case above. Clearly,
$c_{n-j}^n \ge \mu_1 (t_n-s)^{-\alpha}$ for $s\in [t_{j-1}, t_j]$ cannot hold if $s$ is near $t_n$. Hence, we seek a lower bound for $y'(t)/y'(t+\tau)$.  By the integral representation \eqref{eq:mlfuncintegral}, one finds that
\begin{gather}\label{eq:integralrepcont}
E_{\alpha, \beta}(\lambda t^{\alpha})=
\frac{1}{\alpha}\lambda^{(1-\beta)/\alpha}t^{(1-\beta)}e^{\lambda^{1/\alpha}t}+\frac{1}{2\pi i\alpha}\int_{\gamma(\epsilon;\delta)} \frac{e^{\zeta^{1/\alpha}}\zeta^{(1-\beta)/\alpha}}{\zeta-\lambda t^{\alpha}}d\zeta.
\end{gather}
The first term grows exponentially while the second term goes to zero algebraically 
as $t\to\infty$. This observation gives a way to control $y'(t)/y'(t+\tau)$ for $t$ large. 

\begin{lemma}\label{lmm:lowerbdgrow}
Let $u(t)=E_{\alpha}(\lambda t^{\alpha})$ and $w(t):=u'(t)=\lambda t^{\alpha-1}E_{\alpha, \alpha}(\lambda t^{\alpha})$.  For any $0\le t<t+\tau\le T$ with $\lambda \tau^{\alpha}\le  1$, there is a universal constant $\mu_1 \in (0, 1)$ (independent of all parameters) such that
\[
\frac{1}{\tau}\int_{t}^{t+\tau}(T-s)^{-\alpha}\,ds \int_{t}^{t+\tau}w(s)\,ds\ge \mu_1 \int_{t}^{t+\tau}(T-s)^{-\alpha}w(s)\,ds.
\]
\end{lemma}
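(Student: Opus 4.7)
The plan is to reduce the lemma to an oscillation bound for $w$ on intervals lying in $[t_*,\infty)$, where $t_*$ is the unique minimizer of $w$ identified in the previous lemma. Concretely, I claim that there is a universal $K\ge 1$ such that
\begin{equation*}
\sup_{s\in[t,t+\tau]}w(s)\le K\,\inf_{s\in[t,t+\tau]}w(s) \qquad\text{whenever }t\ge t_*\text{ and }\lambda\tau^{\alpha}\le 1.
\end{equation*}
Granting this bound, I split into three cases. If $t+\tau\le t_*$, then $w$ is nonincreasing while $(T-s)^{-\alpha}$ is nondecreasing, so Chebyshev's sorting inequality applied to oppositely monotone functions gives the lemma immediately with $\mu_1=1$. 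If $t\ge t_*$, the oscillation bound yields $\sup_{[t,t+\tau]}w\le K\bar w$ (with $\bar w=\tau^{-1}\int_t^{t+\tau}w$), and estimating the integrand by its supremum gives $\mu_1=1/K$.

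The remaining case $t<t_*<t+\tau$ requires a decomposition. I would write $w(s)=w(t_*)+[w(s)-w(t_*)]$ with both terms nonnegative, and split
\begin{equation*}
\int_t^{t+\tau}(T-s)^{-\alpha}w\,ds=w(t_*)F+\int_t^{t_*}(T-s)^{-\alpha}[w-w(t_*)]\,ds+\int_{t_*}^{t+\tau}(T-s)^{-\alpha}[w-w(t_*)]\,ds,
\end{equation*}
where $F=\int_t^{t+\tau}(T-s)^{-\alpha}\,ds$. On $[t,t_*]$ the factor $w-w(t_*)$ is nonincreasing, so Chebyshev's sorting inequality for oppositely monotone functions bounds the middle term by $\bar f_1\cdot\int_t^{t_*}[w-w(t_*)]\,ds\le \bar f\int_t^{t+\tau}w\,ds$ (using $\bar f_1\le \bar f$ because $(T-s)^{-\alpha}$ is increasing, where $\bar f_1$ denotes the average of $(T-s)^{-\alpha}$ over $[t,t_*]$). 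On $[t_*,t+\tau]$, the oscillation bound yields $w(s)-w(t_*)\le(K-1)w(t_*)$, so the last term is at most $(K-1)w(t_*)F$. Combining these with $w(t_*)\le \bar w$ yields $\int(T-s)^{-\alpha}w\,ds\le(K+1)\bar f\bar w\tau$, so $\mu_1=1/(K+1)$ suffices.

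Establishing the oscillation bound is therefore the crux. Using $w(s)=\lambda s^{\alpha-1}E_{\alpha,\alpha}(\lambda s^{\alpha})$ one has
\begin{equation*}
\frac{w(t+\tau)}{w(t)}=\Bigl(1+\frac{\tau}{t}\Bigr)^{\alpha-1}\cdot\frac{E_{\alpha,\alpha}(\lambda(t+\tau)^{\alpha})}{E_{\alpha,\alpha}(\lambda t^{\alpha})}.
\end{equation*}
Invoking the integral representation \eqref{eq:integralrepcont} with $\beta=\alpha$, I would split $E_{\alpha,\alpha}(z)=\alpha^{-1}z^{(1-\alpha)/\alpha}e^{z^{1/\alpha}}+H(z)$ with $H$ the Hankel contour piece. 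Setting $z_i=\lambda(t+i\tau)^{\alpha}$ for $i=0,1$, the leading contribution to the ratio gives a factor $e^{z_1^{1/\alpha}-z_0^{1/\alpha}}=e^{\lambda^{1/\alpha}\tau}\le e$ (since $\lambda\tau^{\alpha}\le 1$ implies $\lambda^{1/\alpha}\tau\le 1$), while the prefactors $(1+\tau/t)^{\alpha-1}$ and $(z_1/z_0)^{(1-\alpha)/\alpha}$ cancel exactly. The main obstacle is to control the correction $H(z_i)$ relative to the leading exponential part uniformly for $z_0\ge z_*(\alpha)$ and $\alpha\in(0,1)$: the decay $|e^{\zeta^{1/\alpha}}|=\exp(|\zeta|^{1/\alpha}\cos(\delta/\alpha))$ along the rays of $\gamma(\epsilon;\delta)$ with $\delta\in(\pi\alpha/2,\min(\alpha\pi,\pi))$ yields pointwise bounds on $H$, and uniformity across $\alpha$ will likely require separate asymptotic analyses as $\alpha\to 0^+$ and $\alpha\to 1^-$ combined with continuity on compact subsets.
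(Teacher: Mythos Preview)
Your reduction and three-case split match the paper's proof almost exactly. The paper also reduces to a ratio bound $w(s)/w(s+\tau')\ge\mu_1$ for $s\ge t_*$, then treats the three positions of $[t,t+\tau]$ relative to $t_*$; your additive decomposition in the mixed case is a minor variant of the paper's device of replacing $w$ by the capped function $\tilde w(s)=\min(w(s),w(t_*))$ and applying Chebyshev once, and both arguments yield the same conclusion.

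The genuine gap is precisely the point you flag as the ``main obstacle'' and then leave unresolved: uniform control of $H(z)$ relative to the exponential term for $z\ge z_*(\alpha):=\lambda t_*^{\alpha}$ and all $\alpha\in(0,1)$. The paper's bound on the ratio $I_2/I_1$ is of the form $C/(z^{1/\alpha}e^{z^{1/\alpha}})$, which is useless unless $z^{1/\alpha}$ stays bounded away from $0$. A priori, $z_*(\alpha)$ could go to $0$ as $\alpha\to 0^+$, and neither continuity on compacts nor naive asymptotics rules this out. The paper's resolution is to prove a \emph{lower bound on the transition point}: for a universal constant $M$, one has $\lambda t_*^{\alpha}\ge M^{-\alpha}$, i.e.\ $w$ is still strictly decreasing for $\lambda t^{\alpha}<M^{-\alpha}$. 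This is done by writing $w=F(z)$ with $z=\lambda t^{\alpha}$ and showing $(\log F)'(z)<0$ on $(0,M^{-\alpha})$ via explicit power-series estimates on $F_1:=\alpha^{-1}E_{\alpha,\alpha}$ and $F_1'$ that are uniform in $\alpha\le 1/2$; with this in hand, one only needs the ratio bound for $z\ge M^{-\alpha}$, where $z^{1/\alpha}\ge 1/M$ gives a uniform $I_2/I_1$ bound. This step is the substantive content of the lemma, and your proposal does not supply it.
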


\begin{proof}
Note that $w$ decreases first and then increases.
Let $t_*$ be the transition point. We aim to find $\mu_1\in (0, 1)$ such that when $t_*\le s<s+\tau'$ with $\tau'\le \tau$ such that
\begin{gather}\label{eq:ratioaux}
\frac{w(s)}{w(s+\tau')}\ge \mu_1.
\end{gather}
We first prove the claim by assuming \eqref{eq:ratioaux}. In fact, 
\begin{itemize}
\item If $t+\tau\le t_*$, then $w$ is nonincreasing on this interval and thus,
\[
\frac{1}{\tau}\int_{t}^{t+\tau}(T-s)^{-\alpha}\,ds \int_{t}^{t+\tau}w(s)\,ds\ge  \int_{t}^{t+\tau}(T-s)^{-\alpha}w(s)\,ds.
\]

\item If $t\ge t_*$, then by \eqref{eq:ratioaux}, one has
\[
\frac{1}{\tau}\int_{t}^{t+\tau}(T-s)^{-\alpha}\,ds \int_{t}^{t+\tau}w(s)\,ds 
\ge \mu_1 \int_t^{t+\tau}(T-s)^{-\alpha}w(t+\tau)\,ds.
\]

\item For $t_*\in (t, t+\tau)$, define
\[
\tilde{w}(s)=\begin{cases}
w(s), \quad s\le t_*,\\
w(t_*) \quad s>t_*
\end{cases}
\]
Then, $\tilde{w}$ is nonincreasing so one has by Chebyshev's sorting inequality that
\[
\begin{split}
\frac{1}{\tau}\int_{t}^{t+\tau}(T-s)^{-\alpha}\,ds  \int_{t}^{t+\tau}w(s)\,ds
&\ge \frac{1}{\tau} \int_{t}^{t+\tau}(T-s)^{-\alpha}\,ds  \int_{t}^{t+\tau}\tilde{w}(s)\,ds\\
&\ge  \int_{t}^{t+\tau}(T-s)^{-\alpha}\tilde{w}(s)\,ds.
\end{split}
\]
For $s\ge t_*$, $\tilde{w}(s)=w(t_*)\ge \mu_1 w(s)$. Hence, the claim still holds for this case.
\end{itemize}

Next, we establish \eqref{eq:ratioaux}. Letting $z=\lambda t^{\alpha}$ and taking $\delta=\alpha\pi$ in \eqref{eq:integralrepcont}, for $\alpha\in (0, 1)$ and $\beta=\alpha<1+\alpha$, one can take $\e\to 0$ to have
\[
\begin{split}
E_{\alpha,\alpha}(z)&=\frac{1}{\alpha}z^{(1-\alpha)/\alpha}e^{z^{1/\alpha}}+\frac{1}{\pi \alpha}\int_0^{\infty}r^{(1-\alpha)/\alpha}e^{-r^{1/\alpha}}
\frac{r\sin(\pi(1-\alpha))}{r^2-2rz\cos(\pi \alpha)+z^2}dr \\
&=:I_1+I_2.
\end{split}
\]
Using $\sin(\pi\alpha/2)\ge \alpha$ and  the simple bound $e^{-r^{1/\alpha}}r^{1/\alpha}\le e^{-1}$, we find
\[
I_2 \le \frac{\sin(\pi \alpha)}{\pi \alpha}\int_{0}^{\infty}\frac{e^{-r^{1/\alpha}}r^{1/\alpha}}{(r-z)^2+2z^2\alpha^2}dr  \le \frac{\sin(\pi \alpha)}{\pi \alpha}\frac{\pi}{\sqrt{2}z\alpha e}.
\]
Hence, one has
\begin{gather}\label{eq:aux2}
\frac{I_2(z)}{I_1(z)}
\le \frac{\pi/\sqrt{2}}{z^{1/\alpha}e^{1+z^{1/\alpha}}}.
\end{gather}

Below, we consider two cases.

{\bf Case 1: $\alpha\ge 1/2$}

For $z\ge 1$, one has $I_2(z)/I_1(z)\le \pi/(\sqrt{2}e^2)$.
Decompose 
\[
w(t)=\lambda t^{\alpha-1}I_1(\lambda t^{\alpha})
+\lambda t^{\alpha-1}I_2(\lambda t^{\alpha})=:w_1(t)+w_2(t).
\]
Then, $w_1(t)=\alpha^{-1}\lambda^{1/\alpha}e^{\lambda^{1/\alpha}t}$, and $w_2/w_1\le \frac{\pi}{\sqrt{2}e^2}$, which implies that
\[
\frac{w(t)}{w(t+\tau)}\ge \frac{w_1(t)}{w_1(t+\tau)+w_2(t+\tau)}\ge \frac{1}{1+\frac{\pi}{\sqrt{2}e^2}}\frac{w_1(t)}{w_1(t+\tau)}
\ge  \frac{1}{1+\frac{\pi}{\sqrt{2}e^2}}e^{-1}.
\]

For $z\le 1$, one has
\[
\alpha^{-1}E_{\alpha, \alpha}(z)=\sum_{k=0}^{\infty}\frac{z^k}{\alpha \Gamma((k+1)\alpha)}
=\sum_{k=0}^{\infty}\frac{(k+1)z^k}{\Gamma((k+1)\alpha+1)}.
\] 
Since $4/5<\Gamma(1+\alpha)<1$ and the $\alpha\mapsto \Gamma((k+1)\alpha+1)$ is increasing since $(k+1)\alpha+1\ge 1.5$, one finds that $\alpha^{-1}E_{\alpha, \alpha}(z)$ is uniformly bounded as
\[
1\le \alpha^{-1}E_{\alpha, \alpha}(z) \le 2E_{1/2, 1/2}(1)=:c'.
\]
Then, one finds that $\alpha\lambda t^{\alpha-1}\le w(t)\le \alpha\lambda t^{\alpha-1} c'$.

Combining these two results, if $t_*$ is the transition point for $w$, then for all $t_*\le s\le s+\tau'$ with 
$\lambda (\tau')^{\alpha}\le 1$:
\[
\frac{w(s)}{w(s+\tau')}\ge \frac{1}{c'}\frac{1}{1+\frac{\pi}{\sqrt{2}e^2}}e^{-1}=:\mu_{1,1}.
\]

{\bf Case 2: $\alpha\le 1/2$}

We take $M>1$ to be determined later.  For $z\ge M^{-\alpha}$, one has by \eqref{eq:aux2} that
\[
\frac{I_2(z)}{I_1(z)}\le \frac{M\pi}{\sqrt{2}e^{1+1/M}}.
\]
Using similar argument as above, if $\lambda t^{\alpha} \ge M^{-\alpha}$, one has
\[
\frac{w(t)}{w(t+\tau)}\ge \frac{1}{1+\frac{M\pi}{\sqrt{2}e^{1+1/M}}}e^{-1}=:\mu_{1,2}.
\]

We show that $w$ is monotone for $\lambda t^{\alpha}< M^{-\alpha}$ if $M$ is large so that $\lambda t_*^{\alpha}\ge M^{-\alpha}$. Recall
\[
w(t)=F(\lambda t^{\alpha}), \quad F(z)=\lambda^{1/\alpha}z^{(\alpha-1)/\alpha}E_{\alpha, \alpha}(z).
\]
 Since $dz/dt>0$, one only needs to consider:
\[
\frac{d}{dz}\log F=\frac{\alpha-1}{\alpha z}+\frac{F_1'(z)}{F_1(z)}, 
\quad F_1(z)=\alpha^{-1}E_{\alpha, \alpha}(z).
\]
Noting that $\inf_{a>0}\Gamma(a)>0.8$, one has
\[
F_1'(z)=\sum_{k=1}^{\infty}\frac{(k+1)k z^{k-1}}{\Gamma((k+1)\alpha+1)}
\le \frac{5}{4}\sum_{k=1}^{\infty} (k+1)k z^{k-1}
=\frac{5}{4}\frac{1}{(1-z)^3}.
\]
Moreover, since $\Gamma(a)<1$ for $a\in (1, 2)$, one has
\[
F_1(z)=\sum_{k=0}^{\infty}\frac{(k+1)z^k}{\Gamma((k+1)\alpha+1)}
\ge \sum_{k=0}^{[1/\alpha]}(k+1)z^k
=\frac{d}{dz}\frac{1-z^{[1/\alpha]+2}}{1-z}\ge \frac{d}{dz}\frac{1-z^{1/\alpha}}{1-z}.
\]
Here, we used the fact that $\frac{d}{dz}\frac{z^m}{1-z}=\frac{z^{m-1}[m(1-z)+z]}{(1-z)^2}$ is decreasing in $m$ for $m>1$. In fact, the derivative of $m\mapsto \log(z^{m-1}[m(1-z)+z])$ is negative. These imply that
\[
\frac{F_1'}{F_1}\le \frac{5}{2}\frac{1}{(1-z)(1-(1-z+\alpha z)z^{1/\alpha}/(\alpha z))}.
\]
Consider the expression
\[
A:=(1-z+\alpha z)z^{1/\alpha}/(\alpha z))=(1-z)\frac{z^{1/\alpha}}{\alpha z}+z^{1/\alpha}
<(1-z)\frac{z^{1/\alpha}}{\alpha z}+M^{-1}.
\]
Optimizing over $\alpha\in (0, 1/2)$, one has
\[
\frac{z^{1/\alpha}}{\alpha}
\le\begin{cases}
2z^2, & z\le 1/\sqrt{e},\\
\frac{1}{-\ln(z)e}, & z>1/\sqrt{e}.
\end{cases}
\]
Then, one finds that $A<1/2+1/M$ (note that $e^{-1}(z^{-1}-1)/\ln(1+(z^{-1}-1))$ is monotone in the case $z>1/\sqrt{e}$). Hence,
\[
\frac{F_1'}{F_1}\le \frac{5}{1-2M^{-1}}\frac{1}{1-z}.
\]
Moreover, $1-M^{-\alpha}=\alpha (\ln M)M^{-\xi}>\alpha (\ln M)M^{-\alpha}>\alpha (\ln M) z$.
This then indicates that
\[
\frac{d}{dz}\log F<\frac{\alpha-1+5/(\ln M(1-2M^{-1}))}{\alpha z}, \quad \alpha\le 1/2.
\]
Hence, for $M$ large enough, this is negative. This means that $\lambda t_*^{\alpha}>M^{-\alpha}$. 
Taking $\mu_1=\min(\mu_{1,1}, \mu_{1,2})$ clearly suffices.
\end{proof}

\begin{theorem}\label{thm:grongrow}
Suppose Assumption \ref{ass:gron1} holds. Let $\mu:=\nu \mu_1$ where $\mu_1$ is the constant in Lemma \ref{lmm:lowerbdgrow}.
Assume $\lambda \tau_n^{\alpha}<\min(\mu, \nu/\Gamma(2-\alpha))$.  Suppose a nonnegative sequence $y_n$ satisfies
\[
\cD_{\tau}^{\alpha}y_n \le \lambda y_n+c, \lambda>0,
\]
then one has
\[
y_n \le \left(y_0+\frac{c}{\lambda}\right)E_{\alpha}(\mu^{-1}\lambda t_n^{\alpha})-\frac{c}{\lambda}.
\]
\end{theorem}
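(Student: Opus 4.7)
\medskip
\noindent\textbf{Proof proposal.} The plan is to mimic the continuous Gr\"onwall argument used in the proof of Theorem \ref{thm:decayest}, but with the auxiliary function adapted to the growing case. I would introduce
\[
y^{\mu}(t) := \left(y_0+\frac{c}{\lambda}\right)E_{\alpha}(\mu^{-1}\lambda t^{\alpha})-\frac{c}{\lambda},
\]
which, by the explicit solution formula \eqref{eq:exactsol}, solves $D_c^{\alpha}y^{\mu}=\mu^{-1}(\lambda y^{\mu}+c)$ with $y^{\mu}(0)=y_0$. Since $\mu^{-1}\lambda>0$, the function $y^{\mu}$ is nondecreasing and its derivative $w(t)=(y^{\mu})'(t)=(y_0+c/\lambda)\mu^{-1}\lambda t^{\alpha-1}E_{\alpha,\alpha}(\mu^{-1}\lambda t^{\alpha})$ is exactly (up to a positive constant) of the form treated in Lemma \ref{lmm:lowerbdgrow} with effective parameter $\mu^{-1}\lambda$.

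The core step is to establish $\cD_{\tau}^{\alpha}y^{\mu}(t_n)\ge \lambda y^{\mu}(t_n)+c$, which via the ODE is equivalent to $\cD_{\tau}^{\alpha}y^{\mu}(t_n)\ge \mu D^{\alpha}y^{\mu}(t_n)$. For each interval $[t_{j-1},t_j]$, I would first invoke Assumption \ref{ass:gron1} and the nonnegativity of $w$ to write
\[
c_{n-j}^n(y^{\mu}(t_j)-y^{\mu}(t_{j-1}))\ge \nu\,\frac{1}{\tau_j}\int_{t_{j-1}}^{t_j}g_{1-\alpha}(t_n-s)\,ds\,\int_{t_{j-1}}^{t_j}w(s)\,ds,
\]
and then apply Lemma \ref{lmm:lowerbdgrow} with $T=t_n$ (whose hypothesis $\mu^{-1}\lambda\tau_j^{\alpha}\le 1$ is guaranteed by the assumption $\lambda\tau_n^{\alpha}<\mu$, since $\tau_j\le\tau_n$ is not assumed but the stepsize condition is to be posed uniformly in $j$) to bound the right-hand side below by
\[
\nu\mu_1\int_{t_{j-1}}^{t_j}g_{1-\alpha}(t_n-s)w(s)\,ds=\mu\int_{t_{j-1}}^{t_j}g_{1-\alpha}(t_n-s)(y^{\mu})'(s)\,ds.
\]
Summing over $j=1,\dots,n$ yields $\cD_{\tau}^{\alpha}y^{\mu}(t_n)\ge \mu D^{\alpha}y^{\mu}(t_n)$, as desired. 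The nice feature is that Lemma \ref{lmm:lowerbdgrow} already absorbs the three subcases (interval entirely before $t_*$, entirely after $t_*$, or crossing $t_*$), so I do not need to split cases by hand.

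Finally, with $\cD_{\tau}^{\alpha}y^{\mu}(t_n)\ge \lambda y^{\mu}(t_n)+c$ and $\cD_{\tau}^{\alpha}y_n\le \lambda y_n+c$, I would appeal to the comparison principle Theorem \ref{thm:implicitcp}(2) applied to $f(t,u)=\lambda u+c$, which is Lipschitz with constant $M=\lambda$. The condition $\lambda\tau_n^{\alpha}<\nu/\Gamma(2-\alpha)$ combined with Assumption \ref{ass:gron1} (which yields $c_0^n\ge \nu/(\Gamma(2-\alpha)\tau_n^{\alpha})$) ensures $M<c_0^n$, so the comparison principle applies and gives $y_n\le y^{\mu}(t_n)$, which is the desired bound. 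The main obstacle I anticipate is not the algebraic chain itself but the careful bookkeeping around Lemma \ref{lmm:lowerbdgrow}: checking that its stepsize hypothesis translates correctly after the time-rescaling $\lambda\mapsto\mu^{-1}\lambda$, which is precisely why the theorem's hypothesis reads $\lambda\tau_n^{\alpha}<\mu$ rather than just $\lambda\tau_n^{\alpha}<1$.
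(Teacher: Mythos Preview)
Your proposal is correct and follows essentially the same route as the paper's proof: define the auxiliary solution $y^{\mu}$ of $D_c^{\alpha}y^{\mu}=\mu^{-1}(\lambda y^{\mu}+c)$, combine Assumption~\ref{ass:gron1} with Lemma~\ref{lmm:lowerbdgrow} (applied with effective parameter $\mu^{-1}\lambda$, so that the hypothesis becomes $\lambda\tau_j^{\alpha}<\mu$) to obtain $\cD_{\tau}^{\alpha}y^{\mu}(t_n)\ge \lambda y^{\mu}(t_n)+c$, and then invoke Theorem~\ref{thm:implicitcp}(2) using $c_0^n>\lambda$. The paper's own proof is terser but identical in structure; your write-up simply spells out the intermediate inequality on each subinterval and correctly identifies why both stepsize constraints in the hypothesis are needed.
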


\begin{proof}
Consider the equation
\[
D^{\alpha}y^{\mu}=\mu^{-1}(\lambda y^{\mu}+c), y^{\mu}(0)=y_0.
\]
Note that $\mu^{-1}\lambda \tau_n^{\alpha}<1$. By Lemma \ref{lmm:lowerbdgrow}, one has
\[
\cD_{\tau}^{\alpha}y^{\mu}(t_n)
\ge \lambda y^{\mu}+c.
\]

If $\lambda\tau_n^{\alpha}<\nu/\Gamma(2-\alpha)$, one then has
$c_0^n>M$.  By the comparison principle in Theorem \ref{thm:implicitcp} and formula \eqref{eq:exactsol}, one has $y_n \le y^{\mu}(t_n)$, which is the desired result.
\end{proof}

\section{Applications to dissipative systems}\label{sec:application}

In this section, we consider two examples for the dissipative systems to illustrate how the Gr\"onwall inequalities above can be applied. The first is the standard subdiffusion equation while the second is the time fractional Allen-Cahn equation. 

\subsection{Example 1: subdiffusion equation}

Consider the following subdiffusion equation
\begin{gather}\label{eq:subdiffeqn}
\begin{split}
& D_c^{\alpha}u=\Delta u+f(x),\quad x\in \Omega \\
& u|_{\partial\Omega}=0, \quad u(x,0)=u_0(x).
\end{split}
\end{gather}

Consider the approximation where the time derivative is discretized by the L1 scheme on nonuniform mesh  (i.e., \eqref{eq:L1a}-\eqref{eq:L1b})
and the Laplacian operator is discretized by the centered difference method. Then, one has
\begin{gather}
\cD_{\tau}^{\alpha}u_n=\Delta_h u_n+f(x).
\end{gather}
The truncation error for spatial derivative is clearly $O(h^2)$.
Regarding the truncation error for time, if the solution is assumed to be smooth, the truncation error for $L1$ scheme is $\Delta t^{2-\alpha}$ \cite{lin2007finite}. However, taking into account the singularity near $t=0$, the truncation error reduces to $\Delta t$. Using graded mesh, $t_n=T(n/N)^{r}$ can improve the accuracy \cite{stynes2017error}. Here, we consider the truncation error on a general nonuniform mesh
\[
R_n:=\cD_{\tau}^{\alpha}u(\cdot, t_n)-D_c^{\alpha}u(\cdot, t_n)
=\cD_{\tau}^{\alpha}u(\cdot, t_n)-(\Delta_h u(\cdot, t_n)+f(\cdot)).
\] 
Then, the truncation error is bounded by
\begin{gather}
\|R_n\|_{\ell^2} \le C(\tau+h^{2}),
\end{gather}
where $\tau=\max_i \tau_i$, $C$ is independent of $n$. 

Let $\Omega_h$ be the set of spatial grid points and consider 
\[
\langle u, v\rangle_{\Omega_h}:=\sum_{x\in \Omega_h} u(x)v(x) h^d,\quad \|u\|_{\ell^2}^2:=\langle u, u\rangle_{\Omega_h}.
\]
For the discrete Laplacian, there exists a constant $\kappa>0$ such that for all discrete functions $v$ being zero on $\partial\Omega_h$, one has
\[
-\langle v, \Delta_h v\rangle_{\Omega_h}\ge \kappa \|v\|_{\ell^2}^2. 
\]
We have the following conclusions.
\begin{proposition}\label{pro:subdiffusion}
Let $u_{\infty}$ be the steady solution of \eqref{eq:subdiffeqn} and $u_{\infty}^h$ be the steady solution of the numerical scheme. Then,
if $\kappa \tau_n^{\alpha}\le 1$, one has
\begin{gather}
\|u_n-u_{\infty}^h\|_{\ell^2} \le \|u_0-u_{\infty}^h\|_{\ell^2} E_{\alpha}(-\sigma^{-1}\kappa t_n^{\alpha}).
\end{gather}
Moreover, the error satisfies
\[
\sup_n \|u_n-u(t_n)\|_{\ell^2}\le \frac{C}{\kappa}(\tau+h^2)(1-E_{\alpha}(-\kappa t_n^{\alpha})).
\]
Consequently, $\|u_{\infty}-u_{\infty}^h\|_{\ell^2}\le C(\tau+h^2)$.
\end{proposition}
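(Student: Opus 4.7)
The plan is to establish (a), (b), (c) in order, combining the norm-monotonicity Lemma \ref{lmm:convexfunctional} with the appropriate Gr\"onwall inequality from Section \ref{sec:gronwall}. The L1 discretization used here satisfies both Assumption \ref{ass:gron1} and Assumption \ref{ass:cupper} with $\nu = \rho_1 = 1$, and it is completely positive, so Theorems \ref{thm:unibound} and \ref{thm:decayest}(2) apply directly.

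For (a), I would set $e_n := u_n - u_\infty^h$. Subtracting the steady-state identity from the scheme gives $\cD_\tau^\alpha e_n = \Delta_h e_n$ with homogeneous discrete boundary values. Applying Lemma \ref{lmm:convexfunctional} to $\varphi = \|\cdot\|_{\ell^2}$ and using the coercivity $\langle v, -\Delta_h v\rangle_{\Omega_h} \ge \kappa \|v\|_{\ell^2}^2$ yields
\[
\cD_\tau^\alpha \|e_n\|_{\ell^2} \le -\kappa \|e_n\|_{\ell^2}.
\]
Since $\kappa \tau_n^\alpha \le 1$ by hypothesis, the decay estimate of Theorem \ref{thm:decayest}(2) with $c = 0$, $\lambda = \kappa$, and $\rho_1 = 1$ gives the bound in (a) with the universal constant $\sigma$.

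For (b), define the grid error $e_n := u_n - u(\cdot, t_n)$, so that $e_0 = 0$. Using $\cD_\tau^\alpha u(\cdot, t_n) = \Delta_h u(\cdot, t_n) + f + R_n$ together with the scheme gives $\cD_\tau^\alpha e_n = \Delta_h e_n - R_n$. Lemma \ref{lmm:convexfunctional}, Cauchy--Schwarz, and the coercivity then produce
\[
\cD_\tau^\alpha \|e_n\|_{\ell^2} \le -\kappa \|e_n\|_{\ell^2} + \|R_n\|_{\ell^2} \le -\kappa \|e_n\|_{\ell^2} + C(\tau + h^2).
\]
Since $\|e_0\|_{\ell^2} = 0 \le C(\tau+h^2)/\kappa$, the uniform-bound Gr\"onwall inequality Theorem \ref{thm:unibound}, applied with $\lambda = \kappa$, $\nu = 1$, and $c = C(\tau + h^2)$, yields the stated bound; taking the supremum in $n$ gives the uniform-in-time estimate. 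Part (c) then follows from the triangle inequality
\[
\|u_\infty - u_\infty^h\|_{\ell^2} \le \|u_\infty - u(\cdot, t_n)\|_{\ell^2} + \|u(\cdot, t_n) - u_n\|_{\ell^2} + \|u_n - u_\infty^h\|_{\ell^2}
\]
by letting $n \to \infty$: the first term vanishes by decay of the continuous subdiffusion problem, the third vanishes by (a), and the middle term is bounded uniformly by $C(\tau + h^2)/\kappa$ from (b).

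The only delicate technicality is justifying Lemma \ref{lmm:convexfunctional} at grid points where $e_n = 0$; the standard remedy is to regularize by $\sqrt{\|e_n\|_{\ell^2}^2 + \varepsilon}$ and send $\varepsilon \to 0$, so this is minor. The main conceptual obstacle is choosing the correct Gr\"onwall inequality in each sub-claim: the decay variant Theorem \ref{thm:decayest}(2) when the forcing is zero in (a), versus the uniform-bound variant Theorem \ref{thm:unibound} when a constant forcing $C(\tau + h^2)$ appears in (b). Mixing them would either miss the Mittag-Leffler decay in (a) or lose the sharp $O(\tau + h^2)$ dependence in (b), so the hypothesis $\kappa\tau_n^\alpha \le 1$ is precisely what permits invoking the stronger decay rate in (a).
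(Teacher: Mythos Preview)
Your proposal is correct and follows essentially the same route as the paper: apply Lemma \ref{lmm:convexfunctional} to reduce to a scalar fractional inequality for $\|e_n\|_{\ell^2}$, then invoke Theorem \ref{thm:decayest}(2) for the decay estimate and Theorem \ref{thm:unibound} for the uniform error bound, noting that the L1 scheme has $\nu=\rho_1=1$. Your explicit triangle-inequality argument for the ``consequently'' statement and the remark on the $e_n=0$ case are reasonable additions that the paper leaves implicit.
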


\begin{proof}
Since $u_{\infty}^h$ is a steady solution to the numerical scheme, one has
\[
\cD_{\tau}^{\alpha}(u_n-u_{\infty}^h)=\Delta_h(u_n-u_{\infty}^h).
\]
Then, by Lemma \ref{lmm:convexfunctional},
\[
\cD_{\tau}^{\alpha}\|u_n-u_{\infty}^h\|_{\ell^2}
\le  \left\langle \frac{u_n-u_{\infty}^h}{\|u_n-u_{\infty}^h\|_{\ell^2}},
\cD_{\tau}^{\alpha}(u_n-u_{\infty}^h)  \right\rangle
\le -\kappa \|u_n-u_{\infty}^h\|_{\ell^2}.
\]
Theorem \ref{thm:decayest} then gives the desired result (Assumption \ref{ass:cupper} holds with $\rho_1=1$ for L1 discretization).

By the definition of truncation error,
\[
\cD_{\tau}^{\alpha}u(\cdot, t_n)=\Delta_h u(\cdot, t_n)+f(\cdot)+R_n.
\]
If one defines the error $e_n:=u_n-u(\cdot, t_n)$, one has
\[
\cD_{\tau}^{\alpha}e_n=\Delta_h e_n-R_n.
\]
Pairing with $\frac{e_n}{\|e_n\|}$, one has
\[
\cD_{\tau}^{\alpha}\|e_n\|_{\ell^2}\le -\kappa \|e_n\|_{\ell^2}
+C(\tau+h^2).
\]
Applying Theorem \ref{thm:unibound} (with $v_0=0$ and $\nu=1$) gives the desired control to the error.
\end{proof}

\subsection{Example 2: time fractional Allen-Cahn equation}

We consider the following 1D time fractional Allen-Cahn equation as an example \cite{li2021sharp}. 
\begin{gather}
\begin{split}
&D_c^{\alpha} u=\kappa^2\partial_{xx}u+(u-u^3),\quad x\in \mathbb{T}, \\
& u|_{t=0}=u_0.
\end{split}
\end{gather}
Here, $\mathbb{T}$ is the 1D torus with length $2\pi$ (i.e., $[-\pi, \pi)$ with periodic boundary condition). The equation is associated with a free energy
\[
E(u)=\int_{\T} \frac{\kappa^2}{2} |\partial_x u|^2+\frac{1}{4}(u^2-1)^2\,dx ,
\]
and the equation is actually the time fractional gradient flow of this free energy in $L^2$ 
\[
D_c^{\alpha} u=-\frac{\delta E}{\delta u}.
\]
See \cite{li2019discretization} for some discussion on how one uses the discretization to analyze the behaviors of time fractional gradient flows.

Below, we consider $\kappa>1$ and the discretization with $L1$ scheme:
\begin{gather*}
\cD_{\tau}^{\alpha}u_n=\kappa^2D^2u_n+(u_n-u_n^3).
\end{gather*}
Here, $D^2$ means that the spatial derivative is discretized by the Fourier spectral method. For any $p>0$, there exists $C>0$ such that the truncation error satisfies
\[
r_n:=\|\cD_{\tau}^{\alpha}u(\cdot, t_n)-(\kappa^2\partial_{xx}u_n+(u_n-u_n^3))  \|_{\ell^2}  \le C(\tau+h^p).
\]
The truncation error for the time discretization has been discussed above in the first example. The spatial truncation error is standard for spectral method. For spectral discretization, one has
\[
-\langle v, D^2v\rangle \ge \|v\|_{\ell^2}^2, \quad \forall v\quad \text{with}\quad \hat{v}_0=0.
\]

\begin{proposition}
Suppose that $u_0$ is an odd function on $[-\pi, \pi]$ and $\kappa^2>1$. Assume also that $(\kappa^2-1)\tau_n^{\alpha}\le 1$, then the $L^2$ norm of the numerical solution satisfies
\[
\|u_n\|_{\ell^2}\le C E_{\alpha}(-\sigma^{-1}(\kappa^2-1)t_n^{\alpha}) \sim Ct_n^{-\alpha}, \quad n\to\infty.
\]
The error satisfies
\[
\|u(t_n)-u_n\|_{\ell^2}\le \frac{C(\tau+h^p)}{\kappa^2-1}\left(1-E_{\alpha}(-(\kappa^2-1)t_n^{\alpha})\right).
\]
\end{proposition}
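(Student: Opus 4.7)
The plan is to reduce both bounds to scalar fractional inequalities of the type covered by Theorems \ref{thm:decayest} and \ref{thm:unibound}, by pairing the scheme with $u_n/\|u_n\|_{\ell^2}$ or $e_n/\|e_n\|_{\ell^2}$ via Lemma \ref{lmm:convexfunctional} and then using three ingredients: (i) the spectral Poincar\'e inequality $-\langle v, D^2 v\rangle\ge \|v\|_{\ell^2}^2$ valid for mean-zero $v$; (ii) the sign $\langle u,u^3\rangle_{\ell^2}\ge 0$; and (iii) the algebraic identity $u_n^3-u^3=(u_n-u)(u_n^2+u_n u+u^2)$ together with the pointwise bound $u_n^2+u_n u+u^2=(u_n+u/2)^2+\tfrac{3}{4}u^2\ge 0$. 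A preliminary observation, used in both parts, is that oddness is preserved by every term of the equation (Laplacian, identity and cube map odd to odd) and by the Fourier spectral discretization (odd and even modes decouple), so $u(\cdot,t_n)$, $u_n$, and hence $e_n$ are odd, in particular mean-zero, so (i) is available with constant $1$.

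For the decay estimate, pair the scheme with $u_n/\|u_n\|_{\ell^2}$ through Lemma \ref{lmm:convexfunctional}: using (i) and (ii),
\[
\cD_{\tau}^{\alpha}\|u_n\|_{\ell^2}\le -\kappa^2\|u_n\|_{\ell^2}+\|u_n\|_{\ell^2}-\frac{\langle u_n,u_n^3\rangle}{\|u_n\|_{\ell^2}}\le -(\kappa^2-1)\|u_n\|_{\ell^2}.
\]
For the L1 scheme Assumption \ref{ass:cupper} (and \ref{ass:gron1}) holds with equality, so $\rho_1=\nu=1$; the hypothesis $(\kappa^2-1)\tau_n^{\alpha}\le 1$ is exactly the stepsize condition needed to invoke Theorem \ref{thm:decayest}(2) with $\lambda=\kappa^2-1$, $c=0$, $v_0=\|u_0\|_{\ell^2}$. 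This yields $\|u_n\|_{\ell^2}\le \|u_0\|_{\ell^2}E_{\alpha}(-\sigma^{-1}(\kappa^2-1)t_n^{\alpha})$, and the asymptotic $\sim Ct_n^{-\alpha}$ follows from the standard expansion $E_{\alpha}(-z)\sim 1/(\Gamma(1-\alpha)z)$ as $z\to\infty$.

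For the error bound, let $e_n:=u_n-u(\cdot,t_n)$; subtracting the continuous equation at $t_n$ from the scheme and using the definition of the truncation error $R_n$ gives
\[
\cD_{\tau}^{\alpha}e_n=\kappa^2 D^2 e_n+e_n-(u_n^3-u^3)-R_n.
\]
Pair with $e_n/\|e_n\|_{\ell^2}$ via Lemma \ref{lmm:convexfunctional}; by (iii) the cubic difference contributes $\int e_n^2(u_n^2+u_n u+u^2)\,dx\ge 0$, which can be dropped, and by (i) the Laplacian contributes $\le-\kappa^2\|e_n\|_{\ell^2}$, so
\[
\cD_{\tau}^{\alpha}\|e_n\|_{\ell^2}\le -(\kappa^2-1)\|e_n\|_{\ell^2}+\|R_n\|_{\ell^2}\le -(\kappa^2-1)\|e_n\|_{\ell^2}+C(\tau+h^p).
\]
Since $e_0=0\le C(\tau+h^p)/(\kappa^2-1)$, Theorem \ref{thm:unibound} (with $\nu=1$) gives the stated estimate.

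The main obstacle is coaxing the nonlinearity into a dissipative form: the linear part $+u$ of $u-u^3$ has the wrong sign, and only the structural observation that the discarded cubic defect $\int e_n^2(u_n^2+u_n u+u^2)\,dx$ is pointwise nonnegative allows diffusion to win, which is precisely why $\kappa^2>1$ is the sharp threshold here. The parity-preservation step is a secondary but essential bookkeeping point; it is immediate for the Fourier spectral scheme, whereas a finite-difference analogue would need the discrete Laplacian to respect oddness in order to retain the Poincar\'e constant $1$.
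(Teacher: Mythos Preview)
Your proposal is correct and follows the same approach as the paper: reduce to scalar fractional inequalities via Lemma \ref{lmm:convexfunctional} together with the mean-zero Poincar\'e inequality coming from preserved oddness, then invoke Theorem \ref{thm:decayest}(2) for the decay and Theorem \ref{thm:unibound} for the error, exactly as in Proposition \ref{pro:subdiffusion}. You in fact supply more than the paper does, since the paper omits the error part entirely by pointing to Proposition \ref{pro:subdiffusion}; your handling of the cubic defect via $u_n^3-u^3=e_n(u_n^2+u_nu+u^2)\ge 0$ is the natural way to close that gap.
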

Note that if $u_0$ is an odd function, then the zero Fourier mode of $u_n$
preserves to be zero. Hence, one has
\[
\begin{split}
\left\langle \frac{u_n}{\|u_n\|_{\ell^2}}, \kappa^2 D^2 u_n
+(u_n-u_n^3)\right\rangle  &\le -\kappa^2 \|u_n\|_{\ell^2}+\|u_n\|_{\ell^2}
-\frac{1}{\|u_n\|_{\ell^2}}\|u_n\|_{\ell^4}^4 \\
& \le -(\kappa^2-1)\|u_n\|_{\ell^2}.
\end{split}
\]
The detailed proof would be the same as that for Proposition \ref{pro:subdiffusion} and we omit them.

\section*{Acknowledgement}

This work was financially supported by the National Key R\&D Program of China, Project Number 2021YFA1002800 and 2020YFA0712000. The work of Y. Feng was partially sponsored by NSFC 12301283, Shanghai Sailing program 23YF1410300 and Science and Technology Commission of Shanghai Municipality (No. 22DZ2229014). The work of L. Li was partially supported by NSFC 12371400 and 12031013,  Shanghai Science and Technology Commission (Grant No. 21JC1403700, 20JC144100), the Strategic Priority Research Program of Chinese Academy of Sciences, Grant No. XDA25010403. The work of J.-G. Liu was partially supported by NSF DMS-2106988.
The work of T. Tang was partially supported by Science Challenge Project (Grant No. TZ2018001) and NSFC 11731006 and K20911001.

\bibliographystyle{plain}
\bibliography{frac}

\end{document}